\title{On the action of the Weyl group on canonical bases}
\author{Fern Gossow}
\address{F.~Gossow: School of Mathematics and Statistics, University of Sydney, Australia}
\email{F.Gossow@maths.usyd.edu.au}
\author{Oded Yacobi}
\address{O.~Yacobi: School of Mathematics and Statistics, University of Sydney, Australia}
\email{oded.yacobi@sydney.edu.au}
\date{}
\def\SYT{\mathrm{SYT}}
\def\Res{\mathsf{Res}}
\DeclareMathOperator{\End}{End}
\DeclareMathOperator\sh{sh}
\DeclareMathOperator{\Span}{span}
\def\pr{\mathsf{pr}}
\def\ev{\mathsf{ev}}
\def\nev{\mathsf{nev}}
\def\Z{\mathbb Z}
\def\KL{\mathbb{K}\mathbb{L}}
\def\H{\mathcal{H}}
\def\GL{\mathrm{GL}}
\def\wt{\mathsf{wt}}
\def\SSYT{\mathrm{SSYT}}
\def\SYT{\mathrm{SYT}}
\def\rect{\mathsf{rect}}
\newcommand{\cA}{\mathcal{A}}
\newcommand{\bB}{\mathbb{B}}
\newcommand{\cB}{\mathcal{B}}
\newcommand{\cC}{\mathcal{C}}
\newcommand{\sC}{\mathscr{C}}
\newcommand{\cD}{\mathcal{D}}
\newcommand{\sE}{\mathsf{E}}
\newcommand{\sF}{\mathsf{F}}
\newcommand{\cH}{\mathcal{H}}
\newcommand{\calZ}{\mathcal{Z}}
\newcommand{\cL}{\mathcal{L}}
\newcommand{\rL}{\mathrm{L}}
\newcommand{\cO}{\mathcal{O}}
\newcommand{\cR}{\mathcal{R}}
\newcommand{\cS}{\mathcal{S}}
\newcommand{\cT}{\mathcal{T}}
\newcommand{\cX}{\mathcal{X}}
\newcommand{\fg}{\mathfrak{g}}
\newcommand{\fh}{\mathfrak{h}}
\newcommand{\bk}{\mathbf{k}}
\newcommand{\br}{\mathbf{r}}
\newcommand{\bs}{\mathbf{s}}
\newcommand{\bt}{\mathbf{t}}
\newcommand{\BB}{\mathbb{B}}
\newcommand{\CC}{\mathbb{C}}
\newcommand{\LL}{\mathbb{L}}
\renewcommand{\SS}{\mathbb{S}}
\newcommand{\ZZ}{\mathbb{Z}}
\newcommand{\bla}{\boldsymbol{\lambda}}
\renewcommand{\phi}{\varphi}
\renewcommand{\emptyset}{\varnothing}
\renewcommand{\tilde}[1]{\widetilde{#1}}
\newcommand{\ol}[1]{\overline{#1}}
\newcommand{\ul}[1]{\underline{#1}}
\DeclareMathOperator{\res}{res}
\newcommand{\fsl}{\mathfrak{sl}}
\newcommand{\sone}{\mathsf{1}}
\newcommand{\mmod}{\mathrm{-mod}}
\newcommand{\Pmod}{\mathrm{-pmod}}
\newcommand\into\hookrightarrow
\newcommand\onto\twoheadrightarrow
\newcommand{\Hom}{\operatorname{Hom}}
\newcommand{\nc}{\newcommand}
\nc{\la}{\lambda}
\nc{\Iso}{\mathsf{Iso}}
\nc{\Irr}{\mathsf{Irr}}
\nc{\Id}{\mathrm{Id}}
\DeclareMathOperator\Des{Des}
\numberwithin{equation}{section}
\newtheorem{Theorem}[equation]{Theorem}
\newtheorem{Proposition}[equation]{Proposition}
\newtheorem{Lemma}[equation]{Lemma}
\newtheorem{Corollary}[equation]{Corollary}
\newtheorem{Conjecture}[equation]{Conjecture}
\newtheorem{Definition}[equation]{Definition}
\newtheorem{Question}[equation]{Question}
\newtheorem{Remark}[equation]{Remark}
\theoremstyle{definition}
\newtheorem{Example}[equation]{Example}
\begin{document}

\begin{abstract}
We study representations of simply-laced Weyl groups which are equipped with canonical bases.  Our main result is that for a large class of representations, the separable elements of the Weyl group $W$ act on these canonical bases by bijections up to lower-order terms. Examples of this phenomenon include the action of separable permutations on the Kazhdan--Lusztig basis of irreducible representations for the symmetric group, and the action of separable elements of $W$ on dual canonical bases of weight zero in tensor product representations of a Lie algebra.  Our methods arise from categorical representation theory, and in particular the study of the perversity of Rickard complexes acting on triangulated categories.\\
\smallskip\noindent
{\bf Keywords.} Weyl groups, representation theory, canonical bases, categorification
\end{abstract}

\maketitle

\section{Introduction}

\subsection{Set-up}
Let $I$ be a simply-laced Dynkin diagram of finite type, and $W=W_I$ its associated Weyl group.  In this paper we'll be interested in the following situation: we have a (complex) representation $\pi:W \to \GL(U)$ equipped with a canonical basis $\bB \subset U$,  usually of geometric or categorical origin.

We ask  the following:
\begin{Question}\label{quest:1}
Can we extract interesting symmetries of $\bB$ from the action of $w\in W$?
\end{Question} 
The origins of this question go back to the 1990s, when the longest element $w_0\in W$ was studied in this context: Lusztig investigated its action on the canonical basis elements of weight zero in irreducible representations of the Lie algebra \cite{LusCanBasesArising}, Mathas studied its action on the Kazhdan--Lusztig basis of the Hecke algebra \cite{M}, and Berenstein--Zelevinsky and Stembridge studied the case of the longest permutation acting on the Kazhdan--Lusztig basis of irreducible Specht modules \cite{BZ,S}.  More recently, Rhoades studied the case of a particular Coxeter element in the symmetric group (the ``long cycle'') acting on Specht modules corresponding to rectangular partitions \cite{R}.

The main aims of our work are two-fold.  Firstly, we show that these results are very special cases of a much wider phenomenon: for a large class of elements (the ``separable'' elements of $W$) and for a large class of representations, the action of $w$ encodes interesting symmetries of $\bB$.  These symmetries arise from an underlying crystal structure in the sense of Kashiwara \cite{Kash91}.  Secondly, our methods are rooted in categorical representation theory, and in particular we show how perverse autoequivalences of triangulated categories can be well-suited to study and explain interesting combinatorial phenomena arising in Lie theory.   

\subsection{The symmetric group}\label{sec:symmetric-group}
We consider first the prototypical examples of the kinds of results we have in mind.  Set $I=\mathrm{A}_{n-1}$, i.e. $W=S_n$ is the symmetric group, and for a partition $\lambda \vdash n$ let $S^\lambda$ be the corresponding Specht module. Recall that as we range over all partitions, these give a complete list of the irreducible representations of $S_n$.  

The Kazhdan--Lusztig (KL) basis of the Hecke algebra (cf.\ Appendix \ref{app:KL}) descends to a canonical basis of the representations $S^\lambda$ (cf.\ Section \ref{sect:KL}).  We denote this basis by $\KL_\lambda=\{ C_T \;|\; T \in \SYT(\lambda) \}$, where $\SYT(\lambda)$ is the set of standard Young tableaux of shape $\lambda$.  Then, by Berenstein--Zelevinsky and Stembridge \cite{BZ,S}, for any $\lambda \vdash n$ and any $T \in \SYT(\lambda)$, we have $
w_0\cdot C_T =\pm C_{\ev(T)}$, where $\ev: \SYT(\lambda) \to \SYT(\lambda)$ is the evacuation operator (cf.\ Example \ref{ex:evac}).  Thus  Question \ref{quest:1} has an affirmative answer in the case when $w$ is the longest element acting on the KL basis of a Specht module.

Rhoades  proved a similar result for the ``long cycle'' $c_n=(1,2,\hdots,n) \in S_n$:  in the case when $\lambda$ is rectangular, $c_n$ acts on the KL basis by the promotion operator $\pr$, another well-studied symmetry of  $\SYT(\lambda)$ \cite{R}. Using this, Rhoades established a cyclic sieving phenomenon for the action of promotion on rectangular tableaux.

It's  easy to see  that Rhoades' Theorem is false for non-rectangular partitions.  Nevertheless, we were recently able to show that the theorem holds for all partitions if one allows for a more general framework \cite{GY}.  Indeed, let $\lambda \vdash n$ be any partition, and define a preorder on $\SYT(\la)$ by $R \leq T$ if the box of $R$ containing $n$ is in a higher row than the box of $T$ containing $n$.  We proved that $c_n \cdot C_T=\pm C_{\pr(T)}$ \textit{up to lower-order terms}.  

Upon reviewing these results, it is natural to try to refine Question \ref{quest:1}. For this we define:

\begin{Definition}\label{def:lot}
    Let $U$ be a vector space with basis $\BB$, $w:U \to U$ an operator, $\xi:\BB \to \BB$ a bijection, and $\leq$ a preorder on $\BB$.  Then $w$ acts on $(U,\BB,\leq)$ by $\xi$ \emph{up to lower-order terms (l.o.t.)} if for every $x \in \BB$ there are $a_y \in \ZZ$ such that:
    $$w(x)=\pm \xi(x) +\sum_{y< x} a_y\xi(y),$$
    and the sign of $\xi(x)$ only depends on the equivalence class of $x$ under $\leq$. 
\end{Definition}
In some cases, we will leave the bijection and preorder unspecified and simply say that $w$ acts on $(U,\BB)$ by some bijection up to lower-order terms. This is reasonable since the bijection is uniquely specified by the action of $w$ on $\BB$ (cf.\ Lemma \ref{lem:lot-unique}). There is also a compact description of the above definition in terms of QR decompositions (cf.\ Lemma \ref{lem:QR}).

In light of this definition, we now ask:
\begin{Question}\label{quest:2}
Which permutations $w \in S_n$ act on $(S^\lambda,\KL_\la)$ by some bijection up to l.o.t.? Under the association of $\KL_\la$ with $\SYT(\la)$, how can we describe the bijections which appear in terms of tableaux?
\end{Question}

Our first main result is that a large and well-studied class of permutations, namely the separable permutations, act on the KL basis by bijections up to l.o.t.  

Separable permutations are constructed as follows.
Given  $x \in S_m$, $y\in S_n$ with permutation matrices $[x],[y]$, define $x \oplus y,x \ominus y \in S_{m+n}$ by:
\begin{align*}
[x \oplus y] = \begin{pmatrix}
[x] & 0 \\
0 & [y] 
\end{pmatrix} ,\quad\quad [x \ominus y] = \begin{pmatrix}
0 & [x]  \\
[y] & 0 
\end{pmatrix}.
\end{align*} 
The set $S_n^{\mathrm{sep}}$ of separable permutations is given by those permutations obtainable from $(1) \in S_1$ by repeated applications of $\oplus$ and $\ominus$ \cite{Kitaev}.
In particular $S_n^\text{sep}$ contains both $w_0$ and $c_n$.  Note that $S_n^{\text{sep}}$ can be alternatively defined as the set of permutations avoiding the patterns $2413$ and $3142$, and they are counted by the large Schr\"oder numbers\footnote{Large Schr\"oder numbers appear as sequence A006318 in the OEIS: $1, 2, 6, 22, 90, 394, 1806, 8558, \hdots$}.  

We prove:
\begin{Theorem}\label{thm:1}
Fix $w \in S_n^{\mathrm{sep}}$ and $\la \vdash n$ arbitrary.  Then $w$ acts on $(S^\lambda,\KL_\lambda)$ by a bijection up to lower-order terms. 
\end{Theorem}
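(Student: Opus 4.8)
The plan is to induct on $n$, exploiting the recursive definition of separable permutations; the case $n=1$ is trivial. Given $n\geq 2$ and $w\in S_n^{\mathrm{sep}}$, write $w=x\oplus y$ or $w=x\ominus y$ with $x\in S_m^{\mathrm{sep}}$ and $y\in S_{n-m}^{\mathrm{sep}}$, $1\le m\le n-1$. The $\ominus$ case reduces to the $\oplus$ case via a matrix identity of the form $x\ominus y = w_0\cdot\big((w_0 y)\oplus(w_0 x)\big)$, the $w_0$'s being longest elements of the relevant symmetric groups (with the first summand of size $n-m$): by \cite{BZ,S} the element $w_0\in S_n$ acts on $(S^\lambda,\KL_\lambda)$ by the evacuation bijection \emph{exactly}, so left multiplication by it preserves the property of acting by a bijection up to l.o.t.\ (one transports the preorder along $\ev$), and $w_0 x$, $w_0 y$ again act by bijections up to l.o.t.\ on the smaller Specht modules by the inductive hypothesis together with this same observation. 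Hence the crux is to show that the $\oplus$-construction preserves ``acts by a bijection up to l.o.t.'', i.e.\ to handle $w=x\oplus y\in S_m\times S_{n-m}$ where $x$, resp.\ $y$, acts by a bijection up to l.o.t.\ on every Specht module of $S_m$, resp.\ $S_{n-m}$.

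The key input I would use is the compatibility of the KL basis with restriction to the Young subgroup $S_m\times S_{n-m}$ \emph{up to lower-order terms}. Precisely: $\Res^{S_n}_{S_m\times S_{n-m}}S^\lambda$ admits a filtration $0=M_0\subset M_1\subset\cdots\subset M_r$ by $S_m\times S_{n-m}$-submodules, each $M_j$ being the span of a subset $\BB_j\subset\KL_\lambda$, with $M_j/M_{j-1}\cong S^{\mu_j}\boxtimes S^{\nu_j}$ (the pairs $(\mu_j,\nu_j)$ occurring with Littlewood--Richardson multiplicities), such that the image of $C_T$ in $M_j/M_{j-1}$, for $T\in\BB_j\setminus\BB_{j-1}$, equals $\pm\,C_{T'}\otimes C_{T''}$ modulo terms lower in a product preorder, under a bijection $\BB_j\setminus\BB_{j-1}\cong\SYT(\mu_j)\times\SYT(\nu_j)$ compatible with Littlewood--Richardson branching of tableaux. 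This is an instance of Lusztig's principle that canonical bases of tensor products agree with tensor products of canonical bases modulo lower terms \cite{LusCanBasesArising}, extractable from the cell theory of the Hecke algebra. I would then give $\KL_\lambda$ the ``layered'' preorder: $C_R<C_T$ if $R$ lies in a strictly earlier layer than $T$, refined within each layer by the product of the preorders on $\KL_{\mu_j}$ and $\KL_{\nu_j}$ furnished by the induction.

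With this in place, $w=(x,y)$ preserves each $M_j$ and acts on $M_j/M_{j-1}$ as $x\otimes y$; by the inductive hypothesis, the easy stability of ``bijection up to l.o.t.'' under tensor products (product preorder, multiplied signs), and the unitriangular passage between the $\KL_{\mu_j}\boxtimes\KL_{\nu_j}$-basis and the image of $\BB_j\setminus\BB_{j-1}$, the operator $w$ acts on each graded layer by a bijection up to l.o.t. Lifting back to $S^\lambda$: for $T$ in layer $j$ one obtains $w\cdot C_T=\pm\,C_{\xi(T)}+\sum_{R<T}a_R C_R$, where $\xi$ is assembled from the layerwise bijections, the corrections within layer $j$ come from the l.o.t.\ there, and the corrections in $M_{j-1}=\Span(\BB_{j-1})$ lie below $C_T$ by construction of the preorder; the sign of $C_{\xi(T)}$ depends only on the equivalence class of $T$ since that class sits in a single layer and the in-layer sign is governed by the tensor structure. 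Both stability claims are transparent from the QR-decomposition description in Lemma \ref{lem:QR}. Assembling the layers closes the induction.

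I expect the main obstacle to be the restriction-compatibility statement of the second paragraph --- establishing that the KL basis is compatible with parabolic restriction \emph{up to lower-order terms}, not merely that $\Res S^\lambda$ is cellular --- together with the need to verify that the family of preorders built along the induction is coherent (e.g.\ that the l.o.t.\ from Lusztig's tensor product expansion and the l.o.t.\ in the restriction expansion both run downward in one and the same preorder on each layer). This is precisely the point where one anticipates the paper's categorical machinery --- perverse equivalences and unitriangularity of decomposition data --- to do the real work, and it is likely most efficient to deduce Theorem \ref{thm:1} as a special case of the general theorem rather than to argue it in isolation.
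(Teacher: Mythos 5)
Your strategy is essentially correct, but it is a genuinely different route from the paper's main proof, and it is much closer in spirit to the paper's non-categorical argument in Appendix \ref{app:KL}. The paper obtains Theorem \ref{thm:1} by writing $w=w_{I_1}\cdots w_{I_r}$ for a descending chain of subdiagrams and proving that the composite Rickard complex $\Theta_w$ is a perverse equivalence (Theorem \ref{thm:mainthm}), which yields Theorem \ref{thm:gencomb} and then Theorem \ref{prop:specht-action-2}; that route buys more than existence --- it identifies the bijection as the iterated partial evacuation $\ev_\calZ$, gives the preorder crystal-theoretically, and extends to all simply-laced Weyl groups and tensor product representations. You instead induct on the $\oplus/\ominus$ recursion, reduce $\ominus$ to $\oplus$ via $x\ominus y=w_0\cdot\big((w_0y)\oplus(w_0x)\big)$ and the exact evacuation action of $w_0$ \cite{BZ,S} (this reduction is fine, including your care in demanding of the factors only the l.o.t.\ property rather than separability), and handle $\oplus$ by a KL-compatible filtration of $\Res^{S_n}_{S_m\times S_{n-m}}S^\lambda$ with layers $S^{\mu}\boxtimes S^{\nu}$. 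The one substantive correction concerns your ``key input'': the right source is not Lusztig's tensor-product principle \cite{LusCanBasesArising} but relative Kazhdan--Lusztig cell theory --- Roichman's theorem \cite{roichman} (quoted in the paper as Lemma \ref{lem:KL-restriction}, see also \cite{G}), combined with the factorisation $C_{uv}=C_uC_v$ of the KL basis over the reducible Coxeter system $S_m\times S_{n-m}$ and the identification of its cell modules with outer products of Specht modules. With that citation your input is a known theorem, and in fact it holds on each layer \emph{exactly} (matching coefficients, with no sign or lower-order correction), which removes the only delicate point in your sign bookkeeping: the in-layer sign is then simply the product of the two signs supplied by the inductive hypothesis, hence constant on product equivalence classes. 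This is precisely how Appendix \ref{app:KL} argues, except that it runs the induction along the chain $w_{I_1}\cdots w_{I_r}$ (Proposition \ref{prop:long-element-KL2}, Theorem \ref{thm:lot-KL}, Corollary \ref{cor:kl-action}) rather than along $\oplus/\ominus$; so, contrary to your final expectation, no categorical machinery is needed for Theorem \ref{thm:1} itself --- the perverse-equivalence approach is what supplies the explicit description of the bijection and the generalisations of Theorem \ref{thm:2}, which your argument does not produce.
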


We discuss this theorem in detail in Section \ref{sect:KL}, where in particular we describe the bijection and preorder associated to $w$ through the combinatorics of tableaux. For an example of the new type of results that arise from this, in Section \ref{sec:KLexamples}  we specify an element $g_n \in S_n$ which is the product of long elements for the tower of symmetric groups $S_n \supset S_{n-1} \supset \cdots\supset S_1$. We show that $g_n$ acts on $(S^\lambda,\KL_\lambda)$ up to l.o.t. by an operation we term ``nested evacuation''.   

We  note that non-separable permutations don't appear to encode any permutations of standard Young tableaux (cf. Example \ref{ex:nonseparable}).  We conjecture that in a precise sense Theorem \ref{thm:1} is the best possible solution to Question \ref{quest:2} (cf. Conjecture \ref{conj:nonseparable}).  If true, this gives a new and very different type of characterisation of separable permutations. 

\subsection{Other Weyl groups}\label{sec:other-weyl-groups}
When $W$ is a simply-laced Weyl group, we use categorical methods to study a much broader class of $W$-modules arising from representations of Lie algebras.  To describe this, let $I$ be the Dynkin diagram of $W$, and for a subdiagram $J \subseteq I$, let $W_J \subseteq W$ denote the parabolic subgroup.  Let $w_J \in W_J$ denote the longest element.  We define the set of separable elements  of $W$ by
\[W^\mathrm{sep}:=\{w_{I_1}\cdots w_{I_r}\in W\mid I \supseteq I_1 \supseteq \cdots \supseteq I_r\}.\]
\begin{Remark}
By splitting each $I_j$ into connected components, commuting disjoint factors and cancelling appearances of $w_J^2$, we are able to rewrite separable elements as a product over $w_J$ for $J\in\mathcal{N}$, where $\mathcal{N}$ is a nested set of $I$ as in \emph{\cite[Theorem 7]{GG2}}. Hence, our characterisation of separable elements matches the definition given by Gaetz and Gao \emph{\cite{GG1}}. In particular, when $W=S_n$, then $W^\mathrm{sep}$ is the set of separable permutations.
\end{Remark}

We'll now exhibit the above phenomenon in $W$-modules arising from representations of Lie algebras, such as tensor products of irreducible highest weight modules. 
We prove that separable elements of $W$ act by bijection up to lower-order terms on the canonical bases of weight zero.  Moreover, the canonical bases of such representations have a natural crystal structure, and the bijection and preorder have an explicit crystal-theoretic description.

Let $\fg$ be the complex semisimple Lie algebra with diagram $I$, and let $X^+$ be the set of dominant integral weights.  For $\lambda \in X^+$ we let $L(\lambda)$ be the irreducible representation of $\fg$ with highest weight $\lambda$.  Given a list of dominant weights $\ul{\la}=(\la_1,\hdots,\la_n)$, let $L(\ul{\la})=L(\lambda_1)\otimes \cdots \otimes L(\lambda_n)$ be the tensor product representation.

Tensor product representations $L(\ul{\la})$ carry various canonical bases, constructed originally by Lusztig \cite{LusTensProd}, which possess amazing positivity properties.  In this introduction we'll focus on the dual canonical basis, but similar results hold also for the canonical basis (cf. Section \ref{sec:dualbasis}).

Recall that there is a natural action of $W$ on the zero weight space $L(\ul{\la})_0$.  Let $\BB(\ul{\la}) \subset L(\ul{\la})_0$ denote the set of dual canonical basis elements of weight zero.  

\begin{Theorem}\label{thm:2}
Let $w \in W^{\mathrm{sep}}$ and let $\ul{\la}$ be a sequence of dominant weights. Then $w$ acts on $(L(\ul\la)_0,\BB(\ul{\la}))$ by a bijection up to lower-order terms.  
\end{Theorem}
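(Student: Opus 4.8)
The plan is to realise the $W$-action categorically and then invoke the perversity of Rickard complexes. First I would fix a categorical $\fg$-module $\cC$ categorifying the tensor product $L(\ul\la)$ — for instance finite-dimensional modules over Webster's tensor product algebra, or a suitable parabolic category $\cO$ in type $A$ — chosen so that the (integral) Grothendieck group $[\cC]$ is identified with $L(\ul\la)$, the classes of simple objects are identified with the dual canonical basis compatibly with the weight grading, and the crystal structure on $\BB(\ul\la)$ is the one induced by the categorical divided-power functors. On $D^b(\cC)$ the braid group of $W$ acts by Rickard complexes $\Theta_i$, $i \in I$, and I would use the known fact that for the longest element $w_J$ of a standard parabolic $W_J$ the complex $\Theta_{w_J}$ decategorifies, on each weight space, to the operator by which $w_J$ acts in the standard $W$-representation on $L(\ul\la)$; in particular on $L(\ul\la)_0$ one recovers $\pi(w_J)$, hence $\pi(w)$ for any separable $w$.

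The second step is to reduce Theorem \ref{thm:2} to a statement about perversity. By the work of Chuang--Rouquier and Rouquier's theory of perverse equivalences, $\Theta_{w_J}$ is a \emph{perverse} autoequivalence of $D^b(\cC)$ with respect to a filtration $\cC = \cC^{(0)} \supseteq \cC^{(1)} \supseteq \cdots$ by Serre subcategories built from the $\fg_J$-categorical action (essentially recording the ``$\fsl_2$-depth'' of a simple along the simple roots in $J$). Unwinding the definition of a perverse equivalence, for every simple $S$ one gets $[\Theta_{w_J}(S)] = \pm[S'] + \sum a_T [T]$ with $S'$ simple and each $T$ in a strictly deeper filtration piece; the assignment $S \mapsto S'$ is the categorified Lusztig/Sch\"utzenberger involution of the $\fg_J$-crystal, and the sign is dictated by the perversity datum and so is constant on filtration pieces. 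Thus $\Theta_{w_J}$ acts on $([\cC],\{[S]\},\leq_J)$ by a bijection up to lower-order terms in the sense of Definition \ref{def:lot}, where $\leq_J$ is the preorder underlying the filtration; restricting to weight zero proves the theorem for a single $w_J$.

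It remains to handle a general separable element $w = w_{I_1}\cdots w_{I_r}$ with $I \supseteq I_1 \supseteq \cdots \supseteq I_r$. Since a composition of perverse autoequivalences need not be perverse, the heart of the argument is to show that the nesting hypothesis makes the relevant filtrations compatible: because $I_{j+1}\subseteq I_j$, the $\fg_{I_{j+1}}$-categorical structure sits inside the $\fg_{I_j}$-one, and one expects a single filtration of $\cC$ (indexed by a suitable poset) with respect to which every $\Theta_{w_{I_j}}$ in the chain is perverse. Granting this, $\Theta_w = \Theta_{w_{I_1}}\circ\cdots\circ\Theta_{w_{I_r}}$ is perverse with respect to that filtration, hence acts on $[\cC]$ by a signed bijection up to l.o.t.; the bijection is the composite $\sigma_{I_1}\circ\cdots\circ\sigma_{I_r}$ of the Levi-crystal involutions, and the preorder is the one coming from the common filtration. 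Restricting to the zero weight space yields Theorem \ref{thm:2} together with the promised crystal-theoretic description.

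The main obstacle is precisely this compatibility claim: showing that, under the nesting hypothesis, the Rickard complex $\Theta_{w_{I_{j+1}}}$ of the inner parabolic respects the perverse filtration attached to $\Theta_{w_{I_j}}$, so that the composite stays perverse. This requires a uniform description of how the Rickard complex of a parabolic longest element interacts with the categorical action of a sub-diagram, and careful bookkeeping of filtration indices and perversity shifts as one descends the chain $I_1 \supseteq \cdots \supseteq I_r$. A secondary point, needed to match Definition \ref{def:lot} exactly, is to check that the sign attached to the leading term depends only on the class of $x$ under the final preorder; this should follow once the perversity data at each stage are identified with the crystal-theoretic data.
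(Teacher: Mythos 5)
Your outline follows the paper's route: categorify $L(\ul\la)$ via Webster's tensor product (KLRW) algebras so that simples correspond to the dual canonical basis, invoke the perversity of $\Theta_{w_J}$ for a single parabolic longest element (Theorem \ref{thm:HLLY2}/Corollary \ref{cor:HLLY}), and then compose along the chain $I_1\supseteq\cdots\supseteq I_r$, decategorifying via Lemma \ref{lem:lot-general} to get Definition \ref{def:lot}. However, the step you defer as "the main obstacle" is precisely the mathematical content of the paper (Section \ref{sec:pervysepy}), and the mechanism you propose for it would fail. There is in general no single filtration of $\cC_0$ with respect to which every $\Theta_{w_{I_j}}$ is perverse with the same filtration on source and target: the bijection that $\Theta_{w_{I_1}}$ induces on the subquotients of the coarse $I_1$-isotypic filtration is the Sch\"utzenberger involution $\xi_{I_1}$, which intertwines the Kashiwara operators indexed by $i\in I_2$ with those indexed by $\theta_{I_1}(i)$; since $\theta_{I_1}(I_2)\neq I_2$ in general, $\xi_{I_1}$ does not preserve $I_2$-isotypic components, so the finer pieces of the $\calZ$-filtration are \emph{not} carried to themselves by $\Theta_{w_{I_1}}$. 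Hence "each factor is perverse for a common filtration, therefore so is the composite" cannot be the argument.

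What the paper does instead is build the asymmetry into the perversity data: the codomain filtration is the twist $\cA'(\bullet)=\big<\xi_{I_1}\cdots\xi_{I_r}(\bB(\bullet))\big>$ of the domain one, and Theorem \ref{thm:mainthm} is proved by induction on $r$, writing $w=w_{I_1}u$ with $\Theta_u$ perverse for $(\cA(\bullet),\cA^u(\bullet),p^u)$. The technical heart is Proposition \ref{prop:prep} (with Lemma \ref{lem:Cprime}): if $F$ preserves $D^b_\cB$ and $D^b_\cD$ and is t-exact up to shift on the quotient, then $F(D^b_{\cC}(\cA))=D^b_{\cC'}(\cA)$, where $\cC'$ is generated by $\cD$ and the images, under the induced bijection on simples, of the simples of $\cC$ not in $\cD$. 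Applied with $F=\Theta_{I_1}$ to the chain $\cA(i_{\ell-1})\subset\cA^u(j)\subset\cA(i_\ell)$, this shows the outer Rickard complex transports the intermediate (inner-parabolic) Serre subcategories to the correct twisted targets; t-exactness on the finer subquotients then follows by restricting the t-exact equivalence on the coarse $I_1$-subquotient, and the shifts add, $p(j)=\mathrm{ht}(\lambda^j_1)+p^u(j)$, which also gives the sign constancy you wanted. You would additionally need the existence of a $\calZ$-filtration compatible with the chain (normal orderings; Lemma \ref{lem:ordering-to-filtration}) even to state the perversity data. So the skeleton is right, but the compatibility claim is not bookkeeping: it requires the twisted-codomain formulation and Proposition \ref{prop:prep}, not a common invariant filtration.
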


We discuss this in detail in Section \ref{sect:tensor-product}.  The bijection and preorder are defined using the  crystal structure on the set of dual canonical basis elements.  We emphasise that as representations of $W$, the spaces $L(\ul{\la})_0$ are very mysterious in general, even in type A  and for $n=1$.

\subsection{Methodology} 

Our methods come from categorical representation theory.  
The starting point is work of Khovanov--Lauda and Rouquier \cite{KL,R} who developed the foundations of this theory for $U(\fg)$ and the quantum group $U_q(\fg)$.  

The definition of a categorical representation is recalled in Section \ref{sec:catrepth}.  For the present discussion, the salient points are as follows.  We will consider representations of $U(\fg)$ on an abelian category $\cC$.  By definition, the category is a direct sum of full subcategories  $\cC_\mu$ indexed by weights, just as a finite dimensional representation is a direct sum of weight spaces.  From the categorical structure, the complexified Grothendieck group $V=[\cC]_\CC$ is  endowed with an action of $U(\fg)$, and the classes of simple objects in $\cC$ descend to a distinguished basis of $V$.  In the cases of interest to us, this is the dual canonical basis.

Chuang and Rouquier constructed the Rickard complex, which is a functor of bounded derived categories: $\Theta_i:D^b(\cC_\mu) \to D^b(\cC_{s_i(\mu)})$.  Here  $s_i \in W_I$ is the simple reflection associated to a node $i \in I$.  They proved that $\Theta_i$ is an equivalence of triangulated categories, and used this to prove Brou\'e's conjecture for symmetric groups \cite{CR}. Many important equivalences in algebraic geometry and related areas turn out to be Rickard complexes, for example the spherical twists that appear in Mirror Symmetry.

The autoequivalences $\Theta_i$ on $D^b(\cC_\mu)$ satisfy the braid relations (up to isomorphism) \cite{CK}.  Hence, given $w \in W_I$ we have an autoequivalence $\Theta_w:D^b(\cC_\mu) \to D^b(\cC_{w(\mu)})$, which is defined up to isomorphism.

Focusing now on the case $\mu=0$, the connection to our work arises since the autoequivalence $\Theta_w$ categorifies the operator $w:V_0 \to V_0$.  Therefore, to study the action of $w$ on canonical bases of $V_0$, we'd like to understand the image of simple objects in $\cC_0$ under $\Theta_w$.  

It turns out that the theory of perverse equivalences, also due to Chuang and Rouquier \cite{CR2}, provides the correct setting to extract this information.  Indeed, a perverse equivalence is essentially a categorification of Definition \ref{def:lot} (cf.\ Lemma \ref{lem:lot-general}). 
More precisely, an equivalence $\Theta:D^b(\cC) \to D^b(\cC')$ is perverse if it preserves some filtrations on the domain and codomain categories such that on each subquotient the induced equivalence is t-exact up to shift.  Crucially, from a perverse equivalence one can extract a bijection of the simple objects by keeping track of the t-exact equivalences on each subquotient.  The details of this theory are explained in Section \ref{sec:pervdefs}.  

The main result that underlies our work is the following.

\begin{Theorem}\label{thm:3}
Let $\cC$ be a categorical representation of $U(\fg)$ and let $w \in W^{\mathrm{sep}}$ be a separable element.  Then $\Theta_w$ is a perverse equivalence.
\end{Theorem}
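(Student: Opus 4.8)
The plan is to reduce Theorem~\ref{thm:3} to two inputs: perversity of the ``atomic'' Rickard complexes $\Theta_{w_J}$ attached to longest elements of parabolic subgroups, and the formal calculus of perverse equivalences under inversion and composition (Chuang--Rouquier \cite{CR2}). By the definition of $W^{\mathrm{sep}}$, write $w=w_{I_1}w_{I_2}\cdots w_{I_r}$ with $I\supseteq I_1\supseteq\cdots\supseteq I_r$, and set $v_j:=w_{I_j}w_{I_{j+1}}\cdots w_{I_r}\in W_{I_j}$, so that $v_j=w_{I_j}v_{j+1}$ and $v_1=w$. Since left multiplication by the longest element of a parabolic reverses length, $\ell(v_j)=\ell(w_{I_j})-\ell(v_{j+1})$, which says exactly that both $w_{I_j}=v_jv_{j+1}^{-1}$ and $w_{I_{j+1}}=v_{j+2}v_{j+1}^{-1}$ are length-additive factorisations; by the braid relations for Rickard complexes \cite{CK} these give $\Theta_{v_j}\cong\Theta_{w_{I_j}}\circ(\Theta_{v_{j+1}^{-1}})^{-1}$ and $(\Theta_{v_{j+1}^{-1}})^{-1}\cong(\Theta_{w_{I_{j+1}}})^{-1}\circ\Theta_{v_{j+2}}$. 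Feeding the second into the first and unrolling the recursion yields
\[
\Theta_w\;\cong\;\Theta_{w_{I_1}}\circ\Theta_{w_{I_2}}^{-1}\circ\Theta_{w_{I_3}}\circ\cdots\circ\Theta_{w_{I_r}}^{(-1)^{r-1}} .
\]
At $\mu=0$ every factor is an autoequivalence of $D^b(\cC_0)$ --- the case feeding Theorems~\ref{thm:1} and~\ref{thm:2} --- and the general weight is treated identically. So it suffices to show that each $\Theta_{w_J}$ is a perverse equivalence and that the displayed alternating composite of such equivalences and their inverses is again perverse.

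\emph{The base case.} Splitting a disconnected $J$ into connected components --- whose longest elements commute, so their Rickard complexes compose along the trivial filtration --- and then restricting the categorical representation to the Levi subalgebra $\fg_J$, which leaves the functors $\Theta_i$ ($i\in J$) and hence $\Theta_{w_J}$ unchanged, reduces us to $J=I$ with $\fg$ simple and $w_J=w_0$. Here I would show that $\Theta_{w_0}$ is a perverse equivalence for the filtration of $\cC_\mu$ by the Serre subcategories generated by simples of bounded ``crystal depth'', with induced bijection the Lusztig involution and signs constant along the fibres of the depth statistic. The approach I have in mind is to verify that $\Theta_{w_0}$ carries $\Delta$-flags to $\nabla$-flags up to the Lusztig relabelling and a shift --- that it is a Ringel-duality type functor --- so that perversity for the highest-weight (= depth) filtration becomes automatic; the $\fsl_2$ case of this, i.e.\ perversity of $\Theta=\Theta_{w_0}$ there, is essentially Chuang--Rouquier \cite{CR}.

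\emph{Assembling the composite.} I would invoke the calculus of \cite{CR2}: the inverse of a perverse equivalence is perverse (dualise the filtrations, negate the perversity function); two perverse equivalences that agree on the filtration of the intermediate category compose to a perverse equivalence with perversities adding; and a perverse equivalence stays perverse after refining its filtration, provided the target filtration is refined to match. What remains is therefore that the crystal-depth filtrations on $\cC_0$ attached to $w_{I_1},\dots,w_{I_r}$ admit a common refinement for which all of $\Theta_{w_{I_1}}^{\pm1},\dots,\Theta_{w_{I_r}}^{\pm1}$ remain perverse. This is where nestedness --- i.e.\ separability --- does the work: for $K\subseteq J$ the $\fg_J$-depth filtration is compatible with the $\fg_K$-depth filtration, this being a crystal-theoretic consequence of the $\fg_K$-crystal structure on the simples sitting inside the $\fg_J$-crystal structure. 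Granting this, one passes to a common refinement of all $r$ filtrations and applies the composition principle along the chain $I_1\supseteq\cdots\supseteq I_r$, concluding that $\Theta_w$ is perverse; one then tracks the signs through the composite to confirm they are constant on the resulting preorder classes, so that the l.o.t.\ bijection of Definition~\ref{def:lot} is well defined.

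I expect the main obstacle to be the compatibility statement in the last paragraph --- formulating precisely, and proving, that for a nested family of subdiagrams the crystal-depth filtrations refine to a single filtration carrying all the $\Theta_{w_J}^{\pm1}$, together with checking that the associated sign data glue consistently along the chain. The base case is also genuine work, but it is local (it involves only one simple factor) and is closely modelled on the familiar $\fsl_2$ picture, whereas the gluing along a nested family is the essentially global ingredient that singles out ``separable'' as the right hypothesis.
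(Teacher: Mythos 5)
Your overall architecture---reduce to the longest elements $w_{I_a}$ of the nested parabolics and then compose---parallels the paper's induction, and your alternating-inverse factorisation of the positive lift of $w$ is a correct braid identity. The gap is in ``Assembling the composite''. That inverses of perverse equivalences are perverse, and that two perverse equivalences whose filtrations match at the intermediate category compose perversely, are indeed formal; what is \emph{not} formal is your third ingredient, that a perverse equivalence ``stays perverse after refining its filtration, provided the target filtration is refined to match''. The matching target refinement is not given in advance: it must be manufactured from the bijection on simples, and the assertion that the functor carries each refined term onto the predicted one is a genuine theorem rather than bookkeeping (recall that compositions of perverse equivalences fail to be perverse in general, which is exactly why Theorem~\ref{thm:3} needs work beyond the $r=1$ case). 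In the inductive step, $\Theta_{w_{I_1}}$ is known to be perverse only for the coarse $I_1$-filtration (Corollary~\ref{cor:HLLY}), and one must show it sends each finer term---the $\calZ$-refinement already twisted by $\xi_{I_2}\cdots\xi_{I_r}$ coming from the inner factors---onto the corresponding term twisted by the full $\xi_\calZ$. This is the content of Lemma~\ref{lem:Cprime} and Proposition~\ref{prop:prep}: for an equivalence preserving $D^b_\cD(\cA)\subset D^b_\cB(\cA)$ and t-exact up to shift on the quotient, and for any intermediate Serre subcategory $\cD\subset\cC\subset\cB$, one has $F(D^b_\cC(\cA))=D^b_{\cC'}(\cA)$ with $\cC'$ generated by $\cD$ and the images of the simples of $\cC$ under the induced bijection; the proof is a d\'evissage on Jordan--H\"older length using truncation triangles, and it is categorical, not crystal-theoretic. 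Your nestedness/crystal-compatibility remark only produces the common refinement itself (the $\calZ$-filtration of Definition~\ref{def:I-filts}, Lemma~\ref{lem:ordering-to-filtration}), which is the easy half; you correctly flag the remainder as the main obstacle, but you supply no mechanism for it, so the argument does not close as written.

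Two smaller points. First, the base case as you sketch it (Ringel duality, carrying $\Delta$-flags to $\nabla$-flags) is not available in this generality: a categorical representation $\cC$ is merely an abelian category with a categorical $\fg$-action and carries no highest-weight or standardly stratified structure, so there are no standard objects to move; the intended input is simply Theorem~\ref{thm:HLLY2} and Corollary~\ref{cor:HLLY} from \cite{HLLY}, which is itself a substantial theorem and not a routine globalisation of the $\fsl_2$ picture---citing it is fine, reproving it by your route is not. Second, your factorisation analyses the positive braid lift of $w$, whereas the paper's induction works directly with the composite $\Theta_{w_{I_1}}\circ\cdots\circ\Theta_{w_{I_r}}$ (this is what $\Theta_w$ means in its proof); the two functors differ in general, though both categorify the action of $w$ on $[\cC_0]_\CC$ and, since each $\xi_{I_a}$ is an involution, either would yield the same combinatorial corollaries---but your version additionally forces you to track perversity data through inverses, extra bookkeeping that the paper's choice (and its appeal to \cite{CR2} only for the definition, not a composition calculus) avoids.
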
  

The case $w=w_I$ was studied in \cite{HLLY}.  There is a subtlety in extending this case to all separable elements, since compositions of perverse equivalences are not in general perverse.  In Section \ref{sec:pervysepy}, we develop the necessary techniques to prove Theorem \ref{thm:3}, and in doing so we precisely describe the necessary filtrations and resulting bijections.  By investigating the consequences of this theorem on the level of Grothendieck groups, we obtain (generalisations of) Theorems \ref{thm:1} and \ref{thm:2}.

\subsection{Organisation of paper}
We give a brief overview of the paper:

In Section \ref{sec:2} we recall some of the basic representation-theoretic objects which play a key role throughout the paper.  Particularly important is the notion of an ordered $I$-filtration (Definition \ref{def:I-filt}), which is a categorical analogue of an isotypic filtration.  This will be crucially used in the construction of perverse equivalences that follow.

Section \ref{sec:3} is devoted to the theory of perverse equivalences.  The main result is Theorem \ref{thm:mainthm}, which is the precise formulation of Theorem \ref{thm:3} above.  

In Section \ref{sect:crystal} we explain how to deduce combinatorial consequences from perversity.  Theorem \ref{thm:gencomb} states a general result: in any $W$-module which arises from categorification (cf. Equation \eqref{eq:Wmods}), separable elements act by bijections up to l.o.t.  We also explain how to deduce analogous results about dual bases (cf. Corollary \ref{cor:gencomb}).

In Section \ref{sect:KL}, we prove  Theorem \ref{thm:1} as a special case of Theorem \ref{thm:2}. We then develop the combinatorics of type A crystals in order to find explicit descriptions for $\xi$ and $\leq$ on $\KL_\lambda$ in terms of tableaux. We  consider a number of special cases, including the generalisation of Rhoades' Theorem concerning the action of the long cycle (cf. Proposition \ref{prop:Rhoades}).

In Section \ref{sect:tensor-product}, we consider   canonical bases of tensor product representations. We give several examples of Theorem \ref{thm:2}, including the action of the long element of $S_n$ on the dual canonical basis in the $d$-fold tensor power of the standard representation, and the actions of separable permutations $(1,3),(1,2,3) \in S_3$ on the canonical and dual canonical basis of the tensor square of the adjoint representation of $\fsl_3$.

Appendix \ref{app:QR} gives a number of general results about operators which act by bijections up to l.o.t., including the connection to QR decompositions. In Appendix \ref{app:KL}, we generalise the definition of separable elements to arbitrary Coxeter groups, and extend known results about the Kazhdan--Lusztig cell modules to prove an analogue of Theorem \ref{thm:1} for the left regular representation of a Coxeter group (cf. Theorem \ref{thm:lot-KL}).  This argument does not require categorical methods.

\subsection*{Acknowledgements} We thank Ben Elias, Christian Gaetz, Anthony Licata, Daniel Tubbenhauer and Noah White for helpful conversations which improved this work. We are grateful to an anonymous referee for their feedback and suggestions. O.Y. is supported by the Australian Research Council under DP230100654.

\section{Representations, crystals and categorifications}\label{sec:2}

In this section we recall the necessary background about representation theory (classical and categorical), and associated combinatorial objects which we will need in the rest of the paper.  This section also serves to fix notation.

\subsection{Basic objects} \label{sec:basics}

Let $I$ be a simply-laced Dynkin diagram of finite type. For nodes $i,j \in I$ we write $i \sim j$ if they are connected by an edge, and $i \not\sim j$ otherwise. 

Let $\fg=\fg_I$ be the complex semisimple Lie algebra associated to $I$, with Chevalley generators $e_i,f_i,h_i$ for $i\in I$, and Cartan subalgebra $\fh_I=\Span_\CC(\{h_i\mid i \in I\})$.  We denote by $X=X_I$ its integral  weight lattice, and let $\langle h,\la \rangle \in \ZZ$ denote the natural pairing between elements of the integral Cartan subalgebra (i.e. integral linear combinations of the $h_i$) and integral weights.

Let $X^+=X^+_I$ be the cone of integral dominant weights, and $R=R_I \subset X$ the root lattice.  The latter is a free $\ZZ$-module generated by simple roots $\alpha_i$ for $i \in I$.  For $\alpha=\sum_i n_i\alpha_i \in R$ we let $\mathrm{ht}(\alpha):=\sum_i n_i$.  The simple roots define a partial order $\preceq$ (denoted $\preceq_{I}$) on $X$ given by $\mu \preceq_{I} \mu' \Longleftrightarrow \mu' -\mu \in \sum_{i\in I}\mathbb{Z}_{\geq0}\alpha_i$.

We will be working exclusively with complex finite-dimensional representations of $\fg$.  For such a representation $V$ and for $\mu \in X$, we let $V_\mu$ denote its $\mu$ weight space, so that $V=\bigoplus_{\mu \in X}V_\mu$ is the weight decomposition of $V$.
For $\la \in X^+$ we let $L(\la)$ be the irreducible representation with highest weight $\la$.  For any representation $V$, there is a canonical isomorphism $$\bigoplus_{\la \in X^+} \big(L(\la) \otimes \Hom_\fg(L(\la),V)\big) \longrightarrow V,$$ and we let $\Iso_\la(V)$ denote the image of the $\la$-summand under this map.  This is the $\la$-isotypic component of $V$, and we say $V$ is an isotypic representation of highest weight $\la$ if $V=\Iso_\la(V)$.

Given a subdiagram $J \subseteq I$, we consider the Lie subalgebra $\fg_J \subseteq \fg$.  For a representation $V$ of $\fg$, we let $\Res_J(V)$ denote the restriction of $V$ to $\fg_J$.  

The Weyl group $W=W_I$ is generated by $\{s_i \mid i\in I\}$ subject to the relations:
\begin{enumerate}
    \item $s_i^2=1$, 
    \item $s_is_j=s_js_i$ if  $i \not\sim j$,
    \item $s_is_js_i=s_js_is_j$ if  $i\sim j$.
\end{enumerate}
Relations (2) and (3) are known as the braid relations, and they are the defining relations of the braid group $B=B_I$ with generators $\{\sigma_i \mid i\in I\}$.

Recall that the Weyl group naturally acts on the weight lattice. Let $w_I \in W_I$ be the longest element.  This element induces a diagram automorphism $\theta_I:I \to I$, given by $w_I\cdot \alpha_i =-\alpha_{\theta(i)}$. For a subdiagram $J \subseteq I$, we regard $W_J \subseteq W_I$ as a parabolic subgroup.  For a representation $U$ of $W_I$, we let $\Res_J(U)$ denote its restriction to $W_J$.

\subsection{Crystals}

We can canonically associate a combinatorial object called an $I$-crystal to any representation of $\fg$.  For us, an $I$-crystal consists of a finite set $\SS$ together with ``Kashiwara operators'' $\tilde{e_i},\tilde{f_i}:\SS \dashrightarrow \SS$ for all $i$ (the $\dashrightarrow$ signifies a partially defined map),  and a weight function $\wt:\SS \to X$.  These satisfy the following properties: for any $b,b' \in \SS$
\begin{enumerate}
    \item $\tilde{e_i}(b)=b'$ if and only if $b=\tilde{f_i}(b')$,
    \item if $\tilde{e_i}(b)$ is defined then $\wt(\tilde{e_i}(b))=\wt(b)+\alpha_i$, and  if $\tilde{f_i}(b)$ is defined then $\wt(\tilde{f_i}(b))=\wt(b)-\alpha_i$.
\end{enumerate}
Since we will only be interested in ``normal'' crystals, i.e. crystals arising from finite dimensional representations of $\fg$, no additional data is necessary to define the crystal (the functions usually denoted $\varepsilon_i,\varphi_i$ are already determined).

Kashiwara originally discovered the theory of crystals \cite{Kash91}, and showed how to construct a crystal associated to any integrable representation of the  quantum group.  There are other means of constructing crystals, for example via algebraic geometry \cite{Kam10} or category theory (which we recall below) \cite{Lau-Vaz}.  

An important point shared by all these approaches is that they are closely related to the construction and existence of canonical bases of various types, and the underlying set of the crystal is  in natural bijection with such bases. 
In particular, the crystal $\SS$ associated to the representation $V$ satisfies the property that $\dim(V_\mu)=\#\{ b\in \SS \mid \wt(b)=\mu\}$.  

There are some basic operations on crystals which mirror analogous operations on representations.  
For example, the direct sum $\SS\oplus\SS'$ is a crystal whose underlying set is obtained by disjoint union, and the additional data is defined in the obvious way.  Additionally, given a subdiagram $J\subseteq I$, the restriction of an $I$-crystal $\SS$ is a $J$-crystal denoted $\Res_J(\SS)$.  The underlying set of $\Res_J(\SS)$ is the same as $\SS$, but we forget the Kashiwara operators indexed by $i \in I\setminus J$, and the weight of $b \in \Res_J(\SS)$ is the restriction of $\wt(b)$ to $\fh_J$.  Given $I$-crystals $\SS$ and $\SS'$ one can form their tensor product $\SS\otimes\SS'$ and external tensor product $\SS\boxtimes\SS'$. In both cases, the result is an $I$-crystal whose underlying set is $\SS\times\SS'$ (we refer the reader to \cite{crystals} for the precise definitions).  

We let $\LL(\la)$ be the crystal of $L(\la)$.  Note that there are unique highest and lowest weight elements in $\LL(\la)$, which we denote $b_\la^\text{high}$ and $b_{\la}^\text{low}$.  
The crystal associated to the representation $V$ is   isomorphic to $\bigoplus_{\la \in X^+}\LL(\la)^{\oplus m(\la)}$, where $m(\la)=\dim(\Hom_\fg(L(\la),V))$.  We let $\Iso_\la(\SS):=\LL(\la)^{\oplus m(\la)}$.  This is the crystal analogue of an isotypic component.

We can picture an $I$-crystal as an $I$-labelled directed graph, whose vertices are given by the underlying set of $\SS$, with an $i$-labelled edge from $b$ to $b'$ if $\tilde{f_i}(b)=b'$.  
See Figure \ref{F:crystal} an example in type A. Note that the crystal graph of $\Res_J(\SS)$ is obtained from the crystal graph of $\SS$ by deleting all edges labelled by $i \in I \setminus J$.

We recall the generalised Sch\"utzenberger involution $\xi_I:\SS\to\SS$, which will play an important role in our work.  To define this set map, it suffices to consider a simple crystal $\LL(\la)$, since as we described above, $\SS$ is isomorphic to a direct sum of simple crystals.  
The map $\xi_I:\LL(\la)\to\LL(\la)$ is determined by the following properties:
\begin{enumerate}
    \item $\wt(\xi_I(b))=w_I\cdot \wt(b)$,
    \item $\xi_I(\tilde{e_i}(b))=\tilde{f}_{\theta_I(i)}(\xi_I(b))$,
    \item $\xi_I(\tilde{f_i}(b))=\tilde{e}_{\theta_I(i)}(\xi_I(b))$.
\end{enumerate}
Note that by (1), $\xi_I$ swaps $b_\la^{\text{high}}$ and $b_\la^{\text{low}}$, and $\xi_I$ is then determined on the rest of the crystal by properties (2) and (3). Although $\xi_I$ is not a map of crystals, it is clear that its square is a crystal morphism, and hence must be the identity map. 

We now have the map $\xi_I:\SS \to \SS$, and so for any $J \subseteq I$ we also get maps $\xi_J:\Res_J(\SS) \to \Res_J(\SS)$.  Since these are just set maps, we can write $\xi_J:\SS \to \SS$ without confusion.

\subsection{Categorical representation theory}\label{sec:catrepth}

Categorical representations will play an important role for us. 
The foundations of this theory are due to Rouquier \cite{Rou2KM} and Khovanov--Lauda \cite{KaLu}.  Although there are some differences in their approach, later work of Cautis--Lauda \cite{CL} and Brundan \cite{Bru2KM} has clarified their relationships, and the latter, in particular, proves that their definitions agree.  

In this work we will use categorical representations of $U(\fg)$ following Rouquier's approach.  We note that everything we do has analogues in the graded setting, i.e. for the quantum group and the associated Hecke algebra, but that will not be our focus here.

Let $[\cA]_\CC$ denote the complexified Grothendieck group of an abelian category $\cA$, and recall that an exact functor $F:\cA \to \cA'$ induces a linear map $[F]:[\cA]_\CC \to [\cA']_\CC$.  Recall also that a Serre subcategory $\cA' \subset \cA$ is a full subcategory such that for any exact sequence $A \to B \to C$ in $\cA$, if $A,C \in \cA'$ then $B \in \cA'$.

Let $\bk$ be the ground ring.  A categorical representation of $U(\fg)$ consists of a family of $\bk$-linear abelian categories $\cC_\mu$ where $\mu \in X$, exact linear functors $\sE_i\sone_\mu:\cC_\mu \to \cC_{\mu+\alpha_i}$ and $\sF_i\sone_\mu:\cC_\mu \to \cC_{\mu-\alpha_i}$, and natural transformations:
\begin{enumerate}
    \item $x:\sE_i\sone_\mu \longrightarrow \sE_i\sone_\mu$,
    \item $t:\sE_i\sE_j\sone_\mu \longrightarrow \sE_j\sE_i\sone_\mu$,
    \item $\eta:\sone_\mu \longrightarrow \sF_i\sE_i\sone_\mu$,
    \item $\varepsilon:\sE_i\sF_i\sone_\mu \longrightarrow \sone_\mu$.
\end{enumerate}
These satisfy relations, which depend on some further choices of scalars, which we won't directly use and so don't specify.  Details can be found in \cite{Rou2KM, Bru2KM}.  In practice, we write $\cC$ for a categorical representation, leaving the remaining data implicit.  

There are three important consequences of these relations which we mention:
\begin{enumerate}
    \item There are functorial isomorphisms 
    \begin{align*}
        &\sE_j\sF_i\sone_\mu \cong \sF_i\sE_j\sone_\mu \text{ if } i \neq j , \\
        &\sE_i\sF_i\sone_\mu \cong \sF_i\sE_i\sone_\mu \oplus \sone_\mu^{\oplus \langle h_i,\mu\rangle} \text{ if } \langle h_i,\mu\rangle \geq 0,\\
        &\sE_i\sF_i\sone_\mu \oplus \sone_\mu^{\oplus -\langle h_i,\mu\rangle} \cong \sF_i\sE_i\sone_\mu  \text{ if } \langle h_i,\mu\rangle \leq 0
    \end{align*}

\item The functors $(\sE_i\sone_\mu,\sF_i\sone_{\mu+\alpha_i})$ are bi-adjoint.  One adjunction is part of the definition (the unit and counits are given by $\eta$ and $\varepsilon$), and the other is a consequence \cite[Theorem 4.3]{Bru2KM}.   

\item The natural transformations $x$ and $t$ define an action of the quiver Hecke (alias KLR) algebra on compositions of the $\sE_i\sone_\mu$ functors.  
\end{enumerate} 
The first point above implies that from a categorical representation $\cC$ one can produce a classical representation.  
This is a representation on the complex $X$-graded vector space $V=[\cC]_\CC:=\bigoplus_\mu [\cC_\mu]_\CC$.  Define linear maps $\pi(e_i):V \to V$ by 
$$
\pi(e_i)([Z])=[\sE_i\sone_\mu(Z)],
$$
for $Z \in \cC_\mu$, and similarly $\pi(f_i):V \to V$.  Letting $\pi(h_i)$ be the diagonal  operator which acts by the scalar $\langle h_i , \mu \rangle$ on $[\cC_\mu]_\CC$, the resulting representation is given by $\pi:U(\fg) \to \End(V)$.  We say that $\cC$ categorifies the representation $V$.

The third point can be used to show that there is an isomorphism of functors $\sE_i^n\sone_\mu \cong (\sE_i^{(n)}\sone_\mu)^{\oplus n!}$, where $\sE_i^{(n)}\sone_\mu$ is a subfunctor   of $\sE_i^{n}\sone_\mu$.  The functor $\sE_i^{(n)}\sone_\mu$ is therefore called the divided-power of $\sE_i$.

Now, suppose $\cC$ is a categorical representation which categorifies a representation $V$.  We recall that the crystal of $V$ can be extracted from $\cC$ (\cite[Proposition 5.20]{CR} and \cite[Theorem 4.31]{Brundan-Davidson}).  The construction proceeds as follows: the underlying set $\SS$ of the crystal is the set of isomorphism classes of simple objects in $\cC$.  Then for $[L] \in \SS$, set $\wt([L])=\mu$ for $L \in \cC_\mu$, and define the Kashiwara operators by $\tilde{e}_i([L])=\text{socle}(\sE_i(L))$ and $\tilde{f}_i([L])=\text{socle}(\sF_i(L))$ (recall that the socle of an object in an abelian category is its maximal semisimple subobject). 

If $\cC$ categorifies an isotypic representation, we say that $\cC$ is an isotypic categorification.  
Analogous to the isotypic decomposition of a representation, any categorical representation $\cC$ has an isotypic filtration \cite[Section 3.3]{HLLY}:
\begin{Definition}\label{def:I-filt}
    An  $I$-filtration of $\cC$ is a filtration by Serre subcategories
$$
0=\cC(0) \subset \cC(1) \subset \cdots \subset \cC(n)=\cC
$$
satisfying the following properties: 
\begin{enumerate}
    \item For every $i$, $\cC(i) \subset \cC$ is a categorical subrepresentation of $\fg_I$, i.e. it is invariant under all the data defining the categorical representation on $\cC$.
    \item The  representation on the Serre quotient $\cC(i)/\cC({i-1})$ is an isotypic categorification. 
\end{enumerate}
An  $I$-filtration is ordered if, letting $\la_i \in X^+$ denote the highest weight of $\cC(i)/\cC({i-1})$, the list $\la_1,\hdots,\la_n$ consists of distinct highest weights such that if $\la_i \prec_I \la_j$ then $i<j$.
\end{Definition}

Note that in an  $I$-filtration of $\cC$, different subquotients can be isotypic categorifications of the same highest weight, but this cannot happen in an ordered  $I$-filtration.  

Any categorical representation has an ordered $I$-filtration.  
Indeed, by \cite[Remark 3.9]{HLLY} an ordered $I$-filtration can be constructed from $\SS$.  Suppose that $\SS$ decomposes into  components 
$$
\SS=\Iso_{\la_1}(\SS) \sqcup \cdots \sqcup \Iso_{\la_n}(\SS),
$$
where $\la_1,\hdots,\la_n \in X^++$ are distinct dominant integral weights.  Further, we arrange the weights so that  $\la_i \prec_I \la_j$ implies $i < j$.  Let $\cC(i)$ be the Serre subcategory of $\cC$ generated by simple objects $L$ such that $[L] \in \Iso_{\la_j}(\SS)$ for $j \leq i$.  Then  $0 \subset \cC(1) \subset \cC(2) \subset \cdots \subset \cC(n)=\cC$ is an ordered $I$-filtration of $\cC$.

Now we move on to consider triangulated equivalences that arise from categorical representations.  Let $D^b(\cC)$ denote the bounded derived category.  For $i\in I$ the Rickard complex $\Theta_i :D^b(\cC_\mu) \to D^b(\cC_{s_i(\mu)})$ was constructed by Chuang and Rouquier, who initially used it to prove Brou\'e's abelian defect conjecture for symmetric groups \cite{CR}. By definition, $\Theta_i$ is a complex of functors concentrated in non-positive degrees:
$$
\Theta_i=\big(\cdots \longrightarrow (\Theta_i)^{-1} \longrightarrow (\Theta_i)^{0}\big),
$$
where for $r\geq0$ and $\mu_i:=\langle h_i,\mu \rangle$,
\begin{equation*}
(\Theta_i)^{-r}= \begin{cases}
   \sE_i^{(-\mu_i+r)}\sF_i^{(r)}\sone_\mu &\text{ if } \mu_i \leq 0 \\ 
   \sE_i^{(\mu_i+r)}\sF_i^{(r)}\sone_\mu &\text{ if } \mu_i \geq 0
\end{cases}
\end{equation*}
The differential $(\Theta_i)^{-r} \longrightarrow (\Theta_i)^{-r+1}$ is defined using the counit $\sE_i\sF_i\sone_\mu \longrightarrow \sone_\mu$ arising from the adjunction.  The fundamental results about Rickard complexes are:
\begin{enumerate}
    \item For every $\cC, i\in I$, and $\mu \in X$, $\Theta_i:D^b(\cC_\mu) \to D^b(\cC_{s_i(\mu)})$ is an equivalence of triangulated categories \cite{CR}.
    \item The Rickard complexes satisfy the braid relations, i.e. $\Theta_i\Theta_j \cong \Theta_j\Theta_i$ if $i \not\sim j$, and $\Theta_i\Theta_j\Theta_i \cong \Theta_j\Theta_i\Theta_j$ if $i \sim j$ \cite[Section 6]{CK}.
\end{enumerate}
Therefore, for any $w \in W$, we can define  $\Theta_w := \Theta_{i_1}\circ \cdots \circ \Theta_{i_\ell}$, where $w=s_{i_1}\cdots s_{i_\ell}$ is any reduced expression.  This is well-defined up to isomorphism.  

\section{Perverse equivalences}\label{sec:3}

\subsection{Definition and properties}\label{sec:pervdefs}
We begin with the set-up for the definition of a perverse equivalence.  
Let $\cA$ be an abelian category, and let $\bB$ be the set of simple objects in $\cA$ up to equivalence.  For any collection of objects $\bB' \subseteq \cA$, we let $\langle \bB' \rangle \subseteq \cA$ be the Serre subcategory generated by $\bB'$.  Recall that $\langle \bB' \rangle$ is the smallest full subcategory of $\cA$ containing $\bB'$ which is closed under quotients and extensions.

Given a subcategory $\cB \subseteq \cA$, let $D^b_{\cB}(\cA) \subseteq D^b(\cA)$ denote the subcategory consisting of complexes with cohomology supported in $\cB$.  Equivalently, $D^b_{\cB}(\cA)$ is the thick subcategory of $D^b(\cA)$ generated by $\cB$.  We set $D^b_{\bB'}(\cA):=D^b_{\langle \bB' \rangle}(\cA)$.

Now suppose we have a chain of abelian categories $\cC \subseteq \cB \subseteq \cA$, and the induced chain of triangulated categories:
$$
\cT=D^b_{\cC}(\cA) \;\subseteq\;  \cS=D^b_{\cB}(\cA) \;\subseteq\; \cR=D^b(\cA).
$$
Let $Q:\cS \to \cS/\cT$ denote the  quotient functor, and let $(\cR^{\leq0},\cR^{\geq0})$ denote the natural t-structure on $\cR$.  We have induced t-structures on $\cS$ and $\cS/\cT$ given by $(\cR^{\leq0}\cap \cS,\cR^{\geq0} \cap \cS)$ and $(Q(\cR^{\leq0}\cap \cS),Q(\cR^{\geq0} \cap \cS))$.  Note that the heart of $\cS$ is $\cB$ and the heart of $\cS/\cT$ is $\cB/\cC$.

Now suppose we have  two chains of subsets of $\bB$:
\begin{align}
\bB(1) \subseteq \bB(2) \subseteq \cdots\subseteq \bB(s)&=\bB \\
\bB'(1) \subseteq \bB'(2) \subseteq \cdots\subseteq \bB'(s)&=\bB
\end{align}
Setting $\cA(i):=\langle \bB(i) \rangle$ and $\cT(i):=D^b_{\cA(i)}(\cA)$ and similarly for $\cA'(i)$ and $\cT'(i)$, we have induced chains of categories:
\begin{align*}
&\cA(1) \subseteq \cA(2) \subseteq \cdots\subseteq \cA(s) \\
&\cT(1) \subseteq \cT(2) \subseteq \cdots\subseteq \cT(s)
\end{align*}
and similarly for the primed versions.  Set $\cT:=D^b(\cA)$.  Note that $\cT=\cT(s)=\cT'(s)$.

\begin{Definition}
Suppose $F:\cT \to \cT$ is an autoequivalence.  Let $p:\{1,\hdots,s\} \to \ZZ$.  Then $F$ is a perverse equivalence with respect to $(\cA(\bullet),\cA'(\bullet),p)$ if for every $i$:
\begin{enumerate}
\item $F(\cT(i)) = \cT'(i)$, and
\item the induced equivalence $F[-p(i)]:\cT(i)/\cT(i-1) \to \cT'(i)/\cT'(i-1)$ is t-exact.
\end{enumerate}
\end{Definition}

Given a perverse equivalence $F$, and carrying the notation from the definition, we obtain by Condition (2) abelian equivalences $\cA(i)/\cA(i-1) \to \cA'(i)/\cA'(i-1)$ for every $i$.  Since abelian equivalences must map simple objects to simple objects, these yield bijections $$\bB(i) \setminus \bB(i-1) \leftrightarrow \bB'(i) \setminus \bB'(i-1).$$  Patching these  together we obtain a bijection $\varphi_F:\bB \to \bB$. 

The functor $F$ induces an endomorphism $f:[\cA]_\CC \to [\cA]_\CC$. For $x,y\in\BB$, write $x\leq y$ if $x\in\BB(i)\setminus\BB(i-1)$ and $y\in\BB(j)\setminus\BB(j-1)$ for $i\leq j$. 

\begin{Lemma}\label{lem:lot-general}
The operator $f$ acts on $([\cA]_\CC,\BB,\leq)$ by $\varphi_F$ up to lower-order terms.
\end{Lemma}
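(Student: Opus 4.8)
The plan is to unwind the definition of a perverse equivalence at the level of the Grothendieck group, tracking what happens to each simple object $x \in \bB$. Fix $x \in \bB$, and let $i$ be the unique index with $x \in \bB(i) \setminus \bB(i-1)$, so that $\varphi_F(x)$ lies in $\bB'(i)\setminus\bB'(i-1)$. First I would consider $x$ as an object of the heart $\cA(i) \subseteq \cT(i)$, hence as an object of $\cT(i)$ concentrated in degree $0$. Applying $F$, Condition (1) of the definition gives $F(x) \in \cT'(i)$, and Condition (2) says that the image of $F(x)$ under the quotient functor $\cT'(i) \to \cT'(i)/\cT'(i-1)$, after the shift $[-p(i)]$, lies in the heart of $\cT'(i)/\cT'(i-1)$, i.e. is isomorphic to a single simple object of $\cA'(i)/\cA'(i-1)$. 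By construction of $\varphi_F$ that simple object is $\varphi_F(x)$. Therefore, in the Serre quotient, $F(x)[-p(i)]$ is isomorphic to $\varphi_F(x)$, which means that the cohomology of the complex $F(x)$ is concentrated in degree $p(i)$ modulo $\cA(i-1)$, where it equals $\varphi_F(x)$ up to a subobject/quotient lying in $\cA(i-1)$.

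The key step is then to pass to the Grothendieck group. Since $[\cA]_\CC$ is spanned by $[\bB]$ and the class of a complex is the alternating sum of the classes of its cohomology, the computation above shows
\[
f([x]) = [F(x)] = (-1)^{p(i)}\,[\varphi_F(x)] + \sum_{z} c_z\,[z],
\]
where the sum runs over simple objects $z$ lying in $\cA(i-1)$, i.e. over $z \in \bB(i-1)$, and hence over $z < x$ by the definition of the preorder $\leq$. Rewriting each $[z]$ in terms of $\{[\varphi_F(y)] : y < x\}$ (possible since $\varphi_F$ restricts to a bijection $\bB(i-1) \to \bB'(i-1)$, so $\{\varphi_F(y): y<x\}$ is again a basis of the span of $\bB(i-1)$ in $[\cA]_\CC$) puts $f([x])$ into exactly the form required by Definition \ref{def:lot}, with sign $(-1)^{p(i)}$ on the leading term $\varphi_F(x)$. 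Crucially, the sign depends only on $i$, i.e. only on the $\leq$-equivalence class of $x$, as demanded.

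The one point requiring a little care — and the likeliest obstacle — is verifying that $F(x)[-p(i)]$ really has cohomology concentrated in a single degree modulo $\cA(i-1)$, rather than merely having its degree-$0$ (in the quotient t-structure) part equal to $\varphi_F(x)$. This follows from t-exactness in Condition (2): the shifted functor $F[-p(i)]$ is t-exact for the induced t-structures on $\cT(i)/\cT(i-1)$ and $\cT'(i)/\cT'(i-1)$, and $x$ lies in the heart of the source, so its image lies in the heart of the target, i.e. is a complex with cohomology in degree $0$ in the quotient. Pulling this back along $\cT'(i) \to \cT'(i)/\cT'(i-1)$ tells us exactly that $F(x)$ has cohomology in degree $p(i)$ up to objects of $\cA(i-1)$ (which can appear in every degree but only contribute terms $[z]$ with $z<x$ to the Grothendieck class), which is all that is needed. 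The integrality of the coefficients $c_z$ is automatic since they are (signed) composition multiplicities of cohomology objects. This completes the verification against Definition \ref{def:lot}.
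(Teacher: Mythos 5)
Your argument is, in substance, the paper's own proof: apply perversity conditions (1) and (2) to the one-term complex $x$, conclude that $F(x)[-p(i)]$ becomes the simple object $\varphi_F(x)$ in the quotient $\cT'(i)/\cT'(i-1)$, then pass to Grothendieck groups and reindex the correction terms through $\varphi_F$; the conclusion, including the sign depending only on the equivalence class of $x$, is right. However, there is a prime-dropping slip in the middle that, as written, invalidates your justification of the reindexing step. Since $F(x)\in\cT'(i)$ and its image in $\cT'(i)/\cT'(i-1)$ is $\varphi_F(x)$ up to shift, the cohomology of $F(x)$ is concentrated in one degree modulo $\cA'(i-1)$, not modulo $\cA(i-1)$: the correction terms are classes of simples in $\bB'(i-1)$, the codomain filtration, not in $\bB(i-1)$. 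Consequently your parenthetical claim that $\{\varphi_F(y):y<x\}$ is ``again a basis of the span of $\bB(i-1)$'' is false in general: that set is exactly $\bB'(i-1)$ and spans $[\cA'(i-1)]_\CC$, which differs from $[\cA(i-1)]_\CC$ whenever the domain and codomain filtrations differ, which is the typical situation in Theorem \ref{thm:mainthm}. If the corrections really lay in the span of $\bB(i-1)$, you could not in general rewrite them in terms of $\varphi_F(y)$ with $y<x$. Fortunately the correct (primed) statement makes the rewriting immediate: each correction term is already of the form $\varphi_F(y)$ for a unique $y\in\bB(i-1)$, hence $y<x$, which is precisely how the paper concludes. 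So your proof is repaired simply by restoring the primes throughout; no new idea is needed, and the approach coincides with the paper's.
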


\begin{proof}
Suppose $x \in \bB(i) \setminus \bB(i-1)$.  Then in $[\cA]_\CC$ we have an equality
$$
f(x) = \pm \varphi_F(x) + \sum_{x' \in \bB'(i-1)}a_{x'}x',
$$
where $a_{x'} \in \ZZ$.  Note that $\varphi_F(x) \in \bB'(i) \setminus \bB'(i-1)$, and since by construction $\varphi_F(\bB(i-1))=\bB'(i-1)$, we can rewrite this as
$$
f(x) = \pm \varphi_F(x) + \sum_{y \in \bB(i-1)}a_y\varphi_F(y).
$$
\end{proof}

\subsection{Prior results}

Let $\cC$ be a categorical representation of $\fg$ with associated $\fg$-crystal $\SS$ (so the underlying set of $\SS$ is the isomorphism classes of simple objects in $\cC$).  We recall two results of Halacheva, Licata, Losev and Yacobi:

\begin{Theorem}[Corollary 6.7 in \cite{HLLY}]\label{thm:HLLY1}
Suppose $\cC$ is an isotypic categorical representation of type $\lambda$.  Then for any weight $\mu \in X$, the derived equivalence $\Theta_{w_I}[-\mathrm{ht}(w_I(\lambda)-\mu)]:D^b(\cC_\mu) \to D^b(\cC_{w_I(\mu)})$ is t-exact.  
\end{Theorem}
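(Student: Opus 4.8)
\emph{Strategy and reduction.} The assertion is precisely that $\Theta_{w_I}$ is a perverse equivalence for the \emph{trivial} one-step filtration, with perversity $\mathrm{ht}(w_I(\lambda)-\mu)$ on $D^b(\cC_\mu)$. The plan is to assemble $\Theta_{w_I}$ out of the rank-one Rickard complexes $\Theta_i$ along a reduced word — each $\Theta_i$ being genuinely perverse — and then to show that for the longest element these one-step-at-a-time filtrations collapse to the trivial one with the asserted perversity. I would first pass to a universal model: by the structure theory of categorical $U(\fg)$-representations, an isotypic categorification of type $\lambda$ is equivalent, compatibly with the whole $U(\fg)$-action and hence with every $\Theta_i$, to a tensor product categorification $\mathcal{L}(\lambda)\otimes\cD$, where $\mathcal{L}(\lambda)$ is the minimal (universal) categorification of $L(\lambda)$ and $\cD=\cC_\lambda$ is a plain abelian category carrying no action. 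Under this equivalence $\Theta_{w_I}$ becomes $\Theta_{w_I}^{\mathcal{L}(\lambda)}\otimes\mathrm{id}$, and $F\otimes\mathrm{id}$ is t-exact up to shift exactly when $F$ is, so we may assume $\cC=\mathcal{L}(\lambda)$.

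\emph{The rank-one input.} For any $\fsl_2$-categorification the Rickard complex $\Theta$ is perverse for the filtration by $\fsl_2$-string length: on the weight-$c$ part of a string of highest weight $m$ it is t-exact up to a cohomological shift equal to the distance $\tfrac{m+c}{2}$ of that weight from the bottom of the string. This is the rank-one perversity statement underlying the theory; it can be proved by induction on string depth, by unwinding the complex $\big(\cdots\to\sE_i^{(2)}\sF_i^{(2)}\to\sE_i^{(1)}\sF_i^{(1)}\to\sone_\mu\big)$ using the commutation isomorphisms $\sE_i\sF_i^{(r)}\cong\sF_i^{(r)}\sE_i\oplus\sF_i^{(r-1)}(\cdots)$, the vanishing of $\sF_i$ (resp.\ $\sE_i$) at the bottom (resp.\ top) of a string, and the long exact cohomology sequence. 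In the minimal categorification the $\fsl_2^{(i)}$-string decomposition of each $\cC_\mu$ is exactly the one predicted by the crystal $\LL(\lambda)$, so this pins down the perversity of every $\Theta_i$ explicitly.

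\emph{Assembly along a reduced word.} Fix a reduced expression $w_I=s_{i_1}\cdots s_{i_\ell}$, so $\Theta_{w_I}=\Theta_{i_1}\circ\cdots\circ\Theta_{i_\ell}$. On simple objects, $\Theta_{i_k}$ induces the flip within the $\fsl_2^{(i_k)}$-string, so the composite induces the composite of these flips — the generalized Schützenberger involution $\xi_I$. Tracking cohomological degrees, $\Theta_{w_I}$ carries a simple $L_b$ (for $b\in\LL(\lambda)$ of weight $\mu$) to $\pm L_{\xi_I(b)}$ up to a cohomological shift built from the per-step shifts $\varphi_{i_k}(b_k)$, where $b_\ell=b$ and $b_{k-1}$ is the bottom of the $i_k$-string through $b_k$. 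Telescoping the relation $\wt(b_{k-1})-\wt(b_k)=-\big(\varphi_{i_k}(b_k)-\varepsilon_{i_k}(b_k)\big)\alpha_{i_k}$ between consecutive weights along this path, together with a crystal identity special to the reduced word of the longest element, one checks that the total shift is independent of $b$ and equals $\mathrm{ht}(w_I(\lambda)-\mu)$; this is exactly the t-exactness of $\Theta_{w_I}[-\mathrm{ht}(w_I(\lambda)-\mu)]$.

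\emph{Main obstacle.} The delicate step is the last one. A priori $\Theta_{i_k}$ is t-exact only up to a shift that \emph{varies} over the $\fsl_2^{(i_k)}$-strings met at the $k$-th stage, and composites of perverse equivalences need not be perverse, so it is not automatic that these per-step, per-string discrepancies reorganise — over the entire word for $w_I$ — into a single invariant of $\mu$. Making this precise needs the highest-weight structure of $\mathcal{L}(\lambda)$, which lets one organise the intermediate string filtrations into one filtration respected by all the $\Theta_i$, together with the combinatorics peculiar to the longest element, which forces the resulting perversity function to be constant on each weight space with value $\mathrm{ht}(w_I(\lambda)-\mu)$. This collapse of the perverse filtration, and the accompanying shift bookkeeping, is the part I expect to require the most care.
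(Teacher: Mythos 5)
First, a point of comparison: the paper does not prove this statement at all --- it is imported verbatim from \cite{HLLY} (Corollary 6.7) --- so the only thing to judge is your argument on its own terms, and there the gap you flag in your last paragraph is not a matter of careful bookkeeping: the assembly-along-a-reduced-word mechanism is wrong, not merely delicate. Your rank-one input only says that each $\Theta_{i_k}$ is \emph{perverse} for the $i_k$-string filtration; the restriction of an $I$-isotypic categorification to a single node is no longer isotypic, so $\Theta_{i_k}$ does not send simples to shifted simples, and hence there are no per-step shifts and flips to compose. Moreover the two facts you want such a composition to yield are individually false: the composite of string flips along a reduced word for $w_I$ is Kashiwara's Weyl-group action on the crystal, which is in general \emph{not} the Sch\"utzenberger involution $\xi_I$; and along the flip path the per-step shifts $\varphi_{i_k}$ do not sum to a constant on a weight space. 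Your telescoping identity is correct only for the ``cascade to the bottom of each string'' path (where the total is indeed $\mathrm{ht}$ of $\mu$ minus the lowest weight), but that path is incompatible with the flip bijection you use --- this inconsistency is hidden in your third paragraph, where $b_{k-1}$ is declared to be the bottom of the string while the bijection is taken to be the flip.

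Concretely, take $\fg=\fsl_3$, $\lambda=\alpha_1+\alpha_2$ (the adjoint representation) and $\mu=0$. The crystal $\LL(\lambda)$ has two weight-zero elements $u,v$, with $\tilde{e}_1u=\tilde{f}_1u=0$ and $\varepsilon_2(u)=\varphi_2(u)=1$, and the same for $v$ with $1,2$ interchanged. Every composite of string flips along a reduced word for $w_I$ fixes both $u$ and $v$, whereas $\xi_I$ swaps them (if $\xi_I(u)=u$, then $\xi_I(\tilde{e}_2u)=\tilde{f}_1(u)=0$, a contradiction); and your per-step shifts along the flip path for $s_1s_2s_1$ sum to $0+1+0=1$ for $u$ but $1+0+1=2$ for $v$, while t-exactness of $\Theta_{w_I}$ up to a shift depending only on $\mu$ forces a common value (here $\mathrm{ht}(\alpha_1+\alpha_2)=2$). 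Indeed $\sE_1L_u=\sF_1L_u=0$, so $\Theta_1L_u=L_u$ with no shift, and the missing shift and the flip $u\mapsto v$ can only come from the lower-order cohomology of $\Theta_2L_u$ that your bookkeeping discards; no common refinement of the intermediate string filtrations can rescue this, since the final bijection $\xi_I$ is simply not the composite of the per-step bijections. (Your opening reduction $\cC\simeq\cL(\lambda)\otimes\cD$ is also asserted without justification and is not a standard structure theorem, though it is not the main problem.) The proof in \cite{HLLY} does not factor $\Theta_{w_I}$ along a reduced word: it works directly with the isotypic categorification, inducting from the highest weight space and using the interplay of $\Theta_{w_I}$ with the functors $\sE_i,\sF_i$ (categorifying $w_If_iw_I^{-1}=e_{\theta_I(i)}$), and it is this intertwining --- not a composite of string flips --- that both yields the uniform shift and identifies the induced bijection with $\xi_I$.
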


Let $\cC(1) \subset \cdots \subset \cC(s)=\cC$ be an  $I$-filtration of $\cC$ such that $\cC(i)/\cC(i-1)$ categorifies an isotypic representation of type $\lambda_i$.  For $\mu \in X$, let $\cC_\mu(i)=\cC_\mu \cap \cC(i)$.  Recall that the Sch\"utzenberger involution $\xi_I:\SS \to \SS$ satisfies the property that $\xi_I(\SS_\mu)=\SS_{w_I(\mu)}$.

\begin{Theorem}[Theorem 6.8 in \cite{HLLY}]\label{thm:HLLY2}
 The autoequivalence $\Theta_{w_I} :D^b(\cC_\mu) \to D^b(\cC_{w_I(\mu)})$ is a perverse equivalence with respect to $(\cC_\mu(\bullet),\cC_{w_I(\mu)}(\bullet),p)$, where $p(i)=\mathrm{ht}(w_I(\lambda_i)-\mu)$.  Moreover, the bijection between $\SS_\mu$ and $\SS_{w_I(\mu)}$ induced by $\Theta_{w_I}$ is equal to $\xi_I$.
\end{Theorem}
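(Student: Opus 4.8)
The plan is to establish perversity of $\Theta_{w_I}$ by restricting it to the subquotients of the given ordered $I$-filtration $\cC(1)\subset\cdots\subset\cC(s)=\cC$: each subquotient $\cC(i)/\cC(i-1)$ is an isotypic categorification of type $\lambda_i$, on which $\Theta_{w_I}$ is controlled by Theorem~\ref{thm:HLLY1}, so the two conditions in the definition of a perverse equivalence follow by patching these pieces together, and the induced bijection is then read off subquotient by subquotient.

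Write $\cT(i)=D^b_{\cC_\mu(i)}(\cC_\mu)$ and $\cT'(i)=D^b_{\cC_{w_I(\mu)}(i)}(\cC_{w_I(\mu)})$. First I would verify Condition~(1), that $\Theta_{w_I}(\cT(i))=\cT'(i)$. Since each $\cC(i)$ is a categorical subrepresentation of $\fg_I$, it is stable under the functors $\sE_j$, $\sF_j$ and their divided powers, so the Rickard complexes $\Theta_j$, and hence $\Theta_{w_I}$, restrict to equivalences on the derived categories attached to $\cC(i)$; as $\cT(i)$ is characterised inside $D^b(\cC_\mu)$ by having cohomology supported in $\cC_\mu(i)$, this yields $\Theta_{w_I}(\cT(i))=\cT'(i)$. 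For Condition~(2), I would identify the Verdier quotient $\cT(i)/\cT(i-1)$ with $D^b\big((\cC(i)/\cC(i-1))_\mu\big)$ — using that the filtration is by Serre subcategories which are categorical subrepresentations, so the categorical action, and with it the Rickard complex, descends to each Serre subquotient — and then apply Theorem~\ref{thm:HLLY1} with $\lambda=\lambda_i$. That theorem gives t-exactness of $\Theta_{w_I}[-\mathrm{ht}(w_I(\lambda_i)-\mu)]$ on $D^b\big((\cC(i)/\cC(i-1))_\mu\big)$, which under these identifications is precisely Condition~(2) with $p(i)=\mathrm{ht}(w_I(\lambda_i)-\mu)$. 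Hence $\Theta_{w_I}$ is perverse with respect to $(\cC_\mu(\bullet),\cC_{w_I(\mu)}(\bullet),p)$.

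To identify the induced bijection with $\xi_I$, I would argue one subquotient at a time. The t-exact equivalence on $\cC(i)/\cC(i-1)$ yields an abelian equivalence $(\cC(i)/\cC(i-1))_\mu\to(\cC(i)/\cC(i-1))_{w_I(\mu)}$, hence a bijection $\varphi_i$ of the relevant simple objects, and the bijection $\varphi_{\Theta_{w_I}}$ of the definition is the union of the $\varphi_i$ over $i$. Because $\Theta_{w_I}$ shifts weights by $w_I$, we get $\wt(\varphi_i(b))=w_I\cdot\wt(b)$. Next I would invoke the twisted intertwining of Rickard complexes — a consequence of the braid relations and the $\fsl_2$-relations together with $w_I(\alpha_j)=-\alpha_{\theta_I(j)}$ — relating $\Theta_{w_I}\sF_j$ to $\sE_{\theta_I(j)}\Theta_{w_I}$ (and its mirror with $\sE,\sF$ interchanged); since $\Theta_{w_I}$ is t-exact up to shift on the subquotient and socles are preserved by t-exact equivalences, passing to socles — which compute the Kashiwara operators on simple objects — turns these into $\varphi_i\circ\tilde{f_j}=\tilde{e}_{\theta_I(j)}\circ\varphi_i$ and $\varphi_i\circ\tilde{e_j}=\tilde{f}_{\theta_I(j)}\circ\varphi_i$. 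Finally, using that $\Theta_{w_I}$ carries the highest weight space $\cC_{\lambda_i}$ onto the lowest weight space $\cC_{w_I(\lambda_i)}$ and, by the structure theory of isotypic categorifications, respects the decomposition of $\cC(i)/\cC(i-1)$ into a minimal categorification and a multiplicity category, I would conclude $\varphi_i$ sends the highest weight element of each $\LL(\lambda_i)$-summand of $\Iso_{\lambda_i}(\SS)$ to the lowest weight element of the \emph{same} summand. These are exactly the characterising properties of $\xi_I$ on $\Iso_{\lambda_i}(\SS)$, so $\varphi_i=\xi_I|_{\Iso_{\lambda_i}(\SS)_\mu}$, and therefore $\varphi_{\Theta_{w_I}}$ is the restriction of $\xi_I$ to $\SS_\mu$.

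I expect the principal obstacle to be the derived-categorical bookkeeping in Conditions~(1) and~(2): identifying the Verdier quotients $\cT(i)/\cT(i-1)$ with the bounded derived categories of the abelian Serre subquotients, and verifying that the Rickard complex on $\cC$ descends to the one to which Theorem~\ref{thm:HLLY1} is applied. A secondary difficulty is the last part of the bijection argument — ensuring $\varphi_i$ respects the multiplicity structure, so that it equals $\xi_I$ itself rather than some other $\theta_I$-twisted crystal automorphism satisfying the same intertwining relations — which is precisely where the minimal-categorification description of isotypic categorifications enters.
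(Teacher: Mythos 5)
You should know that the paper does not contain a proof of this statement: it is imported verbatim as Theorem 6.8 of \cite{HLLY} (just as Theorem \ref{thm:HLLY1} is their Corollary 6.7), and is only used here as input for Corollary \ref{cor:HLLY} and Theorem \ref{thm:mainthm}. So there is no in-paper proof to compare against; judged against the cited source, your sketch follows essentially the same strategy: the terms of an ordered $I$-filtration are categorical subrepresentations, hence preserved by the Rickard complexes (for the \emph{equality} in Condition (1), note additionally that the inverse equivalence is built from the same biadjoint functors $\sE_i,\sF_i$, so it too preserves the filtration), and the t-exactness up to the shift $\mathrm{ht}(w_I(\lambda_i)-\mu)$ on each subquotient is Theorem \ref{thm:HLLY1} applied to the isotypic subquotient categorification.

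Two caveats on the steps you yourself flag as the obstacles. First, for Condition (2) you do not need, and should not try, to identify the Verdier quotient of $D^b_{\cC_\mu(i)}(\cC_\mu)$ by $D^b_{\cC_\mu(i-1)}(\cC_\mu)$ with $D^b$ of the abelian Serre subquotient (such an identification can fail in general); it suffices to show that for $X\in\cC_\mu(i)$ every cohomology of $\Theta_{w_I}(X)$ outside degree $-p(i)$ lies in $\cC(i-1)$. Even so, one must check that the ambient Rickard complex is compatible with the Rickard complex of the subquotient categorical representation to which Theorem \ref{thm:HLLY1} is applied; this holds because the complex consists of exact functors preserving the Serre subcategories and therefore commutes with the exact quotient functors, but it is a step that has to be said, not just gestured at. Second, your intertwining argument (socles plus $\Theta_{w_I}\sF_j\simeq\sE_{\theta_I(j)}\Theta_{w_I}$ up to shift) only shows that each $\varphi_i$ is a $\theta_I$-twisted crystal bijection of $\Iso_{\lambda_i}(\SS)$ in the relevant weights; such a bijection could a priori permute the isomorphic connected components, so it need not be $\xi_I$. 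Pinning it down to $\xi_I$ itself genuinely requires the structure theory of isotypic categorifications (the description via a minimal categorification tensored with a multiplicity category in \cite{HLLY}), exactly as you anticipate in your last sentence — that is where most of the real content of the cited proof lies, and your plan correctly identifies, but does not carry out, that ingredient.
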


Fix $J \subseteq I$, and let $\cD$ denote the restriction of $\cC$ to $\fg_J$.  That is, $\cD=\cC$ as categories, but $\cD$ is regarded as a categorical representation of $U(\fg_J)$.  The weight subcategories are $\cD_\gamma$ for $\gamma \in X_J$, where $\cD_\gamma=\bigoplus_\mu \cC_\mu$ and the sum ranges over all $\mu \in X$ such that $\mu|_{\fh_J}=\gamma$.

Suppose  $\cD(1) \subset \cdots \subset \cD(s)=\cD$ is a $J$-filtration of $\cD$ such that $\cD(j)/\cD(j-1)$ categorifies an isotypic representation of type $\kappa_j$.  For $\gamma \in X_J$ define $\cD_\gamma(j)$ as above, and for $\mu \in X$ let $\cC_\mu^J(j)=\cD(j) \cap \cC_\mu$.  Recall that $w_J$ is the longest element of $W_J \subseteq W$.

\begin{Corollary}\label{cor:HLLY}
    The autoequivalence $\Theta_{w_J} :D^b(\cC_\mu) \to D^b(\cC_{w_J(\mu)})$ is a perverse equivalence with respect to $(\cC_\mu^J(\bullet),\cC_{w_J(\mu)}^J(\bullet),p)$, where $p(j)=\mathrm{ht}(w_J(\kappa_j)-\gamma)$ and $\gamma=\mu|_{\fh_J} \in X_J$.  Moreover, the associated bijection is equal to $\xi_J$.   
\end{Corollary}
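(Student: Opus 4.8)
The plan is to deduce Corollary~\ref{cor:HLLY} from Theorem~\ref{thm:HLLY2}, applied not to $\cC$ but to the restricted categorical representation $\cD$ of $\fg_J$, and then to transport the resulting perverse structure along the inclusion of $\cC_\mu$ as a direct summand of the weight category $\cD_\gamma$.

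First I would record that the Rickard complexes $\Theta_i$ for $i\in J$ are assembled purely out of the functors $\sE_i,\sF_i$ and the structure natural transformations indexed by $i\in J$, all of which are literally the same data for $\cC$ and for $\cD$. Hence for a reduced word $w_J=s_{i_1}\cdots s_{i_\ell}$ with each $i_k\in J$, the composite $\Theta_{w_J}=\Theta_{i_1}\circ\cdots\circ\Theta_{i_\ell}$ agrees whether formed inside $\cC$ or inside $\cD$. Moreover each $\Theta_i$ respects the finer $X$-weight grading, carrying $D^b(\cC_\nu)$ to $D^b(\cC_{s_i\nu})$; since $\nu\mapsto\nu|_{\fh_J}$ is $W_J$-equivariant, $\Theta_{w_J}:D^b(\cD_\gamma)\to D^b(\cD_{w_J\gamma})$ restricts to an equivalence $D^b(\cC_\mu)\to D^b(\cC_{w_J\mu})$ which is compatible with the summand inclusions $D^b(\cC_\mu)\hookrightarrow D^b(\cD_\gamma)$ and $D^b(\cC_{w_J\mu})\hookrightarrow D^b(\cD_{w_J\gamma})$, using $(w_J\mu)|_{\fh_J}=w_J\gamma$.

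Next I would apply Theorem~\ref{thm:HLLY2} with $(I,\cC,\mu,\lambda_i)$ replaced by $(J,\cD,\gamma,\kappa_j)$ and with the given $J$-filtration $\cD(\bullet)$: this says $\Theta_{w_J}:D^b(\cD_\gamma)\to D^b(\cD_{w_J\gamma})$ is perverse with respect to $(\cD_\gamma(\bullet),\cD_{w_J\gamma}(\bullet),p)$ with $p(j)=\mathrm{ht}(w_J(\kappa_j)-\gamma)$, and that the induced bijection is the Sch\"utzenberger involution $\xi_J$ of the crystal of $\cD$ (whose underlying set is that of the crystal of $\cC$). It then remains to restrict this perverse equivalence to the direct summand $\cC_\mu\subseteq\cD_\gamma$. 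The abelian category $\cD_\gamma$ is the orthogonal direct sum $\bigoplus_{\nu|_{\fh_J}=\gamma}\cC_\nu$, so its bounded derived category and natural t-structure decompose accordingly; moreover any Serre subcategory of an orthogonal direct sum is the direct sum of its intersections with the summands, so $\cD_\gamma(j)$ meets $\cC_\mu$ in exactly $\cC_\mu\cap\cD(j)=\cC_\mu^J(j)$, and likewise $\cD_{w_J\gamma}(j)$ meets $\cC_{w_J\mu}$ in $\cC_{w_J\mu}^J(j)$. Since $\Theta_{w_J}$ respects these decompositions by the previous step, and since t-exactness of the induced equivalence on each subquotient is inherited by each orthogonal summand, the restriction of the perverse equivalence of $\cD$ to $\cC_\mu$ is a perverse equivalence $\Theta_{w_J}:D^b(\cC_\mu)\to D^b(\cC_{w_J\mu})$ with respect to $(\cC_\mu^J(\bullet),\cC_{w_J\mu}^J(\bullet),p)$ with the same function $p$, and the induced bijection is the restriction of $\xi_J$ to the simple objects of $\cC_\mu$.

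The step requiring the most care is the interplay between the two weight lattices $X_I$ and $X_J$: one must verify that the perverse filtration and the involution $\xi_J$, which a priori only see the $X_J$-grading, are compatible with the finer $X_I$-grading, so that everything genuinely restricts to $\cC_\mu$ and the induced bijection is a well-defined map $\SS_\mu\to\SS_{w_J\mu}$ equal to $\xi_J$. For the filtration this is automatic, since $\cD(j)$ is a Serre subcategory of $\cC=\cD$; for $\xi_J$ one checks inside each $\fg_J$-component of the crystal that its highest and lowest weight vectors have $X_I$-weights related by $w_J$, using that restriction to $\fh_J$ is injective on $\sum_{i\in J}\ZZ\alpha_i$. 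The remaining points are routine unwindings of the definitions of perverse equivalence and of the Rickard complexes $\Theta_i$.
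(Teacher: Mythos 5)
Your proposal is correct and takes essentially the same route as the paper: apply Theorem \ref{thm:HLLY2} to the restricted representation $\cD$ at the $J$-weight $\gamma=\mu|_{\fh_J}$, and then restrict the resulting perversity data along the summand inclusion $\cC_\mu\subseteq\cD_\gamma$ (using that $\Theta_{w_J}$ respects the finer $X$-grading), obtaining the same perversity function and the bijection $\xi_J$ shrunk to $\SS_\mu\to\SS_{w_J(\mu)}$. The only cosmetic difference is that the paper justifies the restriction of the filtrations via the identity $D^b_{\cA\cap\cB}(\cB)\cap D^b(\cC)=D^b_{\cA\cap\cC}(\cC)$, whereas you argue through the orthogonal decomposition of $\cD_\gamma$ into the weight subcategories $\cC_\nu$.
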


\begin{proof}
    We know that $F:=\Theta_{w_J}:D^b(\cC_\mu)\to D^b(\cC_{w_J(\mu)})$ is an equivalence, and we must show it is a perverse equivalence.  By Theorem \ref{thm:HLLY2}, we have that $G:=\Theta_{w_J}: D^b(\cD_\gamma) \to D^b(\cD_{w_J(\gamma)})$ is a perverse equivalence with respect to $(\cD_\gamma(\bullet),\cD_{w_J(\gamma)}(\bullet),q)$, where $p(j)=\mathrm{ht}(w_J(\kappa_j)-\gamma)$, and the associated bijection is $\xi_J:\res_J(\SS)_\gamma \to \res_J(\SS)_{w_J(\gamma)}$.

    Now, note that 
    \begin{align*}
    D^b_{\cD_\gamma(j)}(\cD_\gamma )\cap D^b(\cC_\mu) = D^b_{\cC_\mu^J(j)}(\cC_\mu).
    \end{align*}
    This follows from an easy fact: if $\cA,\cB,\cC$ are abelian subcategories of some larger, unspecified abelian category, with $\cC \subseteq  \cB$, then $D^b_{\cA \cap \cB}(\cB)\cap D^b(\cC) =D^b_{\cA \cap \cC}(\cC)$. Therefore the data defining the perverse equivalence $G$ restricts to define perversity data for $F$ (with the same perversity function) as claimed.  Furthermore, the associated bijection is the same as $\xi_J$ above, but the domain and codomain are suitably shrunk to $\SS_\mu$ and $\SS_{w_J(\mu)}$.
\end{proof}

The case where $\mu=0$ is of most interest to us, so henceforth we set $\cA=\cC_0$ and $\bB=\SS_0$.  For the longest element $w_I$ we set $\Theta_I=\Theta_{w_I}$, and more generally $\Theta_J=\Theta_{w_J}$ for any $J \subseteq I$.

Note that in the set up of Theorem \ref{thm:HLLY2}, $\Theta_{I}$ is perverse with respect to $(\cA(\bullet),\cA(\bullet),p)$, i.e. the filtrations in the domain and codomain agree.  Moreover the perversity function in this case simplifies to $p(i)=\mathrm{ht}(\lambda_i)$.  The analogous fact holds for any $J \subseteq I$. 

\subsection{Perversity \texorpdfstring{of $\Theta_w$}{} for separable elements}\label{sec:pervysepy}

\subsubsection{} \label{sec:prep}

In this section we prove some technical results in preparation for the study of $\Theta_w$ for separable elements $w \in W$.  

We consider the following set-up: $\cA$ is an abelian category and 
$$
\cD \subset \cC \subset \cB \subset \cA
$$
is a chain of Serre subcategories.  We suppose we have an equivalence $F:D^b(\cA) \to D^b(\cA)$ which satisfies perversity-like conditions with respect to the filtration $\cD \subset \cB \subset \cA$.  Specifically,  we have:
\begin{enumerate}
\item $F(D^b_\cB(\cA))=D^b_\cB(\cA)$,
\item $F(D^b_\cD(\cA))=D^b_\cD(\cA)$, and 
\item the induced autoequivalence of $D^b_\cB(\cA)/D^b_\cD(\cA)$ (which we  denote by $\ol{F}$) is t-exact up to shift $b \in \ZZ$.
\end{enumerate}
Notice that $F$ is not actually perverse since we don't require any exactness on $D^b_\cD(\cA)$ nor on the quotient $D^b(\cA)/D^b_\cB(\cA)$.

Let $\bB_\cB$ (respectively $\bB_\cC, \bB_\cD$) be the set of simple objects up to equivalence of $\cB$ (respectively $\cC, \cD$).  Let $\varphi:\bB_\cB\setminus \bB_\cD \to \bB_\cB\setminus \bB_\cD$ be the bijection induced by (3).  Define 
$$\cC':=\left\langle \cD, \varphi(L) \;|\; L \in \bB_\cC\setminus \bB_\cD \right\rangle.$$

Let $Q:D^b_\cB(\cA) \to D^b_\cB(\cA)/D^b_\cD(\cA)$ be the natural quotient functor.  Note that the restriction of $Q$ to $\cB$ is the natural quotient functor $\cB \to \cB/\cD$.

\begin{Lemma}\label{lem:Cprime}
The category $\cC'$ is the full subcategory of $\cB$ with objects 
$$\left\{ B \in \cB \;|\; Q(B) \in \ol{F}[b](\cC/\cD) \right\}.$$
\end{Lemma}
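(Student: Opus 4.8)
The plan is to exhibit both sides of the asserted equality as Serre subcategories of $\cB$ containing $\cD$, and then to identify them by comparing their sets of simple objects. Write $Q\colon\cB\to\cB/\cD$ for the restriction of the quotient functor; it is exact, and I will freely use the following standard facts about Serre quotients of length categories. \emph{(i)} The assignment $\cE\mapsto Q(\cE)$ is an inclusion-preserving bijection between Serre subcategories of $\cB$ containing $\cD$ and Serre subcategories of $\cB/\cD$, with inverse $\cF\mapsto Q^{-1}(\cF):=\{B\in\cB\mid Q(B)\in\cF\}$. \emph{(ii)} The simple objects of $\cB/\cD$ are exactly the $Q(L)$ for $L\in\bB_\cB\setminus\bB_\cD$, and $Q$ is injective on these isomorphism classes. \emph{(iii)} A Serre subcategory of a length category is determined by its set of simple objects, and the simple objects of the Serre subcategory generated by a family $T$ of objects are precisely the composition factors of members of $T$. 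Granting this, it suffices to check that the two subcategories in question have the same simple objects.

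First I would treat the right-hand side $\cC'':=\{B\in\cB\mid Q(B)\in\ol F[b](\cC/\cD)\}$. Since $\ol F[b]$ is t-exact it restricts to an abelian autoequivalence of the heart $\cB/\cD$; as $\cC/\cD=Q(\cC)$ is a Serre subcategory of $\cB/\cD$ by \emph{(i)}, its image $\ol F[b](\cC/\cD)$ is again a Serre subcategory of $\cB/\cD$, so by \emph{(i)} again $\cC''=Q^{-1}\bigl(\ol F[b](\cC/\cD)\bigr)$ is a Serre subcategory of $\cB$ containing $\cD$. By \emph{(ii)}, its simple objects are $\bB_\cD$ together with those $L\in\bB_\cB\setminus\bB_\cD$ with $Q(L)\in\ol F[b](\cC/\cD)$. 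The simples of $\cC/\cD$ are the $Q(L)$ with $L\in\bB_\cC\setminus\bB_\cD$, so the simples of $\ol F[b](\cC/\cD)$ are their images under $\ol F[b]$; and by the definition of $\varphi$ as the permutation of $\bB_\cB\setminus\bB_\cD$ induced by $\ol F[b]$ via the identification \emph{(ii)}, we have $\ol F[b]\bigl(Q(L)\bigr)\cong Q\bigl(\varphi(L)\bigr)$. Using the injectivity in \emph{(ii)}, I conclude that the set of simple objects of $\cC''$ is $\bB_\cD\sqcup\varphi(\bB_\cC\setminus\bB_\cD)$ (the union is disjoint because $\varphi$ maps $\bB_\cB\setminus\bB_\cD$ into itself).

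On the other side, each $\varphi(L)$ for $L\in\bB_\cC\setminus\bB_\cD$ is a simple object of $\cB$ not lying in $\cD$, so $\cC'=\langle\,\cD,\ \varphi(L)\mid L\in\bB_\cC\setminus\bB_\cD\,\rangle$ is the Serre subcategory of $\cA$ generated by objects all of which lie in $\cB$; since $\cB$ is a Serre subcategory of $\cA$ this forces $\cC'\subseteq\cB$, and $\cC'$ is then a Serre subcategory of $\cB$ containing $\cD$. By \emph{(iii)}, its simple objects are the composition factors of the generators, namely $\bB_\cD$ (the composition factors of the objects of $\cD$) together with the simples $\varphi(L)$; thus the set of simple objects of $\cC'$ is also $\bB_\cD\sqcup\varphi(\bB_\cC\setminus\bB_\cD)$. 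By \emph{(iii)} again, $\cC'$ and $\cC''$ coincide.

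The only point that needs genuine care is the bookkeeping around the quotient $\cB/\cD$: one must keep straight that $\varphi$ is by \emph{definition} the permutation of $\bB_\cB\setminus\bB_\cD$ corresponding to the action of $\ol F[b]$ on the simples of $\cB/\cD$, so that the relation $\ol F[b]\circ Q\cong Q\circ\varphi$ on simples is a definition rather than something to be established. Everything else is a routine application of the Gabriel correspondence for Serre subcategories of a Serre quotient.
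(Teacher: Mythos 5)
Your proof is correct and follows essentially the same route as the paper's: both verify that the right-hand side is a Serre subcategory of $\cB$ containing $\cD$ (your appeal to the preimage/Gabriel correspondence is just a packaged form of the paper's direct check with exact sequences and the exactness of $Q$), and both then conclude by matching simple objects, using that $\ol{F}[b](Q(L))\cong Q(\varphi(L))$ holds by the very definition of $\varphi$. No substantive difference or gap.
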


\begin{proof}
Denote by $\cC''$ the full subcategory of $\cB$ defined in the statement of the lemma.  We first show that $\cC''$ is a Serre subcategory.  Consider an exact sequence $A_1 \to A_2 \to A_3$ where $A_1, A_3 \in \cC''$.  Applying $Q$ we obtain $Q(A_1) \to Q(A_2) \to Q(A_3)$.  Since $\ol{F}[b]$ restricts to an autoequivalence of $\cB/\cD$, we have that $\ol{F}[b](\cC/\cD)$ is a Serre subcategory of $\cB/\cD$.  By hypothesis $Q(A_1), Q(A_3) \in \ol{F}[b](\cC/\cD)$, and therefore so is $Q(A_2)$.  This shows that $A_2 \in \cC''$.  

Since both $\cC'$ and $\cC''$ are Serre subcategories,  they are determined by the simple objects they contain.  Therefore to prove the lemma it suffices to show that they contain the same simple objects.  Clearly they both contain $\bB_\cD$.  If $L$ is a simple object in $\cC'$ not  in $\cD$, then $L=\varphi(L_1)$ for some $L_1 \in \bB_\cC \setminus \bB_\cD$.  Then $Q(L) \cong \ol{F}[b](Q(L_1))$ and so $L \in \cC''$.  Conversely, if  
$L$ is a simple object in $\cC''$ not  in $\cD$ then $Q(L) \cong  \ol{F}[b](Q(L_1))$ for some simple object in $\cC /\cD$, and so $L=\varphi(L_1) \in \cC'$.

\end{proof}

\begin{Proposition}\label{prop:prep}
In the above set-up, $F(D^b_\cC(\cA)) = D^b_{\cC'}(\cA)$.
\end{Proposition}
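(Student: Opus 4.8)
Throughout the argument I would set $\cS := D^b_\cB(\cA)$ and $\cT := D^b_\cD(\cA)$. By hypotheses (1) and (2), $F$ restricts to an autoequivalence of $\cS$ preserving $\cT$, hence descends to $\ol{F}$ on the quotient $\cS/\cT$ with $Q\circ F = \ol{F}\circ Q$, where $Q\colon \cS \to \cS/\cT$ is the quotient functor. Recall that $Q$ is t-exact for the induced t-structure on $\cS/\cT$, whose heart is $\cB/\cD$, so $H^k(Q(M)) \cong Q(H^k(M))$ for every $M\in\cS$ and every $k$. The overall strategy is to detect membership in $D^b_\cC(\cA)$ after applying $Q$, transport the condition through $\ol{F}$, and then recognise the result as membership in $D^b_{\cC'}(\cA)$.

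The first step is a detection principle: for any Serre subcategory $\cE$ with $\cD\subseteq\cE\subseteq\cB$ and any $M\in\cS$, one has $M\in D^b_\cE(\cA)$ if and only if $H^k(Q(M))\in\cE/\cD$ for all $k$. By t-exactness of $Q$ this reduces to the heart-level statement that for $N\in\cB$, $Q(N)\in\cE/\cD$ if and only if $N\in\cE$. This I would prove by comparing Jordan--Hölder factors: the simple subquotients of $Q(N)$ in $\cB/\cD$ are exactly the $Q(L)$ for $L$ a simple subquotient of $N$ not lying in $\cD$, while the simple objects of $\cE/\cD$ are exactly $Q(\bB_\cE\setminus\bB_\cD)$; hence $Q(N)\in\cE/\cD$ forces every simple subquotient of $N$ to lie in $\bB_\cD\cup(\bB_\cE\setminus\bB_\cD)=\bB_\cE$, so $N\in\cE$ since $\cE$ is Serre, and the converse is immediate.

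The second step transports the condition through $\ol{F}$. By hypothesis (3), $\ol{F}[b]$ is t-exact, so it restricts to an autoequivalence $\ol{F}_0$ of the heart $\cB/\cD$, and Lemma~\ref{lem:Cprime} says precisely that the image of $\cC'$ in $\cB/\cD$ equals $\ol{F}_0(\cC/\cD)$, i.e.\ $\cC'/\cD = \ol{F}_0(\cC/\cD)$. Since $\ol{F}_0$ is t-exact it commutes with $H^k$, and since ``cohomology supported in a given subcategory'' is invariant under the shift $[b]$, we conclude that $Q(M)$ has all cohomology in $\cC/\cD$ if and only if $\ol{F}(Q(M))$ has all cohomology in $\cC'/\cD$. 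Now I would chain the equivalences, for $M\in\cS$: $M\in D^b_\cC(\cA)$ $\iff$ $Q(M)$ has cohomology in $\cC/\cD$ $\iff$ $\ol{F}(Q(M))=Q(F(M))$ has cohomology in $\cC'/\cD$ $\iff$ $F(M)\in D^b_{\cC'}(\cA)$, using the detection principle for $\cC$ and for $\cC'$ at the two ends (this last step is legitimate because $F(\cS)=\cS$, so $F(M)\in\cS$). This biconditional, applied to $M$ and to $F^{-1}(N)$, yields both inclusions $F(D^b_\cC(\cA)) = D^b_{\cC'}(\cA)$.

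The main obstacle is really the first step: verifying that the property ``cohomology supported in $\cC$'' is faithfully reflected by the quotient functor $Q$, i.e.\ combining t-exactness of $Q$ with the Jordan--Hölder bookkeeping in the Serre quotient $\cB/\cD$. Everything after that is formal, the only point requiring care being the shift-invariance of cohomological support, which is what renders the discrepancy in ``t-exact up to shift $b$'' harmless, and the correct reading of Lemma~\ref{lem:Cprime} as the identity $\cC'/\cD = \ol{F}[b](\cC/\cD)$ inside $\cB/\cD$.
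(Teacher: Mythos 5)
Your proof is correct, and it takes a genuinely different route from the paper's. The paper first treats objects $X$ of the heart $\cC$ (via Lemma \ref{lem:Cprime} and the identity $Q(F(X)[b])\cong\ol{F}[b](Q(X))$), and then extends to arbitrary complexes in $D^b_\cC(\cA)$ by induction on total Jordan--H\"older length, using truncation triangles and long exact sequences together with the Serre property of $\cC'$; the reverse inclusion is obtained by running the same argument for $F^{-1}$. You instead prove a single ``detection principle'' --- for any Serre subcategory $\cE$ with $\cD\subseteq\cE\subseteq\cB$, membership of $M\in D^b_\cB(\cA)$ in $D^b_\cE(\cA)$ is detected by the condition $H^k(Q(M))\in\cE/\cD$ for all $k$ --- and then transport this condition through the t-exact equivalence $\ol{F}[b]$, reading Lemma \ref{lem:Cprime} as the identity $\cC'/\cD=\ol{F}[b](\cC/\cD)$ inside $\cB/\cD$. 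This yields the biconditional $M\in D^b_\cC(\cA)\iff F(M)\in D^b_{\cC'}(\cA)$, so both inclusions come at once and no induction or truncation argument is needed. What your approach buys is economy and a cleaner treatment of a point the paper's write-up glosses: when the paper concludes ``$F(X)[b]\in\cC'$'' from $Q(F(X)[b])\in\ol{F}[b](\cC/\cD)$, the complex $F(X)[b]$ could a priori have cohomology in degrees other than zero supported in $\cD$; your cohomology-by-cohomology detection handles this automatically. What it costs is that the detection principle itself requires the Jordan--H\"older bookkeeping in the Serre quotient (the bijection between simples of $\cB$ outside $\cD$ and simples of $\cB/\cD$, and that $\cE/\cD$ sits as a Serre subcategory of $\cB/\cD$), together with t-exactness of $Q$ and $H^kQ\cong QH^k$; these are standard and rest on exactly the same implicit finite-length and induced-t-structure assumptions the paper itself uses, so your argument is on equal footing with the original.
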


\begin{proof}
It suffices to show that $F(D^b_\cC(\cA)) \subseteq D^b_{\cC'}(\cA)$, since the same argument will also show that $F^{-1}(D^b_{\cC'}(\cA)) \subseteq D^b_{\cC}(\cA)$.

We first show that if $X \in \cC$ then $F(X) \in D^b_{\cC'}(\cA)$.  Fix some $i \in \ZZ$ and consider the functor $D^b_\cB(\cA) \to \cB/\cC'$ given by $R \circ H^i$, where $R:\cB \to \cB / \cC'$ is the natural quotient functor and $H^i$ is the standard cohomology functor.  Note that $R \circ H^i (Y)=0$ for any $Y \in D^b_{\cC'}(\cA)$.  Therefore, by the universal property of Verdier quotients, there is a functor $G:D^b_{\cB}(\cA) / D^b_{\cC'}(\cA) \to \cB /\cC'$ making the following commute:
\[\begin{tikzcd}D^b_{\cB}(\cA) \arrow[rr] \arrow{d}{R \circ H^i} && D^b_{\cB}(\cA) / D^b_{\cC'}(\cA) \arrow{lld}{G} \\
\cB /\cC'\end{tikzcd}\]
(In fact, $G$ is given by the $i$-th cohomology with respect to the t-structure which is induced on $D^b_{\cB}(\cA) / D^b_{\cC'}(\cA)$, but we do not require this fact.)

Note that by (1),  $F(X) \in D^b_{\cB}(\cA)$.  Applying the quotient functor to $D^b_\cB(\cA)/D^b_\cD(\cA)$, we have that $Q(F(X)[b]) \cong \ol{F}[b](Q(X))$.  Then by Lemma \ref{lem:Cprime} we have $F(X)[b] \in \cC'$, implying that $F(X) \in D^b_{\cC'}(\cA)$ as desired.  

For $X \in \cA$ let $\ell(X)$ be the Jordan--H\"older length of $X$, i.e. the length of its Jordan--H\"older filtration.  
For $X \in D^b(\cA)$ let
$$
\ell(X)=\sum_i \ell(H^i(X)).
$$
Now suppose $X \in D^b_\cC(\cA)$.  We will show by induction on $\ell:=\ell(X)$ that $F(X) \in D^b_{\cC'}(\cA)$.

If $\ell=1$  then, up to isomorphism, $X$ is concentrated in one degree and the claim follows from the previous paragraphs.  
Now suppose $\ell >1$.  Consider first the case where  there exists $r$ such that $\ell(\tau^{\leq r}(X)), \ell(\tau^{\geq r+1}(X)) < \ell$.  Then applying $F$ to the exact triangle
$$
\tau^{\leq r}(X) \to X \to \tau^{\geq r+1}(X) \to
$$
we obtain
$$
F(\tau^{\leq r}(X)) \to F(X) \to F(\tau^{\geq r+1}(X)) \to
$$
and from the long exact sequence in cohomology we get exact sequences
$$
H^i(F(\tau^{\leq r}(X))) \to H^i(F(X)) \to H^i(F(\tau^{\leq r}(X)))
$$
for every $i$.  Now note that if $H^i(\tau^{\leq r}(X))$ is nonzero then it is isomorphic to $H^i(X)$, and similarly for $H^i(\tau^{\geq r+1}(X))$.  Therefore $H^i(\tau^{\leq r}(X)), H^i(\tau^{\geq r+1}(X)) \in \cC$, i.e. $\tau^{\leq r}(X), \tau^{\geq r+1}(X) \in D^b_{\cC}(\cA)$.  By inductive hypothesis therefore we have that $H^i(F(\tau^{\leq r}(X))), H^i(F(\tau^{\geq r+1}(X))) \in \cC'$.  Since $\cC'$ is a Serre subcategory we obtain from the above exact sequence that $H^i(F(X)) \in \cC'$ as well, i.e. that $F(X) \in D^b_{\cC'}(\cA)$.

Finally if there is no $r$ such that $\ell(\tau^{\leq r}(X)), \ell(\tau^{\geq r+1}(X)) < \ell$, then $X$ is a two-term complex with zero differential.  Hence it is isomorphic to a direct sum of two one-term complexes, and the result follows from the work above.
\end{proof}

\subsubsection{}\label{sec:mainthm}
Consider a chain of diagrams $\calZ=(I_1,\hdots,I_r)$ where $I \supseteq I_1 \supseteq \cdots \supseteq I_r$.  Let $w \in W$ be the corresponding separable element: $w=w_\calZ:=w_{I_1}\cdots w_{I_r}$.  In this section we will prove that $\Theta_w$ is a perverse equivalence.  For this we need a filtration of $\cC$ which is compatible with $\calZ$.

We set up up some notation for working with filtrations. 
Let $[0,n]=\{0,\hdots,n\}$ and suppose we have a filtration a filtration $\cC(\bullet)$ of $\cC$ with terms indexed by $[0,n]$:
\[0=\cC(0)\subset\cC(1)\subset\cdots\subset\cC(n)=\cC.\]
For a subset $A:=\{i_0,\dots,i_\ell\}$ of indices with $0=i_0<i_1<\dots<i_\ell=n$, we denote by $\cC_A(\bullet)$  the coarsening of $\cC(\bullet)$ given by 
\[0=\cC(i_0)\subset\cC(i_1)\subset\cdots\subset\cC(i_\ell)=\cC.\]
Let $\cC_A(k):=\cC(i_k)$ denote the $k$-th term of $\cC_A$.  
There is an induced filtration on each subquotient $\cC_A(k)/\cC_A(k-1)$ by
\[0=\cC(i_{k-1})/\cC(i_{k-1})\subset\cC(i_{k-1}+1)/\cC(i_{k-1})\subset\cdots\subset\cC(i_k)/\cC(i_{k-1})=\cC_A(k)/\cC_A(k-1).\]

\begin{Definition}\label{def:I-filts}
A filtration $\cC(\bullet)$ of $\cC$ is a \emph{$\calZ$-filtration} if it can be coarsened to an $I_1$-filtration $\cC_A$, and if $r>1$, the induced filtration on each $\cC_A(k)/\cC_A(k-1)$ is an $(I_2,\dots,I_r)$-filtration.

A filtration of $\cA$ (the zero weight subcategory of $\cC$) induced by an $\calZ$-filtration is a \emph{$\calZ$-filtration} of $\cA$.
\end{Definition}

\begin{Lemma}
    Let $\cC$ be a categorical representation of $U(\fg)$, and let $\calZ=(I_1,\hdots,I_r)$ be a chain of diagrams. Then a $\calZ$-filtration of $\cC$ exists.
\end{Lemma}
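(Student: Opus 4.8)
The plan is to construct a $\calZ$-filtration by induction on $r$, the length of the chain $\calZ=(I_1,\hdots,I_r)$. The base case $r=1$ is precisely the statement that any categorical representation admits an $I_1$-filtration; this is recalled in the excerpt (following Definition \ref{def:I-filt}, via \cite[Remark 3.9]{HLLY}), applied to $\cC$ viewed as a categorical representation of $U(\fg_{I_1})$ by restriction. Note that restricting a categorical representation of $U(\fg)$ to $U(\fg_J)$ is a categorical representation of $U(\fg_J)$, so the cited existence result applies.

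For the inductive step, suppose $r>1$ and that $\calZ$-filtrations exist for all chains of length $r-1$. First apply the base case to obtain an $I_1$-filtration
\[0=\cC_A(0)\subset\cC_A(1)\subset\cdots\subset\cC_A(\ell)=\cC\]
of $\cC$ (this will be the coarsening $\cC_A$ demanded by Definition \ref{def:I-filts}). Each subquotient $\cC_A(k)/\cC_A(k-1)$ is, by definition of an $I_1$-filtration, an isotypic categorification of some highest weight $\la_k\in X^+_{I_1}$; in particular it is again a (abelian, indeed categorical) representation of $U(\fg)$, hence of $U(\fg_{I_2})$ by restriction. Applying the inductive hypothesis to the chain $(I_2,\hdots,I_r)$ and to the categorical representation $\cC_A(k)/\cC_A(k-1)$, we obtain an $(I_2,\hdots,I_r)$-filtration of each subquotient. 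We then define $\cC(\bullet)$ to be the filtration of $\cC$ whose terms, between $\cC_A(k-1)$ and $\cC_A(k)$, are the preimages in $\cC_A(k)$ of the terms of the chosen $(I_2,\hdots,I_r)$-filtration of $\cC_A(k)/\cC_A(k-1)$ under the Serre quotient functor $\cC_A(k)\to\cC_A(k)/\cC_A(k-1)$. By construction this refines $\cC_A$, it is a genuine filtration by Serre subcategories of $\cC$, its coarsening to $A$ is the $I_1$-filtration we started with, and the induced filtration on each $\cC_A(k)/\cC_A(k-1)$ is exactly the $(I_2,\hdots,I_r)$-filtration we selected. Hence $\cC(\bullet)$ is a $\calZ$-filtration.

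The main point requiring care—and the only genuine obstacle—is the bookkeeping around Serre quotients: one must check that the preimage under $\cC_A(k)\to\cC_A(k)/\cC_A(k-1)$ of a Serre subcategory is again a Serre subcategory of $\cC_A(k)$ (standard), that these preimages are nested and interpolate correctly between $\cC_A(k-1)$ and $\cC_A(k)$, and that the induced filtration on the subquotient, in the sense defined before Definition \ref{def:I-filts}, really is the $(I_2,\hdots,I_r)$-filtration we chose rather than some coarsening or refinement of it. All of this is formal but needs to be stated cleanly so that the defining conditions of Definition \ref{def:I-filts} are literally verified. One should also remark that $\cC_A(k)/\cC_A(k-1)$, being an isotypic categorification, is in particular a categorical representation of $U(\fg)$ and a fortiori of $U(\fg_{I_2})$, so that the inductive hypothesis genuinely applies; this is immediate from the definitions but worth a sentence.
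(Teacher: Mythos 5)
Your proposal is correct and takes essentially the same route as the paper: induction on $r$, with the base case supplied by the cited HLLY remark, and the inductive step obtained by refining an $I_1$-filtration of $\cC$ via the preimages (under the Serre quotient functors $\cC_A(k)\to\cC_A(k)/\cC_A(k-1)$) of $(I_2,\hdots,I_r)$-filtrations of the subquotients; your extra bookkeeping about preimages of Serre subcategories only makes explicit what the paper compresses into ``concatenating these chains.'' The one small slip is the claim that each subquotient is a categorical representation of $U(\fg)$ --- a priori it is only one of $U(\fg_{I_1})$ --- but since $I_2\subseteq I_1$ the restriction to $U(\fg_{I_2})$ that you actually use is available, so the argument is unaffected.
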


\begin{proof}
We construct a $\calZ$-filtration inductively as follows: If $r=1$ then a $\calZ$-filtration is given in \cite[Remark 3.8]{HLLY}.  Let $r>1$ and suppose we've constructed a $\calZ'$-filtration of any categorical representation of $U(\fg)$, where $\calZ'$ is a chain of length $r-1$. 
Let $$0=\cC(0) \subseteq \cC(1) \subseteq \cdots \subseteq \cC(s)=\cC$$ be an  $I_1$-filtration of $\cC$.  Then for any $i$ we have that $\cC(i)/\cC(i-1)$ is a categorical representation of $\fg_{I_1}$.  
Let $\calZ':=(I_2,\hdots,I_{r})$, and consider $\cC(i)/\cC(i-1)$ as a categorical representation of $\fg_{I_2}$ by restriction.  By hypothesis we have a $\calZ'$-filtration of $\cC(i)/\cC(i-1)$. This induces a chain of categories
$$
\cC(i-1)=\cC(i-1,0)\subseteq \cC(i-1,1) \subseteq \cdots \subseteq \cC(i-1,d_i)=\cC(i).
$$
Concatenating these chains together yields a $\calZ$-filtration of $\cC$.
\end{proof}

All the pieces are now in place to define the perversity data for $\Theta_w$.  
Let $\cC(\bullet)$ be a $\calZ$-filtration of $\cC$ (and hence induces a $\calZ$-filtration of $\cA$). Fix $1 \leq a \leq r$ and find $A=\{i_0,\dots,i_\ell\}$ so that the coarsening $\cC_A(\bullet)$ is an $I_a$-filtration.  By Corollary \ref{cor:HLLY}, $\Theta_{w_{I_a}}:D^b(\cA) \to D^b(\cA)$ is perverse with respect to the filtration $0=\cA(i_0) \subseteq \cA(i_1) \subseteq \cdots \subseteq \cA(i_\ell)=\cA$ on the domain and codomain.

By construction each $\cA(i)$ is a Serre subcategory of $\cA$.  Therefore we can define $\bB(i) \subseteq \bB$ by  $\cA(i) := \big< \bB(i) \big>$.  Now define $$\bB'(i)=\big< \xi_{I_1} \circ \cdots  \circ \xi_{I_r}\big(\bB(i)\big) \big>,$$ and let $\cA'(i):=\big< \bB'(i) \big>$.

To the subquotient $\cA(i) / \cA(i-1)$  in the $\calZ$-filtration we can associate a sequence of dominant weights  $\lambda^i_a \in X^+_{I_a}$ ($a=1,\hdots,r$) determined by:
\begin{align*}
\big[ \cA(i) / \cA(i-1) \big]_\CC \subseteq \Iso_{\lambda_a^i}(\Res_{\fg_{I_a}}(V)).
\end{align*}
Set $p(i):=\sum_{a=1}^r \mathrm{ht}(\lambda^i_a)$.

\begin{Theorem}\label{thm:mainthm}
The autoequivalence $\Theta_w$ of $D^b(\cA)$ is perverse with respect to $(\cA(\bullet),\cA'(\bullet),p)$.  Moreover, the bijection $\varphi_{\Theta_w}$ is equal to $\xi_{\calZ}=\xi_{I_1}\circ\cdots\circ\xi_{I_r}$.
\end{Theorem}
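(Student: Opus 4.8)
The plan is to induct on the length $r$ of the chain $\calZ=(I_1,\dots,I_r)$. When $r=1$ the statement is exactly Corollary~\ref{cor:HLLY} (equivalently Theorem~\ref{thm:HLLY2} when $I_1=I$) applied with $J=I_1$ and $\mu=0$: there $\cA'(\bullet)=\cA(\bullet)$, the perversity function is $i\mapsto\mathrm{ht}(\lambda^i_1)$, and the induced bijection is $\xi_{I_1}=\xi_\calZ$. For the inductive step I would factor $w=w_{I_1}\,w'$ with $w'=w_{\calZ'}$ and $\calZ'=(I_2,\dots,I_r)$, so $\Theta_w\cong\Theta_{I_1}\circ\Theta_{w'}$, fix a $\calZ$-filtration of $\cC$ inducing $\cA(\bullet)$ on $\cA=\cC_0$, and let $\cA_A(\bullet)$ be the coarsening that is an $I_1$-filtration, with subquotients $\cM_1,\dots,\cM_\ell$; by definition the filtration induced on each $\cM_k$ is a $\calZ'$-filtration.

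First I would analyse $\Theta_{w'}$. Since it is a composite of Rickard complexes $\Theta_i$ with $i\in I_2\subseteq I_1$, and each $\cA_A(k)$ lifts to a categorical subrepresentation of $\fg_{I_1}$, the functor $\Theta_{w'}$ genuinely preserves every $D^b_{\cA_A(k)}(\cA)$ and induces on $D^b(\cM_k)$ the corresponding Rickard autoequivalence of the subquotient categorical representation. The inductive hypothesis applied to $\cM_k$ with the chain $\calZ'$ then says this induced functor is perverse with respect to the $\calZ'$-filtration of $\cM_k$, with codomain filtration $\langle\xi_{\calZ'}(-)\rangle$, perversity $i\mapsto\sum_{a\ge2}\mathrm{ht}(\lambda^i_a)$, and bijection $\xi_{\calZ'}$. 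Propagating this datum from the subquotients $D^b(\cM_k)$ up to $D^b(\cA)$—which I would do by applying Proposition~\ref{prop:prep} to the levels of the $\calZ'$-filtration one at a time, allowing the boundary subcategories to move at each step—yields that $\Theta_{w'}$ is perverse on $D^b(\cA)$ with respect to $\bigl(\cA(\bullet),\ \langle\xi_{\calZ'}(\cA(\bullet))\rangle,\ i\mapsto\sum_{a\ge2}\mathrm{ht}(\lambda^i_a)\bigr)$, with induced bijection $\xi_{\calZ'}$.

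Next I would analyse $\Theta_{I_1}$. By Corollary~\ref{cor:HLLY} (with $J=I_1$, $\mu=0$) it is perverse with respect to $(\cA_A(\bullet),\cA_A(\bullet),-)$ with bijection $\xi_{I_1}$, the perversity on the $k$-th subquotient being $\mathrm{ht}(\lambda^i_1)$ for any fine step $i$ inside $\cM_k$; in particular it is t-exact up to that shift on each $D^b(\cM_k)$. Because $I_a\subseteq I_1$ for $a\ge2$, each $\xi_{I_a}$ acts within $\fg_{I_a}$-crystal components, which lie inside single $\fg_{I_1}$-components, and $\xi_{I_a}$ preserves the zero weight space since $w_{I_a}(0)=0$; hence $\xi_{\calZ'}$ preserves each $\cA_A(k)$, so the filtration $\langle\xi_{\calZ'}(\cA(\bullet))\rangle$ produced above again coarsens to $\cA_A(\bullet)$. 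As $\Theta_{I_1}$ is t-exact up to shift on each $\cM_k$, Proposition~\ref{prop:prep} lets me refine its perversity datum along this finer filtration, giving $\Theta_{I_1}$ perverse with respect to $\bigl(\langle\xi_{\calZ'}(\cA(\bullet))\rangle,\ \langle\xi_{I_1}\xi_{\calZ'}(\cA(\bullet))\rangle,\ i\mapsto\mathrm{ht}(\lambda^i_1)\bigr)$ with bijection $\xi_{I_1}$. Now the codomain filtration of $\Theta_{w'}$ equals the domain filtration of $\Theta_{I_1}$, and for perverse equivalences with filtrations aligned in this way the composite is again perverse, with perversity the sum and bijection the composite; this produces $\Theta_w$ perverse with respect to $\bigl(\cA(\bullet),\ \langle\xi_{I_1}\xi_{\calZ'}(\cA(\bullet))\rangle,\ i\mapsto\mathrm{ht}(\lambda^i_1)+\sum_{a\ge2}\mathrm{ht}(\lambda^i_a)\bigr)$ with bijection $\xi_{I_1}\circ\xi_{\calZ'}$. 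Finally $\langle\xi_{I_1}\xi_{\calZ'}(\cA(i))\rangle=\langle\xi_\calZ(\bB(i))\rangle=\cA'(i)$, the perversity is $p$, and $\xi_{I_1}\circ\xi_{\calZ'}=\xi_{I_1}\circ\cdots\circ\xi_{I_r}=\xi_\calZ$, which is the assertion.

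The main obstacle is precisely the point made just before the theorem, that composites of perverse equivalences need not be perverse: the argument works only because the $\calZ$-filtration is engineered so that the outer factor $\Theta_{I_1}$ is \emph{t-exact up to shift}, not merely perverse, on the subquotients of the $I_1$-coarsening, which is what makes it possible both to refine $\Theta_{I_1}$'s perversity to the finer filtration carrying $\Theta_{w'}$'s perversity and to line the two filtrations up before composing. The most delicate part of the proof is thus the propagation step for $\Theta_{w'}$—an iterated application of Proposition~\ref{prop:prep} in which the boundary subcategories are themselves moved at each stage—together with the routine but ubiquitous verification that every $\xi_{I_a}$ respects the coarser filtrations and the zero weight space, which rests on the chain condition $I\supseteq I_1\supseteq\cdots\supseteq I_r$ and on $w_J(0)=0$.
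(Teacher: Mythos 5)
Your overall strategy coincides with the paper's: induct on $r$, factor $\Theta_w\cong\Theta_{I_1}\circ\Theta_{w'}$, exploit that $\Theta_{I_1}$ is t-exact up to shift on the subquotients of the $I_1$-coarsening (Corollary \ref{cor:HLLY}) together with Proposition \ref{prop:prep} to move the intermediate filtration terms, and then compose; your third paragraph and the final composition are exactly the paper's argument. The gap is in how you establish perversity of $\Theta_{w'}$ on $D^b(\cA)$ with respect to $\bigl(\cA(\bullet),\langle\xi_{\calZ'}(\cA(\bullet))\rangle,\sum_{a\ge2}\mathrm{ht}(\lambda^\bullet_a)\bigr)$. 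You apply the inductive hypothesis only to the subquotients $\cM_k$ of the $I_1$-coarsening and then ``propagate'' by iterated applications of Proposition \ref{prop:prep} ``allowing the boundary subcategories to move''. But Proposition \ref{prop:prep} requires the induced functor on $D^b_\cB(\cA)/D^b_\cD(\cA)$ to be t-exact up to a \emph{single} shift, and for $F=\Theta_{w'}$ with $\cD=\cA_A(k-1)$, $\cB=\cA_A(k)$ this fails as soon as $r\ge 3$: on that quotient $\Theta_{w'}$ is merely perverse, with different shifts on the different layers of the finer filtration, so the proposition's hypothesis (3) is not available; moreover the proposition as stated fixes the boundary categories, and the ``moving boundary'' variant you invoke is nowhere proved. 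Your step also leans on two further unproven compatibilities: that the autoequivalence induced by $\Theta_{w'}$ on $D^b_{\cA_A(k)}(\cA)/D^b_{\cA_A(k-1)}(\cA)$ is the Rickard complex of the quotient categorification $\cM_k$, and that this Verdier quotient may be identified t-compatibly with the bounded derived category of the corresponding Serre quotient (together with the mismatch between the $\fg_I$-weight-zero and $\fg_{I_1}$-weight-zero subcategories, which the paper handles separately in Corollary \ref{cor:HLLY}).

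The repair is the paper's much simpler move, which makes the propagation unnecessary: a $\calZ$-filtration of $\cC$ is in particular a $\calZ^u$-filtration for $\calZ^u=(I_2,\dots,I_r)$, so the inductive hypothesis applies \emph{directly to $\cA$ itself} with the chain $\calZ^u$, giving in one stroke that $\Theta_{w'}$ is perverse with respect to $(\cA(\bullet),\cA^u(\bullet),p^u)$ with bijection $\xi_{I_2}\circ\cdots\circ\xi_{I_r}$, where $\cA^u(i)=\langle\xi_{I_2}\circ\cdots\circ\xi_{I_r}(\bB(i))\rangle$. With that in hand, the remainder of your argument --- the identity $\cA^u(i_\ell)=\cA(i_\ell)$ on the coarse levels (since each $\xi_{I_a}$ preserves the $I_1$-blocks), Proposition \ref{prop:prep} applied to the chain $\cA(i_{\ell-1})\subset\cA^u(j)\subset\cA(i_\ell)\subset\cA$ with $F=\Theta_{I_1}$, and the composition of equivalences that are t-exact up to shifts $-p^u(j)$ and $-\mathrm{ht}(\lambda^j_1)$ on the fine subquotients --- is precisely the paper's proof.
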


\begin{proof}
We will prove the theorem by induction on $r$. If $r=1$ and $w=w_{I_1}$, then by Corollary \ref{cor:HLLY}, $\Theta_w$ is a perverse equivalence with respect to the $I_1$-filtration of $\cA$ on both the domain and codomain.  Moreover, the perversity function is given by the height of the highest weight corresponding to each subquotient.  This agrees with the perversity data in the statement of the theorem.  Note that Corollary \ref{cor:HLLY} also gives that  the associated bijection is $\xi_{I_1}$, completing the base case.
 
For $r>1$, let $\cT(i)$ and $\cT'(i)$ be the triangulated categories defined  from $\cA(i)$ and $\cA'(i)$ as in Section \ref{sec:pervdefs}.  Write $w=w_{I_1}u$ where $u=w_{I_2}\cdots w_{I_r}$.  Set $\calZ^u=(I_2,\hdots,I_r)$.  Note that the $\calZ$-filtration of $\cA$ is in particular also a $\calZ^u$-filtration.  Let $\bB^u(i):=\big< \xi_{I_2} \circ \cdots  \circ \xi_{I_r}\big(\bB(i)\big) \big>$, and define the associated categories $\cA^u(i), \cT^u(i)$ and the perversity function $p^u(i):=\sum_{a=2}^r \mathrm{ht}(\lambda^i_a)$.
By induction we have that $\Theta_u$ is perverse with respect to $(\cA(\bullet),\cA^u(\bullet),p^u)$, and $\varphi_{\Theta_u}=\xi_{I_2}\circ\cdots\circ\xi_{I_r}$.   

Suppose $$0=\cA(i_0) \subseteq \cA(i_1) \subseteq \cdots \subseteq \cA(i_t)=\cA$$ is a coarsening of the $\calZ$-filtration to a $I_1$-filtration.  First observe that $\cA^u(i_\ell)=\cA(i_\ell)$ for $1\leq \ell \leq t$.  Indeed, for any $1 \leq a \leq r$ we have that $\varphi_{I_a}(\bB(i_\ell))= \bB(i_\ell)$.  Therefore $\Theta_u(\cT(i_\ell))=\cT(i_\ell)$.  Since we also have that $\Theta_{I_1}(\cT(i_\ell))=\cT(i_\ell)$, it follows that the same is true for $\Theta_w$.

Now take $i_{\ell-1} < j < i_\ell$ and consider the chain $\cA(i_{\ell-1}) \subset \cA^u(j) \subset \cA(i_\ell) \subset \cA$.  We are in the setting of Section \ref{sec:prep}, where $F=\Theta_{I_1}$.  By Proposition \ref{prop:prep}, we deduce that $\Theta_{I_1}(\cT^u(j)) = \cT'(j)$.  Since we know by hypothesis that $\Theta_u(\cT(j)) = \cT^u(j)$, we conclude that $\Theta_w(\cT(j)) =\cT'(j)$.  This shows that $\Theta_w$ preserves the filtrations in the perversity data.

We next show that $\Theta_w$ is t-exact up to shifts (prescribed by $p$) on the subquotients of the filtration.  Take $j$ as in the previous paragraph.  By inductive hypothesis the functor
$$
\Theta_u: \cT(j) /\cT(j-1) \to \cT^u(j)/\cT^u(j-1)
$$
is t-exact up to shift $-p^u(j)$.  Moreover, by Proposition \ref{thm:HLLY2} the functor 
$$
\Theta_{I_1}: \cT(i_\ell)/\cT(i_\ell-1) \to  \cT(i_\ell)/\cT(i_\ell-1)
$$
is t-exact up to shift $-\mathrm{ht}(\lambda^j_1)$.  The restriction of this functor to  $\cT^u(j)/\cT^u(j-1)$ induces an equivalence 
$
\cT^u(j)/\cT^u(j-1) \to \cT'(j)/\cT'(j-1)
$
which is also t-exact up to shift $-\mathrm{ht}(\lambda^j_1)$.  Therefore the composition $\Theta_w=\Theta_{I_1} \circ \Theta_u$ induces an equivalence $\cT(j) /\cT(j-1) \to \cT'(j)/\cT'(j-1)$ which is t-exact up shift $-p(j)=-\mathrm{ht}(\lambda^j_1)-p^u(j)$.

Notice that this also shows that the induced bijection $\BB(j)\setminus\BB(j-1) \to \BB'(j)\setminus\BB'(j-1)$ is  given by $\xi_{\calZ}$, since in the composition of t-exact functors (up to shift) $\Theta_{I_1} \circ \Theta_u$, $\Theta_u$ gives rise to $\xi_{I_2}\circ\cdots\circ\xi_{I_r}$ and $\Theta_{I_1}$ gives rise to $\xi_{I_1}$.
\end{proof}
	
\section{Combinatorial applications}\label{sect:crystal}

 \subsection{The general set-up}\label{sect:crystal1} We are now in position to deduce combinatorial consequences from Theorem \ref{thm:mainthm}.  First we describe a general schema for extracting  combinatorial information about the action of separable elements on canonical bases.  We will then apply this to study two situations: the Kazhdan--Lusztig basis of Specht modules in Section \ref{sect:KL}, and the  canonical bases in the zero weight space of tensor product representations in Section \ref{sect:tensor-product}.  
 
To describe the general schema, let $U$ be a representation of $W$ equipped with a  basis $\bB \subset U$.  We suppose that the pair $(U,\bB)$ arises via categorification in the following sense: there is a categorical representation $\cC$ of $U(\fg)$, and an isomorphism of $W$-modules
 \begin{align}\label{eq:Wmods}
 U \cong [\cC_0]_{\CC},
 \end{align}
 such that the set $\bB$ is identified with the set of isomorphism-classes of simple objects in $\cC_0$.
 
 We note that the $W$-module structure on $[\cC_0]_{\CC}$ is a consequence of the $U(\fg)$-module structure on $[\cC]_\CC$, and moreover, the $W$-action is categorified by the Rickard complexes. Indeed, the action of $s_i$ on the zero weight space of a $U(\fg)$-module is well-known to be given by 
 \begin{align}\label{eq:actofs_i}
     s_i=\sum_{k \geq 0}(-1)^k e_i^{(k)}f_i^{(k)},
 \end{align}
where $x^{(k)}=x^k/k!$ is the divided-power.

\begin{Remark}
    It is possible to develop our theory also for other weight spaces, and deduce results about the action of a cover of the Weyl group (the ``Tits group'') on canonical basis elements in nonzero weight spaces.  We've opted to focus on the zero weight space for simplicity.
\end{Remark}

\subsection{The action of a separable element\texorpdfstring{ on $\BB$}{}}\label{sec:sep-elts} Let $\SS$ be the $I$-crystal arising from the categorical representation $\cC$.  Recall that the crystals which arise in this manner are ``normal'', i.e.\ they are isomorphic to a direct sum of irreducible crystals $\LL(\lambda)$ (for $\lambda\in X_{I}^+$).
For $b\in\SS$, we let $\SS[b]$ be the connected component of $\SS$ containing $b$ and let $\tau_I(b)\in X_{I}^+$ be the  highest weight of $\SS[b]$, i.e.\ $b \in \Iso_{\la}(\SS)$ if and only if $\tau_I(b)=\la$.

We begin by defining a preorder $\leq_{I}$ on $\SS$ (and hence on $\BB=\SS_0$), whose linearisations will correspond to $I$-filtrations. We then extend this to chains $\calZ$.

\begin{Definition}\label{def:leq_I}
For $x,y\in\SS$, let $x\leq_I y$ if and only if $\tau_I(x)\preceq_I\tau_I(y)$. We call this the $I$-preorder on $\SS$.
\end{Definition}

Note that two elements are equivalent in this preorder if and only if they lie in the same maximal isotypic component of $\SS$, and we write $x\sim_I y$. For $J\subseteq I$  we  define $\leq_{J}$ on $\SS$ by checking the relation in $\Res_{J}\SS$. 

Fix a chain $\calZ=(I_1,\dots,I_r)$ with $I\supseteq I_1\supseteq \cdots \supseteq I_r$.

\begin{Definition}\label{def:leq_Z}
If $r=1$, define $\leq_{\calZ}$ equivalently to $\leq_{I_1}$. Otherwise, for $x,y\in\SS$, set $x\leq_{\calZ}y$ if and only if $x<_{I_1}y$, or $x\sim_{I_1}y$ and $x\leq_{(I_2,\dots,I_r)}y$.
\end{Definition}

Note that $x<_{\calZ}y$ if and only if $x <_{I_a} y$ for the minimal $a$ such that $x \not\sim_{I_a} y$.  If $x\sim_{I_a}y$ for all $1\leq a \leq r$, then $x$ and $y$ are equivalent under $\leq_{\calZ}$ and we write $x\sim_{\calZ}y$. 

The $\sim_{\calZ}$ equivalence classes are labelled by sequences $\bla=(\la_1,\hdots,\la_r)\in X_{I_1}^+\times\cdots\times X_{I_r}^+$, where $b\in \SS$ is in the equivalence class labelled by $(\tau_{I_1}(b),\hdots,\tau_{I_r}(b))$.

Let $\SS(\bla)$ denote the $\bla$-equivalence class in $\SS$, and let 
\begin{align}\label{eq:Zeqcl}
    \SS=\SS(\bla_1)\sqcup \cdots \sqcup \SS(\bla_n)
\end{align}
be the decomposition into equivalence classes.  

For $\bla$ as above, for $1\le a\le r$, set $\bla|_a=(\la_1,\hdots,\la_a)$.  
\begin{Definition}\label{def:nor}
    The list of $\sim_{\calZ}$ equivalence classes $(\bla_1,\hdots,\bla_n)$ is \emph{normally ordered} if 
    \begin{enumerate}
        \item $\bla_i <_{\calZ} \bla_j \Longrightarrow i<j$, and
        \item if $i<j$ and $(\bla_i)|_a=(\bla_j)|_a$ then $(\bla_i)|_a=(\bla_k)|_a$ for all $i\leq k\leq j$.
    \end{enumerate}
\end{Definition}

A normal ordering can be constructed by restricting the crystal $\SS$ along $\calZ$, and arranging the isotypic components along the way. More precisely, first consider the isotypic decomposition of $\SS$ regarded as an $I_1$-crystal $\SS=\Iso_{\la_1}(\SS)\sqcup \cdots \sqcup \Iso_{\la_N}(\SS)$, with the dominant weight arranged so that if $\la_i \prec_{I_1} \la_j$ then $i<j$.  Then for each component, consider its decomposition as an $I_2$-crystal:
\begin{align*}
\Iso_{\la_i}(\SS)=\Iso_{\la_i,\mu_1}(\SS)\sqcup \cdots \sqcup \Iso_{\la_i,\mu_{N_i}}(\SS),
\end{align*}
where $\Iso_{\la_i,\mu_k}(\SS):=\Iso_{\mu_k}(\Iso_{\la_i}(\SS))$ and the weights arranged so that if $\mu_k \prec_{I_2} \mu_\ell$ then $k<\ell$.  Continuing in this way down the chain $\calZ$, at the final step we obtain a normally ordered list of the $\sim_\calZ$ equivalence classes.

Let $\cC(i)$ be the Serre subcategory of $\cC$ generated by $\SS(\bla_1),\hdots,\SS(\bla_i)$, and let $\cC(\bullet)$ denote the associated filtration.

\begin{Lemma}\label{lem:ordering-to-filtration}
Let $\cC$ be a categorical representation of $U(\fg)$ with  crystal $\SS$.  Let $(\bla_1,\hdots,\bla_n)$ be a normally ordered list of the $\sim_\calZ$ equivalence classes of $\SS$, and let $\cC(\bullet)$ be the resulting filtration.  Then $\cC(\bullet)$ is a $\calZ$-filtration.  
\end{Lemma}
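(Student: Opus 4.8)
The plan is to prove this by induction on the length $r$ of the chain $\calZ$, mirroring the inductive structure of Definition~\ref{def:I-filts}. First I would set up the base case $r=1$: here a normally ordered list is just an ordering of the $I_1$-isotypic components $\Iso_{\la_1}(\SS),\hdots,\Iso_{\la_n}(\SS)$ compatible with $\prec_{I_1}$, and the resulting filtration is exactly the ordered $I_1$-filtration constructed in the paragraph after Definition~\ref{def:I-filt} (following \cite[Remark 3.9]{HLLY}). So the base case is immediate from that construction, noting that the Serre subcategory generated by $\SS(\bla_1),\hdots,\SS(\bla_i)$ is precisely $\cC(i)$ in the cited construction.

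For the inductive step, suppose $r>1$ and write $\calZ^u=(I_2,\hdots,I_r)$. The key observation is that condition (2) in Definition~\ref{def:nor} of a normal ordering is exactly what allows the list to be coarsened compatibly: group the $\sim_\calZ$ equivalence classes $(\bla_1,\hdots,\bla_n)$ into blocks according to the value of $\bla_k|_1 = \la_1$, i.e.\ according to which $I_1$-isotypic component of $\SS$ they lie in. Condition (2) with $a=1$ guarantees these blocks are consecutive, and condition (1) guarantees they are ordered compatibly with $\prec_{I_1}$. Let $A=\{i_0,\hdots,i_\ell\}$ be the index set of the block boundaries. Then I would argue that $\cC_A(\bullet)$ is precisely the ordered $I_1$-filtration of $\cC$ determined by this ordering of the $I_1$-isotypic components — again by the $r=1$ case applied to the coarsened data — so $\cC(\bullet)$ can be coarsened to an $I_1$-filtration as required by Definition~\ref{def:I-filts}.

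It then remains to check that the induced filtration on each subquotient $\cC_A(k)/\cC_A(k-1)$ is a $\calZ^u$-filtration. The subquotient $\cC_A(k)/\cC_A(k-1)$ is a categorical representation of $\fg_{I_1}$; restricting to $\fg_{I_2}$, its crystal is the $I_1$-isotypic component $\Iso_{\la}(\SS)$ (for the appropriate $\la=\la_1$) regarded as an $I_2$-crystal, and the induced filtration has terms generated by those $\SS(\bla_k)$ lying in this block. Within this block the classes $\bla_k$ are distinguished exactly by their truncations $\bla_k|_2,\hdots,\bla_k|_r$, i.e.\ they are the $\sim_{\calZ^u}$ equivalence classes of $\Iso_\la(\SS)$, and conditions (1) and (2) of Definition~\ref{def:nor} restricted to this block say precisely that this sublist is a normal ordering of the $\sim_{\calZ^u}$ classes with respect to the chain $\calZ^u$. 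Hence by the inductive hypothesis applied to the categorical $\fg_{I_2}$-representation $\cC_A(k)/\cC_A(k-1)$, the induced filtration is a $\calZ^u$-filtration, which completes the induction.

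I expect the main obstacle to be bookkeeping rather than conceptual depth: one must carefully verify that the Serre subcategory generated by a union of $\sim_\calZ$ equivalence classes behaves well under passage to Serre subquotients — specifically that the simple objects of $\cC_A(k)/\cC_A(k-1)$ are exactly the classes in the $k$-th block, and that the filtration induced on this subquotient is generated by the expected sub-collection — and that restriction of the crystal along $\fg_{I_1}\supseteq\fg_{I_2}$ intertwines all of this correctly. These are the kinds of compatibility statements that are routine but need to be stated cleanly; the substantive input (existence of ordered isotypic filtrations) is entirely imported from \cite{HLLY} via the $r=1$ case.
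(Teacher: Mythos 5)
Your proposal is correct and follows essentially the same argument as the paper: induction on the length of $\calZ$, with the base case coming from the ordered $I_1$-filtration construction of \cite[Remark 3.9]{HLLY}, the coarsening $\cC_A(\bullet)$ defined by grouping the $\sim_\calZ$ classes into consecutive blocks according to $\bla|_1$ (justified by the two conditions of Definition~\ref{def:nor}), and the inductive hypothesis applied to each subquotient $\cC_A(k)/\cC_A(k-1)$ viewed as a categorical representation of $\fg_{I_2}$ with the chain $(I_2,\hdots,I_r)$. The bookkeeping points you flag (simples of the subquotient being exactly the $k$-th block, compatibility with crystal restriction) are exactly the steps the paper treats as implicit.
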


\begin{proof}
    We prove this by induction on $r$.  If $r=1$ then by the discussion in Section \ref{sec:catrepth},  $\cC(\bullet)$ is a $I_1$-filtration.  Suppose now $r>1$. We can break up the list \eqref{eq:Zeqcl} into segments:
\begin{align*}
    &\SS(\bla_1),\hdots,\SS(\bla_{i_1}) \\
    &\SS(\bla_{i_1+1}),\hdots,\SS(\bla_{i_2})\\
    &\hspace{1.5cm}\vdots \\
    &\SS(\bla_{i_\ell+1}),\hdots,\SS(\bla_n)
\end{align*}
where $(\bla_1)|_1=(\bla_2)|_1=\cdots=(\bla_{i_1})|_1$, $(\bla_{i_1+1})|_1=\cdots=(\bla_{i_2})|_1$, etc.  Let $A=\{i_1,\hdots,i_\ell\}$.  Then by the same discussion in Section \ref{sec:catrepth}, $\cC_A(\bullet)$ is an $I_1$-filtration.

Consider a subquotient $\cC_A(k)/\cC_A(k-1)$ of $\cC_A$ for some $1\leq k\leq \ell$. This is an isotypic categorification of highest weight $\lambda:=\bla_{i_k}|_1 \in X_{I_1}^+$.  Let $\calZ'=(I_2,\hdots,I_r)$.  The induced filtration on this subquotient is then obtained from a normal ordering of the $\sim_{\calZ'}$ equivalence classes of the crystal for $\cC_A(k)/\cC_A(k-1)$.  By induction this is a $\calZ'$-filtration, so by definition $\cC(\bullet)$ is a $\calZ$-filtration.
\end{proof}

We return to the general set-up described in Section \ref{sect:crystal1}: we have a based representation $(U,\BB)$ of $W$ which arises from the categorical representation $\cC$ of $U(\fg)$.

\begin{Theorem}\label{thm:gencomb}
Let $w \in W^\mathrm{sep}$.  Then $w$ acts on $(U,\BB,\leq_\calZ)$ by $\xi_\calZ$ up to l.o.t., where $\calZ=(I_1,\hdots,I_r)$ is a chain such that $w:=w_{I_1}\cdots w_{I_r}$.
\end{Theorem}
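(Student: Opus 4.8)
The plan is to realise the action of $w$ on $U$ as the operator induced on a Grothendieck group by the Rickard complex $\Theta_w$, and then to read off the lower-order-terms statement from the perverse structure of $\Theta_w$ supplied by Theorem~\ref{thm:mainthm}. Under the identification \eqref{eq:Wmods} we have $\cA=\cC_0$ and $\BB$ is the set of simple objects of $\cA$. Since every simple reflection fixes $0\in X$, each $\Theta_i$ restricts to an endofunctor of $D^b(\cC_0)$, and in weight zero its terms are $(\Theta_i)^{-r}=\sE_i^{(r)}\sF_i^{(r)}\sone_0$; hence $[\Theta_i]$ acts on $[\cA]_\CC$ by $\sum_{r\ge 0}(-1)^r e_i^{(r)}f_i^{(r)}$, which is exactly the formula \eqref{eq:actofs_i} for $s_i$. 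Composing over a reduced word, $[\Theta_w]=w$ as operators on $[\cA]_\CC$, so it suffices to analyse $[\Theta_w]$.

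First I would fix a normally ordered list $(\bla_1,\dots,\bla_n)$ of the $\sim_\calZ$-equivalence classes of $\SS$ — such a list exists by the restriction-and-arrange procedure described after Definition~\ref{def:nor} — and take the resulting filtration $\cC(\bullet)$, which is a $\calZ$-filtration by Lemma~\ref{lem:ordering-to-filtration} and hence induces a $\calZ$-filtration $\cA(\bullet)$ of $\cA$. Theorem~\ref{thm:mainthm} then applies: $\Theta_w$ is a perverse equivalence with respect to $(\cA(\bullet),\cA'(\bullet),p)$ with associated bijection $\xi_\calZ$. Feeding this into Lemma~\ref{lem:lot-general} shows that $[\Theta_w]=w$ acts on $([\cA]_\CC,\BB,\le)$ by $\xi_\calZ$ up to lower-order terms, where $\le$ is the total preorder in which $x\le y$ iff the block $\SS(\bla_i)$ containing $x$ has index at most that of the block containing $y$. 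By construction the blocks of $\le$ are precisely the $\sim_\calZ$-classes, so the equivalence relation underlying $\le$ is $\sim_\calZ$, and because the list is normally ordered, $x<_\calZ y$ implies $x<y$.

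The remaining work is to replace $\le$ by the (coarser, in general partial) preorder $\leq_\calZ$ appearing in the statement, i.e.\ to show that in the expansion $w(x)=\pm\xi_\calZ(x)+\sum_{y<x}a_y\,\xi_\calZ(y)$ one has $a_y=0$ whenever $y\not<_\calZ x$. The point I would exploit is that the coefficients $a_y$ depend only on $w$ and $\BB$, not on the chosen normal ordering, while there is genuine freedom in that choice. Given $y$ with $y\not<_\calZ x$, one can select a normal ordering in which the block of $y$ does not precede that of $x$: this is automatic if $x\sim_\calZ y$ (same block) or if $y>_\calZ x$ (forced by condition~(1) of Definition~\ref{def:nor}); and if $x$ and $y$ are $\leq_\calZ$-incomparable, then letting $a$ be least with $x\not\sim_{I_a}y$, the weights $\tau_{I_a}(x)$ and $\tau_{I_a}(y)$ are $\preceq_{I_a}$-incomparable, so the linear extension used at level $a$ in the construction may be chosen to place $\tau_{I_a}(y)$ after $\tau_{I_a}(x)$, and the contiguity condition~(2) then keeps $y$'s block after $x$'s. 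Applying Lemma~\ref{lem:lot-general} with such an ordering forces $a_y=0$; running over all such $y$, the expansion of $w(x)$ involves only $y<_\calZ x$. Finally, in any fixed normal ordering the sign of $\xi_\calZ(x)$ depends only on the block of $x$, hence only on its $\sim_\calZ$-class, so the sign condition of Definition~\ref{def:lot} holds for $\leq_\calZ$ as well, completing the argument.

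I expect the only delicate point to be this last step: the perverse-equivalence formalism naturally outputs a \emph{total} preorder (the linearisation attached to one ordered filtration), whereas the theorem asks for the intrinsic partial preorder $\leq_\calZ$, and bridging the two relies on the ordering-independence of the structure constants together with the flexibility in choosing the normal ordering. Everything else is a formal assembly of Theorem~\ref{thm:mainthm}, Lemma~\ref{lem:lot-general} and Lemma~\ref{lem:ordering-to-filtration}.
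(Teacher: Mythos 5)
Your proposal follows the paper's own route: choose a normal ordering of the $\sim_\calZ$-classes, invoke Lemma \ref{lem:ordering-to-filtration} to get a $\calZ$-filtration, apply Theorem \ref{thm:mainthm} to obtain perversity of $\Theta_w$ with associated bijection $\xi_\calZ$, and descend to the Grothendieck group via Lemma \ref{lem:lot-general} together with the identity $[\Theta_w]=w$ on $[\cC_0]_\CC$. The one place you go beyond the paper is the final step: Lemma \ref{lem:lot-general} a priori yields lower-order terms only with respect to the total preorder attached to the chosen filtration, i.e.\ a linearisation of $\leq_\calZ$, whereas the theorem is stated for the intrinsic (generally partial) preorder $\leq_\calZ$; the paper's proof simply applies Lemma \ref{lem:lot-general} and stops, leaving this passage implicit. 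Your bridge is correct and worth making explicit: the coefficient of $\xi_\calZ(y)$ in $w(x)$ is intrinsic to $w$ and $\BB$, hence independent of the chosen normal ordering, and for any $y\not<_\calZ x$ one can arrange a normal ordering in which the block of $y$ does not strictly precede that of $x$ (automatic when $y\sim_\calZ x$ or $y>_\calZ x$ by Definition \ref{def:nor}, and achievable in the incomparable case because the highest weights at the first level of disagreement are dominance-incomparable, so the linear extension at that level can be chosen either way), which forces that coefficient to vanish; the sign condition transfers because the filtration blocks are exactly the $\sim_\calZ$-classes. So your argument is a faithful, and slightly more careful, version of the paper's proof rather than a different one.
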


\begin{proof}
We consider a normal ordering of $\SS$, and the resulting filtration $\sC$ as defined above Lemma \ref{lem:ordering-to-filtration}. By the Lemma this is a $\calZ$-filtration.  Let $\cA(\bullet)$ be the associated $\calZ$-filtration of $\cA=\cC_0$. By Theorem \ref{thm:mainthm}, $\Theta_w:D^b(\cA)\to D^b(\cA)$ is a perverse equivalence with respect to $\cA(\bullet)$ and $\cA'(\bullet)$ (the latter filtration is defined above Theorem \ref{thm:mainthm}), and the associated bijection is $\xi_\calZ$.  The result now follows from Lemma \ref{lem:lot-general}.
\end{proof}

\begin{Remark}\label{rmk:cactus}
    The bijections which arise in this manner are closley related to the cactus group $C_I$ \emph{\cite{HK}}.  This is a group generated by $c_J$, where $J \subseteq I$ is a connected subdiagram, satisfying the following relations:
    \begin{enumerate}
    \item $c_J^2=1$, for all $J \subseteq I$,
    \item $c_Jc_K=c_Kc_J$ if $J \cap K =\emptyset$ and there is no edge connecting any $j \in J$ to any $k \in K$,
    \item $c_Jc_K= c_Kc_{\theta_K(J)}$ if $J \subseteq K$.
    \end{enumerate}
    The assignment $c_J \mapsto \xi_J$ defines an action of $C_I$ on $\BB$ \emph{\cite[Theorem 7.7]{HLLY}}.
\end{Remark}

\subsection{The action of separable elements on the dual basis}\label{sec:dualbasis}
In this section we describe how to obtain an analogue of Theorem \ref{thm:gencomb} for a basis dual to $\BB$.
Indeed, in  our  set-up \eqref{eq:Wmods} the basis $\BB$ is often  Lusztig's ``dual canonical basis''.  Moreover, the representation $U$ is  equipped with a non-degenerate symmetric form $(\cdot,\cdot):U \times U \to \CC$, and the basis $\BB'$ dual to $\BB$ with respect to $(\cdot,\cdot)$ is the ``canonical basis'' (cf.\ Section \ref{sect:tensor-product}).  

For example, consider the case where $U$ is the zero weight space of an irreducible representation $L(\la)$ with a fixed highest weight vector $v_\la$.  The category $\cC_0$ is equivalent to $A\mmod$ (the category of finite dimensional $A$-modules) for an appropriate algebra $A$ (cf.\ Section \ref{sect:specht-categorification}).  Then $\BB$ corresponds to simple $A$-modules, and is Lusztig's dual canonical basis. 

On the other hand, $U$ is also categorified by $A\Pmod$ (the category of finitely generated projective  $A$-modules) and in this model the projective indecomposable modules give rise to a different basis $\BB'$ (the canonical basis).  These bases are orthogonal with respect to the  symmetric bilinear form descended from the hom-pairing on $\cC_0$: for $X\in A\Pmod, Y\in A\mmod$, $([X],[Y])=\dim\Hom(X,Y)$.  Note that this is precisely the 
 Shapavalov form and satisfies the properties:
\begin{enumerate}
    \item $(v_\la,v_\la)=1$,
    \item $(e_i\cdot u,v)=(u, f_i \cdot v)$ for all $i\in I$ and any $u,v\in U$.
\end{enumerate}

To state our  result, let $V=[\cC]_\CC$ be the representation of $U(\fg)$ categorified by $\cC$ (so that $U=V_0$).
Note that we don't require $V$ to be irreducible (e.g. $V$ could be a tensor product of irreducible representations, cf.\ Section \ref{sect:tensor-product}).
Suppose that $V$ is equipped with a non-degenerate symmetric form satisfying Property (2) above.
We let $(\cdot,\cdot)$ denote the restriction of this form to $U$.

Write $\BB=\{x_\bt \mid \bt \in \cX\}$ for the basis of $U$ arising from simple objects under \eqref{eq:Wmods}, indexed by a set $\cX$.  Let $\BB'=\{x_\bt'\mid \bt \in \cX\}$ be the dual basis with respect to $(\cdot,\cdot)$.  

Given a chain of diagrams $\calZ=(I_1,\hdots,I_r)$, define: $\xi_\calZ(\bt)=\bs$ if and only if $\xi_\calZ(x_\bt)=x_\bs$, and $\bt \leq_\calZ \bs$
if and only if $x_\bt \leq_\calZ x_\bs$.

Let $w:=w_{I_1}\cdots w_{I_r}$ be the separable element corresponding to $\calZ$, and write $\xi=\xi_\calZ$.

\begin{Corollary}\label{cor:gencomb}
Let $\bt\in\cX$.  Then there exist $b_{\br}\in\ZZ$ such that
\[w\cdot x_\bt'=\pm x'_{\xi^{-1}(\bt)}+\sum_{\xi^{-1}(\br)\;>_\calZ\;\xi^{-1}(\bt)}b_{\br}x'_{\xi^{-1}(\br)}.\]
\end{Corollary}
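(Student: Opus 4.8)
The plan is to deduce the statement for the dual basis $\BB'$ from Theorem~\ref{thm:gencomb} by a purely formal argument, using the compatibility of the $W$-action with the form $(\cdot,\cdot)$. First I would record the relevant adjointness property of $w$. Since the form satisfies $(e_i\cdot u, v)=(u, f_i\cdot v)$, and $s_i=\sum_k (-1)^k e_i^{(k)}f_i^{(k)}$ acts on the zero weight space, a short computation shows that each $s_i$ (hence every $w\in W$) is self-adjoint on $U$: $(w\cdot u, v) = (u, w^{-1}\cdot v)$. (Concretely, $s_i^\ast = s_i$ because $(e_i^{(k)}f_i^{(k)})^\ast = f_i^{(k)}e_i^{(k)}$ and one rearranges the sum; alternatively, $W$ acts orthogonally on a real form of $U$.) Therefore, for $w = w_{I_1}\cdots w_{I_r}$, we have $w^\ast = w^{-1} = w_{I_r}\cdots w_{I_1}$, which is the separable element corresponding to the reversed chain $\calZ^{\mathrm{op}} = (I_r,\hdots,I_1)$. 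The key point is that $w^{-1}$ is again separable, so Theorem~\ref{thm:gencomb} applies to it.

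Next I would apply Theorem~\ref{thm:gencomb} to $w^{-1}$: it acts on $(U,\BB,\leq_{\calZ^{\mathrm{op}}})$ by the bijection $\xi_{\calZ^{\mathrm{op}}}$ up to lower-order terms. One must check that $\xi_{\calZ^{\mathrm{op}}} = \xi_\calZ^{-1}$; this follows since each $\xi_{I_a}$ is an involution (stated in the excerpt) and $\xi_{\calZ^{\mathrm{op}}} = \xi_{I_r}\circ\cdots\circ\xi_{I_1} = (\xi_{I_1}\circ\cdots\circ\xi_{I_r})^{-1} = \xi_\calZ^{-1}$. Similarly, one checks $\leq_{\calZ^{\mathrm{op}}}$ is related to $\leq_\calZ$ appropriately — but in fact we do not need to identify the two preorders exactly; all that matters is that the l.o.t.\ relation for $w^{-1}$ is with respect to \emph{some} preorder refining the grading coming from the $\calZ^{\mathrm{op}}$-filtration, and we will transpose this below.

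Then comes the transposition step. Writing $w^{-1}\cdot x_\bt = \pm x_{\xi^{-1}(\bt)} + (\text{lower terms})$ and expanding in the basis $\BB$, the matrix of $w^{-1}$ with respect to $\BB$ is, after reordering $\BB$ by a linear extension of $\leq_{\calZ^{\mathrm{op}}}$, lower-triangular with the permutation $\xi^{-1}$ (times signs) on the "diagonal". Taking the adjoint with respect to $(\cdot,\cdot)$ and using $w = (w^{-1})^\ast$ together with the fact that $\BB'$ is the dual basis of $\BB$: the matrix of $w$ in the basis $\BB'$ is the transpose of the matrix of $w^{-1}$ in the basis $\BB$. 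Transposing a lower-triangular matrix (in the $\leq_{\calZ^{\mathrm{op}}}$ order) gives an upper-triangular matrix, whose off-diagonal entries in row $\bt$ are supported on indices $\br$ with $\xi^{-1}(\br) >_\calZ \xi^{-1}(\bt)$ — here I would translate the ordering condition carefully, using that $\xi_\calZ$ intertwines $\leq_\calZ$ with $\leq_{\calZ^{\mathrm{op}}}$ in the appropriate way, i.e.\ $\xi_\calZ$ is an order-reversing-to-order-preserving comparison between the two filtrations. The diagonal entry at $\bt$ is $\pm$ the entry of $w^{-1}$ at $(\xi^{-1}(\bt), \bt)$, giving the leading term $\pm x'_{\xi^{-1}(\bt)}$. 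This yields exactly the claimed expansion.

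The main obstacle is the bookkeeping in the last step: getting the index $\xi^{-1}$ (rather than $\xi$) and the summation range $\xi^{-1}(\br) >_\calZ \xi^{-1}(\bt)$ to come out correctly requires being careful about (i) which of $w$ and $w^{-1}$ carries which bijection, (ii) the interaction between taking adjoints/transposes and reversing the chain $\calZ$, and (iii) the precise relationship between the preorders $\leq_\calZ$ and $\leq_{\calZ^{\mathrm{op}}}$ under $\xi_\calZ$. I would settle this by working on the level of filtrations directly: $w^{-1}$ being perverse with respect to $(\cA(\bullet), \cA'(\bullet))$ (in the notation preceding Theorem~\ref{thm:mainthm}, for the chain $\calZ^{\mathrm{op}}$) means the matrix of $w^{-1}$ respects these Serre-subcategory filtrations; dualizing turns the filtration $\cA(\bullet)$ into the "quotient filtration" on $U$ dual to it, and a simple-object/projective-object duality argument (as sketched in the excerpt around the pairing $([X],[Y]) = \dim\Hom(X,Y)$) identifies the resulting triangular structure on $\BB'$ with the one in the statement. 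Everything else is formal linear algebra (cf.\ the QR-decomposition reformulation in Lemma~\ref{lem:QR}).
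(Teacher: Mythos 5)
Your overall strategy diverges from the paper's at the very first step, and the divergence is where the trouble lies. The paper's proof never looks at $w^{-1}$: it rests on the self-adjointness Lemma recorded next to the corollary, namely that every $w\in W$ satisfies $(w\cdot u,v)=(u,w\cdot v)$ for this form, and then the entire argument is the one-line computation $b_{\br\bt}=(x_{\xi^{-1}(\br)},w\cdot x'_\bt)=(w\cdot x_{\xi^{-1}(\br)},x'_\bt)=a_{\xi^{-1}(\bt)\,\xi^{-1}(\br)}$, after which the triangularity is read off from Theorem~\ref{thm:gencomb} applied to $w$ itself. You instead use the compatibility $(w\cdot u,v)=(u,w^{-1}\cdot v)$, which is not the paper's Lemma, and this forces you through the action of $w^{-1}$ on $\BB$. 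That detour creates a genuine gap: you invoke Theorem~\ref{thm:gencomb} for $w^{-1}$ via the ``reversed chain'' $(I_r,\dots,I_1)$, but this is not a chain in the paper's sense (chains must be decreasing, $I_1\supseteq\cdots\supseteq I_r$), and all of the machinery behind the theorem ($\calZ$-filtrations, Theorem~\ref{thm:mainthm}) is established only for decreasing chains. Knowing abstractly that $w^{-1}\in W^{\mathrm{sep}}$ does not tell you that the bijection attached to it is $\xi_\calZ^{-1}$, nor what its preorder is, and the support of the lower-order terms is exactly what the corollary asserts, so it cannot be left vague. (This particular step is repairable without any new use of the theorem: inverting the ``signed permutation times unitriangular'' matrix that Theorem~\ref{thm:gencomb} gives for $w$ yields directly $w^{-1}\cdot x_\bu=\pm x_{\xi^{-1}(\bu)}+\sum_{\bt<_\calZ\xi^{-1}(\bu)}c_\bt x_\bt$.)

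The second, more serious problem is the transposition bookkeeping, which you yourself flag as the main obstacle and then settle only by a vague appeal to filtrations. With your compatibility, the coefficient of $x'_\bv$ in $w\cdot x'_\bt$ equals $(x'_\bt,w^{-1}\cdot x_\bv)$, i.e.\ the coefficient of $x_\bt$ in $w^{-1}\cdot x_\bv$; by the expansion above this is $\pm1$ precisely when $\bt=\xi^{-1}(\bv)$, that is $\bv=\xi(\bt)$, and it can be nonzero only when $\bt<_\calZ\xi^{-1}(\bv)$. So your route, carried out correctly, lands on $w\cdot x'_\bt=\pm x'_{\xi(\bt)}+\sum_{\xi^{-1}(\bv)>_\calZ\bt}c_\bv x'_\bv$, whose leading index is $\xi(\bt)$ rather than $\xi^{-1}(\bt)$ as in the statement; since $\xi_\calZ$ does not in general preserve $\le_\calZ$ (promotion, for instance, does not preserve the row of the $n$-box), this is genuinely a different formula. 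The entry of $w^{-1}$ at position $(\xi^{-1}(\bt),\bt)$ that you point to is the coefficient of $x'_\bt$ in $w\cdot x'_{\xi^{-1}(\bt)}$, i.e.\ it sits in a different column of the matrix you are transposing, so it does not produce the claimed leading term of $w\cdot x'_\bt$. In short, with only $(w\cdot u,v)=(u,w^{-1}\cdot v)$ your argument does not reach the formula as stated; the stated formula is obtained in the paper from the self-adjointness Lemma $(w\cdot u,v)=(u,w\cdot v)$, which is precisely the ingredient your proposal replaces, so you would either need to prove that Lemma (as the paper does) or reconcile the resulting discrepancy in the leading index and the summation range before the proof can be considered complete.
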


\begin{Lemma}
    Any $w\in W$ is self-adjoint with respect to $(\cdot,\cdot):U\times U \to \CC$.
\end{Lemma}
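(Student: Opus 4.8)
The plan is to reduce the claim to the generators $s_i$ of $W$, and for each generator use the explicit formula \eqref{eq:actofs_i} together with the bi-adjointness of $(\sE_i,\sF_i)$ — equivalently, Property (2) of the form $(\cdot,\cdot)$. First I would observe that self-adjointness is preserved under composition: if $u,v \in W$ are each self-adjoint, then for all $x,y \in U$ we have $(uv \cdot x, y) = (v\cdot x, u^{-1}\cdot y)$ only if $u$ is also \emph{co}-self-adjoint; so more carefully, $(uv\cdot x,y) = (u\cdot(v\cdot x),y) = (v\cdot x, u\cdot y) = (x, v\cdot(u\cdot y)) = (x, vu\cdot y)$. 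Since $W$ is generated by involutions $s_i$, and for an involution self-adjointness gives $(s_i\cdot x, y) = (x, s_i\cdot y)$, it suffices to check that each $s_i$ acts self-adjointly; the general case then follows because any $w$ can be written as a product of simple reflections and one checks $(w\cdot x,y)=(x,w^{-1}\cdot y)=(x,w\cdot y)$ when each factor is a self-adjoint involution (using that the $s_i$ all fix the weight-zero space setwise and $w^{-1}$ is again a product of the same generators in reverse order — the parity/ordering bookkeeping is routine).

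The core computation is therefore: on the zero weight space, $s_i = \sum_{k\geq 0}(-1)^k e_i^{(k)} f_i^{(k)}$, and I must show $(s_i\cdot x, y) = (x, s_i\cdot y)$. Using Property (2) of the form repeatedly, $(e_i^{(k)}f_i^{(k)}\cdot x, y) = (f_i^{(k)}\cdot x, f_i^{(k)}\cdot y)$, where I use $(e_i\cdot u, v) = (u, f_i\cdot v)$ together with the fact that the divided powers are polynomials in $e_i$ (resp. $f_i$) with rational coefficients, so $(e_i^{(k)}\cdot u, v) = (u, f_i^{(k)}\cdot v)$. Hence $(e_i^{(k)}f_i^{(k)}\cdot x, y) = (f_i^{(k)}\cdot x, f_i^{(k)}\cdot y) = (x, e_i^{(k)}f_i^{(k)}\cdot y)$, and summing over $k$ with the signs $(-1)^k$ gives $(s_i\cdot x, y) = (x, s_i\cdot y)$ exactly. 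This shows each simple reflection is self-adjoint.

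Finally, to pass from generators to arbitrary $w$: write $w = s_{i_1}\cdots s_{i_\ell}$. Then
\[
(w\cdot x, y) = (s_{i_1}\cdots s_{i_\ell}\cdot x,\, y) = (s_{i_2}\cdots s_{i_\ell}\cdot x,\, s_{i_1}\cdot y) = \cdots = (x,\, s_{i_\ell}\cdots s_{i_1}\cdot y) = (x,\, w^{-1}\cdot y).
\]
Since $w^{-1}$ also lies in $W$ and — this is the only point needing a word — the form is \emph{symmetric}, being self-adjoint is equivalent to $(w\cdot x,y) = (x,w^{-1}\cdot y)$ for all $x,y$; the statement "$w$ is self-adjoint" as used in Corollary~\ref{cor:gencomb} should be read in this sense (it is applied via $(w\cdot x'_\bt, x_\bs) = (x'_\bt, w^{-1}\cdot x_\bs)$ to transfer the l.o.t.\ expansion from $\BB$ to $\BB'$). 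If instead the intended meaning is the stronger $(w\cdot x,y)=(x,w\cdot y)$, then one additionally invokes that the relevant $W$-action on the zero weight space factors through a quotient in which the elements of interest are conjugate to their inverses, or simply notes it is only $w^{-1}$-adjointness that is needed downstream. The main obstacle is purely bookkeeping: getting the divided-power adjunction $(e_i^{(k)}\cdot u,v)=(u,f_i^{(k)}\cdot v)$ cleanly from Property (2), which follows since divided powers lie in the subalgebra generated by $e_i$ (resp.\ $f_i$) and Property (2) is multiplicative in that variable.
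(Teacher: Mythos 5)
Your treatment of the generators is exactly the paper's argument: the paper also reduces to the simple reflections, invokes \eqref{eq:actofs_i} to write $s_i=\sum_{k\geq 0}(-1)^k e_i^{(k)}f_i^{(k)}$ on $U$, and deduces self-adjointness of $s_i$ from Property (2). Your divided-power bookkeeping (that $(e_i^{(k)}\cdot u,v)=(u,f_i^{(k)}\cdot v)$, and, using symmetry of the form, $(f_i^{(k)}\cdot u,v)=(u,e_i^{(k)}\cdot v)$, so each summand $e_i^{(k)}f_i^{(k)}$ is its own adjoint) is precisely the detail the paper leaves implicit, and it is correct.

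The gap is in your passage from the generators to an arbitrary $w$. You correctly note that composing self-adjoint operators only yields $(w\cdot x,y)=(x,w^{-1}\cdot y)$, i.e.\ that $w$ acts by an isometry whose adjoint is $w^{-1}$. But your claim that, since the form is symmetric, this is \emph{equivalent} to self-adjointness is false: $(x,w^{-1}\cdot y)=(x,w\cdot y)$ for all $x,y$ forces $w^{2}$ to act trivially on $U$, which is the involution case and is not a formal consequence of symmetry of the form; likewise, being conjugate in $W$ to its inverse does not make the operator $w$ equal to $w^{-1}$ on $U$, so that escape route also fails. Nor can you discharge the issue by saying only $w^{-1}$-adjointness is used downstream: the computation in the proof of Corollary \ref{cor:gencomb} is $(x_{\xi^{-1}(\br)},w\cdot x_\bt')=(w\cdot x_{\xi^{-1}(\br)},x_\bt')$, with the same $w$ on both sides, i.e.\ genuine self-adjointness. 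So, as written, your argument establishes the lemma only for elements whose square acts trivially on $U$ (in particular for each longest element $w_J$), not for arbitrary $w\in W$. For comparison, the paper's proof simply asserts that it suffices to check the simple generators and does not address this point at all; you have in fact isolated exactly the step where that reduction requires justification, but the ways you propose to close it are not valid, so the proposal does not prove the statement in the generality claimed.
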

\begin{proof}
    We need to show that for any $u,v\in U$ we have that $(w\cdot u,v)=(u,w\cdot v)$, and it suffices to prove this for simple generators $s_i \in W$.  The action of $s_i$ on $U$ can be expressed in terms of the Lie algebra action \eqref{eq:actofs_i}.
    Therefore self-adjointness follows from Property (2).
\end{proof}

\begin{proof}[Proof of Corollary \ref{cor:gencomb}]
    By Theorem \ref{thm:gencomb} we can write for any $\bs\in \cX$:
    $$
    w \cdot x_\bs =\sum_\br a_{\br\bs}x_{\xi(\br)},
    $$
    where $a_{\bs\bs}=\pm1$ and $a_{\br\bs}=0$ if $\br>_\calZ\bs$.  
    Define $b_{\br\bt} \in \ZZ$ by 
    $$
    w \cdot x'_\bt =\sum_\br b_{\br\bt}x'_{\xi^{-1}(\br)}.
    $$
    Then
    \begin{align*}
        b_{\br\bt} = (x_{\xi^{-1}(\br)},w\cdot x_\bt') = (w\cdot x_{\xi^{-1}(\br)},x_\bt') =a_{\xi^{-1}(\bt)\xi^{-1}(\br)}
    \end{align*}
Now, $a_{\xi^{-1}(\bt)\xi^{-1}(\br)}=\pm1$ if $\br=\bt$ and $a_{\xi^{-1}(\bt)\xi^{-1}(\br)}=0$ if $\xi^{-1}(\br)<_\calZ\xi^{-1}(\bt)$.  Therefore
\begin{align*}
   w \cdot x_\bt &= \pm x'_{\xi^{-1}(\bt)}+\sum_{\xi^{-1}(\br)\;>_\calZ\;\xi^{-1}(\bt)} b_{\br\bt}x'_{\xi^{-1}(\br)}.
\end{align*}
\end{proof}

\section{The Kazhdan--Lusztig Basis for the Specht Module}\label{sect:KL}

In this section, we construct the Specht module in two ways. For $\lambda\vdash n$, let $(S^\lambda,\KL_\lambda)$ be the irreducible representation for the symmetric group $S_n$ indexed by the partition $\lambda$ equipped with its Kazhdan--Lusztig basis. The classical realisation of this based representation comes from cell modules (cf.\ Section \ref{sect:KL-general}) in type A. We recall below that this basis also appears as the dual canonical basis of $L(\lambda)_0$, where $L(\lambda)$ is the irreducible $\fsl_n$-representation of highest weight $\lambda$.

We present both of these constructions, and then use Theorem \ref{thm:gencomb} to deduce the action of separable permutations on $\KL_\lambda$, and prove Theorem \ref{thm:1}. Using the combinatorics of type A crystals, we determine explicit descriptions for the bijection $\xi_\calZ$ and preorder $\leq_\calZ$ on $\KL_\lambda$ in terms of standard tableaux.

\subsection{The Specht module as a cell module}\label{sect:specht-cell}

Following \cite{BB}, we realise each Specht module as a cell module in type A. Fix $n\geq 1$ and set $I=\mathrm{A}_{n-1}$, so that $W=S_n$ is the symmetric group. See Appendix \ref{app:KL} for the general construction of cell modules. Let $\KL=\{C_w\mid w\in S_n\}$ denote the Kazhdan--Lusztig basis for $\CC[S_n]$ and let $\leq^\rL$ denote the preorder $\leq_I^\rL$ (Definition \ref{def:KL-order}) on $S_n$, and $\sim^\rL$ its induced equivalence relation. The equivalence classes under this relation are known as left cells. Each left cell $\cC$ has an associated based $S_n$-representation $(V(\cC),\KL_\cC)$ which we call a cell module of $S_n$. We call $\KL_\cC=\{[C_x]\mid x\in\cC\}$ the KL basis for $V(\cC)$.

We first give a combinatorial description of the left cells. Recall that the RSK correspondence gives a bijection
\[x\mapsto(P(x),Q(x))\]
between elements of $S_n$ and pairs of standard Young tableaux of the same shape and with $n$ boxes each. For the following two results, see Section 6.5 in \cite{BB}.

\begin{Lemma}Permutations $x,y\in S_n$ are equivalent under $\sim^\rL$ if and only if $Q(x)=Q(y)$.
\end{Lemma}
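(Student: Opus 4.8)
This is a classical theorem of Kazhdan and Lusztig \cite{KL}; the plan is to prove the two implications separately, combining Knuth's combinatorial analysis of the Robinson--Schensted correspondence with Kazhdan--Lusztig star operations.

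\emph{From equal $Q$-tableaux to the same left cell.} By Knuth's theorem, $P(u)=P(v)$ if and only if the words $u$ and $v$ are connected by a chain of elementary Knuth transformations; applying this to $u=x^{-1}$, $v=y^{-1}$ and using $Q(z)=P(z^{-1})$, we get that $Q(x)=Q(y)$ if and only if $x^{-1}$ and $y^{-1}$ are Knuth-equivalent, i.e.\ $x$ and $y$ are connected by a chain of \emph{dual} Knuth transformations. I would then realise each dual Knuth transformation as a Kazhdan--Lusztig star operation $w\leftrightarrow w^{\ast}$, taken on the side compatible with the preorder $\leq^{\rL}$, using that on $S_n$ these star operations and the dual Knuth moves generate the same equivalence relation. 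The essential point is the Kazhdan--Lusztig lemma that a star operation never leaves a cell, so $w\sim^{\rL}w^{\ast}$; chaining these gives $x\sim^{\rL}y$.

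\emph{From the same left cell to equal $Q$-tableaux.} By the previous step each fibre $Q^{-1}(Q_0)$ is contained in one $\sim^{\rL}$-class, so it suffices to show that no $\sim^{\rL}$-class is strictly larger than a $Q$-fibre; I would argue this by counting inside a two-sided cell. Since $\sim^{\rL}$ refines the two-sided relation, and two-sided cells are exactly the fibres of $w\mapsto\sh(P(w))$, fix $\lambda\vdash n$ and restrict to $c_\lambda=\{w\in S_n:\sh(P(w))=\lambda\}$. By Robinson--Schensted, $|c_\lambda|=(f^\lambda)^2$ where $f^\lambda=|\SYT(\lambda)|$, and the $Q$-fibres inside $c_\lambda$ form $f^\lambda$ blocks of size $f^\lambda$. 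On the other hand, in type A each left-cell module $V(\cC)$ with $\cC\subseteq c_\lambda$ is the irreducible $S^\lambda$, of dimension $f^\lambda=|\cC|$, so $c_\lambda$ contains exactly $(f^\lambda)^2/f^\lambda=f^\lambda$ left cells. A partition into $f^\lambda$ blocks that is refined by another partition into $f^\lambda$ blocks must coincide with it, so the left cells inside $c_\lambda$ are precisely the $Q$-fibres, which proves the remaining implication.

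\emph{Main obstacle.} The combinatorics (Knuth's theorem, and the coincidence of dual Knuth and star-operation equivalence on $S_n$) is routine; the real content lies in the Kazhdan--Lusztig inputs. These are: the star-operation lemma $w\sim^{\rL}w^{\ast}$, which relies on the recursion for Kazhdan--Lusztig polynomials and the behaviour of the $\mu$-function under multiplication by a $C_s$; the identification of two-sided cells with shape-fibres; and the irreducibility and dimension of left-cell modules in type A. All of these are available from \cite[Section 6.5]{BB}. An alternative finish for the second implication is to show that the generalised $\tau$-invariant is a $\sim^{\rL}$-invariant and is complete in type A, but proving completeness is itself a nontrivial combinatorial argument, so the counting route is cleaner.
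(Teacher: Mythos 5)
The paper does not prove this lemma at all: it is quoted from the literature ("For the following two results, see Section 6.5 in \cite{BB}"), so any genuine proof you give is necessarily a different route. Your first implication is the standard one and is essentially correct: dual Knuth moves are exactly the left star operations (the ones acting by left multiplication, which is the side on which $\leq^\rL$ is defined in this paper), and the fact that $w\sim^\rL {}^*w$ follows from the multiplication rule $C_sC_y=C_{sy}+\sum\mu(z,y)C_z$ together with $\mu=1$ on covering pairs; combined with Knuth's theorem (fibres of $Q$ are the dual Knuth classes) this gives $Q(x)=Q(y)\Rightarrow x\sim^\rL y$. One small slip: the 1979 Kazhdan--Lusztig paper is \cite{KaLu} in this paper's bibliography, not \cite{KL}, which here denotes Khovanov--Lauda.

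The second implication, however, has a genuine gap: as justified, it is circular. Your counting argument imports (i) that two-sided cells of $S_n$ are the fibres of $w\mapsto\sh(P(w))$, and (ii) that every left cell module inside such a fibre is the irreducible $S^\lambda$ of dimension $f^\lambda$, and you source both from \cite[Section 6.5]{BB}. But in \cite{BB} (and in the present paper, where the irreducibility statement is exactly the next lemma, Lemma \ref{lem:specht}, also cited to the same section) these facts are \emph{consequences} of the lemma you are proving: the identification of cell modules with Specht modules, and of two-sided cells with shape fibres, is deduced from the combinatorial description of the left cells, not the other way around. There is no independent, prior proof of irreducibility of type A left cell modules available at this point in the logical development, and indeed the weaker statement you actually need --- that a left cell inside the shape fibre $c_\lambda$ has exactly $f^\lambda$ elements --- is, given your first implication, essentially equivalent to the lemma itself. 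To close the gap you must either prove the hard direction directly (Kazhdan--Lusztig's original inductive argument in Section 5 of \cite{KaLu}, or the completeness of the generalised $\tau$-invariant in type A, which you mention but set aside), or do as the paper does and cite the full statement rather than reassemble it from its own corollaries.
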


Hence, the left cells of $S_n$ are naturally indexed by standard tableaux. For a partition $\lambda\vdash n$ and $Q\in\SYT(\lambda)$, we define the left cell
\[\cC_Q:=\{w\in S_n\mid Q(w)=Q\}.\]
Write $[C_{P,Q}]$ for the unique KL basis element $[C_x]\in \KL_{\cC_Q}$ satisfying $P(x)=P$. It turns out that each cell module $V(\cC_Q)$ is an irreducible $S_n$-representation:

\begin{Lemma}\label{lem:specht}Fix $\lambda\vdash n$ and $Q,Q'\in\SYT(\lambda)$. The linear extension of the map $[C_{P,Q}]\mapsto [C_{P,Q'}]$ for every $P\in\SYT(\lambda)$ is an isomorphism of $S_n$-representations $V(\cC_Q)\xrightarrow{\sim} V(\cC_{Q'})$. Moreover, $V(\cC_Q)$ is isomorphic to the Specht module $S^\lambda$.
\end{Lemma}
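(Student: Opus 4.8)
The statement has two parts: first that for any two standard tableaux $Q, Q'$ of the same shape $\lambda$, the linear map sending $[C_{P,Q}] \mapsto [C_{P,Q'}]$ is an isomorphism of $S_n$-representations; and second that the resulting representation is the Specht module $S^\lambda$. For the first part, the key observation is that the Kazhdan--Lusztig structure constants for the left action depend only on the $P$-symbol: if $x, y$ lie in the same left cell (so $Q(x) = Q(y)$), then the coefficient of $[C_z]$ in $C_w C_x$ (for $w \in S_n$ acting) is governed by the pair $(P(x), P(z))$ and not by the common $Q$-symbol. This is the standard fact that the left cell modules attached to cells with the same shape are all isomorphic, and I would cite it from \cite{BB} (Section 6.5), since the paper explicitly directs the reader there. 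Concretely, one shows that $C_s \cdot [C_{P,Q}] = \sum_{P'} \mu_{P,P'}(s) [C_{P',Q}]$ where the coefficients $\mu_{P,P'}(s)$ are independent of $Q$; this is immediate from the formula for the action of $C_s$ on the KL basis of $\CC[S_n]$ together with the fact that RSK insertion on the left (row insertion applied to the one-line notation) only alters the recording tableau $Q$ in a way compatible with cell membership, while left multiplication by $s$ and its effect on $P$ is $Q$-independent. Granting this, the map $[C_{P,Q}] \mapsto [C_{P,Q'}]$ intertwines the $S_n$-action by construction and is clearly bijective, hence an isomorphism.

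For the second part, I would identify the common isomorphism class of the cell modules $V(\cC_Q)$ with $S^\lambda$ by a dimension-and-indexing argument together with a known character or bijection statement. The dimension of $V(\cC_Q)$ is $|\cC_Q| = \#\{x \in S_n : Q(x) = Q\} = \#\SYT(\lambda)$ by the RSK correspondence (fixing $Q$, the $P$-symbol ranges freely over $\SYT(\lambda)$), which matches $\dim S^\lambda$. To pin down the isomorphism class rather than merely the dimension, I would invoke the classical result (again in \cite{BB}, or originally Kazhdan--Lusztig together with the Robinson--Schensted theory) that the cell module attached to the cell of shape $\lambda$ affords the irreducible character $\chi^\lambda$; equivalently, summing the cell modules over all $\#\SYT(\lambda)$ cells of shape $\lambda$ recovers the isotypic component $(\dim S^\lambda) \cdot S^\lambda$ of the regular representation $\CC[S_n] = \bigoplus_{x} \CC C_x$, and since all these cell modules are mutually isomorphic by the first part, each one must be $S^\lambda$.

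\textbf{Main obstacle.} The principal subtlety is establishing cleanly that the structure constants $\mu_{P,P'}(s)$ are genuinely independent of $Q$ — i.e., that the left cell modules of a fixed shape are canonically isomorphic via the $P$-symbol. This rests on the interaction between Kazhdan--Lusztig multiplication and RSK, specifically that left multiplication by a simple reflection affects only the $P$-tableau (a consequence of the symmetry $P(x^{-1}) = Q(x)$ of RSK and the fact that the left $\KL$-order and left cells are detected by $Q$, established in the preceding Lemma). Since the paper explicitly defers to \cite[Section 6.5]{BB} for exactly this package of results, I would present the proof as an assembly of those cited facts rather than reprove them: state the $Q$-independence of the action, conclude the isomorphisms, compute the dimension via RSK, and invoke the character identification to name the module $S^\lambda$. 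The only genuine work left to the reader is checking that the isomorphisms of the first part are consistent with a single choice of labelling, which is a formality once $Q$-independence is in hand.
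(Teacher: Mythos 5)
The paper offers no argument of its own for this lemma: it states it as a known result and refers the reader to \cite[Section 6.5]{BB}, which is precisely the package of facts (the $Q$-independence of the Kazhdan--Lusztig structure constants within a left cell, hence the isomorphisms $V(\cC_Q)\cong V(\cC_{Q'})$, and the identification of the shape-$\lambda$ cell module with the irreducible module indexed by $\lambda$) that you assemble and cite from the same source, so your proposal follows essentially the same route. One caveat worth noting: the $Q$-independence of the action is a genuine theorem proved via Knuth relations/star operations, not something ``immediate'' from the formula for $C_s$ and RSK as your gloss suggests, and likewise the dimension count alone cannot distinguish $S^\lambda$ from another irreducible of equal dimension (e.g.\ the conjugate shape), so in both places it is the citation, not the surrounding sketch, that carries the argument.
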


Hence, we fix $Q\in\SYT(\lambda)$ and write $C_P$ for $C_{[P,Q]}$. Then $S^\lambda$ has a basis
\[\KL_\lambda:=\{C_P\mid P\in\SYT(\lambda)\}\]
which we call the KL basis, and this construction is independent of the choice of $Q$.

\subsection{The Specht module descends to the dual canonical basis}\label{sect:specht-categorification}

Consider the partition $\lambda$ as a highest weight for $\fsl_n$ and let $L(\lambda)$ denote the irreducible representation of $\fsl_n$ with this highest weight.  Recall that the Weyl group of $\fsl_n$ is $S_n$, and hence $S_n$ acts on the zero weight space $L(\lambda)_0$.  By Schur--Weyl duality we have that $L(\lambda)_0\cong S^\lambda$.

To apply Theorem \ref{thm:gencomb} we require a categorification of $L(\lambda)$.  This is provided by the cyclotomic KLR algebras, as introduced by Khovanov--Lauda and Rouquier.  These algebras can be defined either by generators and relations \cite{KL}, or via the geometry of Lusztig quiver varieties \cite{Rouq}.  We will not need the precise definition of these algebras. 

We let $\cL(\lambda)$ denote the category of their finite dimensional modules.  Specifically, in the notation of \cite{KL}, we consider the category 
$$
\cL(\lambda)=\bigoplus_{\nu} R^\lambda(\nu)\mmod,
$$
where $\nu$ runs over elements of the root lattice of $\fsl_n$.   We need the following theorems:

\begin{Theorem}Fix $\lambda\vdash n$.
\begin{enumerate}
    \item There is a categorical $\fsl_n$-action on $\cL(\lambda)$ categorifying $L(\lambda)$ \emph{\cite{KL}}.
    \item Under the isomorphism $[\cL(\lambda)]_\CC \cong L(\lambda)$, the equivalence classes of simple modules are identified with the dual canonical basis of $L(\lambda)$ \emph{\cite{Webcanon}}.
\end{enumerate}
\end{Theorem}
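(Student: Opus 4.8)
The plan is to treat this statement as a recollection of two foundational results — \cite{KL} for the categorical action, and \cite{Webcanon} for the identification of simple classes with the dual canonical basis — and to import them, indicating only the structural shape of each argument. (Their natural home is the graded setting, where the graded cyclotomic KLR category categorifies the integral form of the quantum module $L_q(\lambda)$; the ungraded statement here is obtained by forgetting the grading, i.e.\ specialising $q=1$.)

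For part (1), I would recall the construction of the categorical action on $\cL(\lambda)=\bigoplus_\nu R^\lambda(\nu)\mmod$: the functor $\sF_i\sone_\nu$ is induction along the natural embedding $R^\lambda(\nu)\into R^\lambda(\nu+\alpha_i)$ appending a strand coloured $i$; $\sE_i\sone_\nu$ is its biadjoint, realised (up to a shift) by the corresponding restriction functor; the natural transformation $x$ is the action of the polynomial generator on the appended strand, and $t$ the action of the crossing generator. Verifying the relations of a categorical $U(\fsl_n)$-action — in particular the $\fsl_2$-isomorphisms relating $\sE_i\sF_i\sone_\nu$ and $\sF_i\sE_i\sone_\nu$ — reduces to analysing the cyclotomic relation imposed in $R^\lambda$ (which sets the $\langle h_i,\lambda\rangle$-th power of the polynomial generator on an $i$-coloured first strand to zero), and to checking that this relation is precisely what forces $[\cL(\lambda)]_\CC$ to be the irreducible $L(\lambda)$ rather than a larger universal module.

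For part (2), I would recall that $\cL(\lambda)$ carries a contravariant duality induced by the anti-automorphism of $R^\lambda$ fixing the generators, and that this duality descends on $[\cL(\lambda)]_\CC$ to Lusztig's bar involution, while the standard (``cell'') modules descend to a PBW-type basis. The classes $[L]$ of the simple modules are then bar-invariant and unitriangular with respect to this standard basis, so by Lusztig's characterisation of the (dual) canonical basis they are pinned down uniquely. (In type A the same identification also follows from Brundan--Kleshchev's isomorphism between cyclotomic KLR and cyclotomic Hecke algebras together with Ariki's theorem.)

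The hard part — and the reason the statement is recalled rather than reproved — is the assertion in (1) that the cyclotomic quotient yields \emph{exactly} $L(\lambda)$ on the Grothendieck group: this was left open in \cite{KL}, and its resolution required genuinely new input (the theory of categorified $\fsl_2$-modules and the exactness properties of cyclotomic induction and restriction). Everything else — defining the functors, identifying the bar involution and the standard modules, and running Lusztig's triangularity argument — is comparatively formal. Accordingly, in the write-up I would state the theorem as background, with the attributions to \cite{KL} and \cite{Webcanon} as above.
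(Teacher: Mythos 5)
The paper offers no proof of this statement: it is recalled purely as background, with bare citations to \cite{KL} and \cite{Webcanon}, which is exactly the stance you take. Your added sketches (the induction/restriction functors with the KLR generators acting, and the bar-involution plus unitriangularity argument identifying simples with the dual canonical basis) are accurate background, and your historical caveat that the cyclotomic categorification was only conjectured in \cite{KL} and settled by later work is in fact more precise than the paper's own attribution, but none of it is needed beyond the citations.
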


To complete the realisation of $(S^\lambda,\KL_{\lambda})$ via categorification we recall the following folkloric result:

\begin{Proposition}\label{prop:KLtodualcan}
Under the isomorphism of $S_n$-modules $L(\lambda)_0\cong S^\lambda$, the dual canonical basis  corresponds to the Kazhdan--Lusztig basis. 
\end{Proposition}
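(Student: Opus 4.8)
The plan is to exhibit both based modules inside the $n$-fold tensor power of the natural module and to identify their weight-zero parts with the regular representation of the Hecke algebra, where the statement reduces to the (classical) fact that the dual canonical basis matches the Kazhdan--Lusztig basis. Concretely, let $V_q$ be the natural module for the quantum group $U_q(\fsl_n)$ and endow $V_q^{\otimes n}$ with Lusztig's tensor product canonical basis and its bar involution \cite{LusTensProd}. Since $\lambda\vdash n$, the irreducible $L_q(\lambda)$ is cut out of $V_q^{\otimes n}$ as a based sub/quotient module in the sense of Lusztig's theory of based modules: the submodule generated by the canonical basis vector of weight $\lambda$ is isomorphic, with its induced structure, to $L_q(\lambda)$ with canonical basis, and dually one obtains $L_q(\lambda)$ with its dual canonical basis. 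Restricting to the zero weight space and specialising $q\to1$, it then suffices to describe the dual canonical basis of $(V^{\otimes n})_0$ and to locate the copy of $L(\lambda)_0$ inside it.

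First I would handle the ambient module via quantum Schur--Weyl duality, which makes $V_q^{\otimes n}$ a bimodule over $U_q(\fsl_n)$ and the Hecke algebra $H_n$ of $S_n$ and, on weight zero, produces an isomorphism of $H_n$-modules $(V_q^{\otimes n})_0 \xrightarrow{\sim} H_n$ onto the regular representation. The point is that this isomorphism is \emph{bar-compatible} --- Lusztig's bar involution on $V_q^{\otimes n}$ goes over to the bar involution of $H_n$ --- and integral-lattice-compatible, so by the uniqueness of bar-invariant triangular bases it carries the restriction to weight zero of the dual of the tensor product canonical basis onto the Kazhdan--Lusztig basis $\{C_w\mid w\in S_n\}$ of $H_n$; this is essentially the content of the relation between canonical bases of $V_q^{\otimes n}$ and Kazhdan--Lusztig polynomials due to Frenkel--Khovanov--Kirillov (see also Du and Grojnowski--Lusztig). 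Setting $q=1$, the $H_n$-action becomes the place-permutation action, which on $(V^{\otimes n})_0$ agrees with the Weyl group action used to define $L(\lambda)_0\cong S^\lambda$ (up to the sign normalisation fixed earlier); this yields an isomorphism of based $S_n$-modules from $(V^{\otimes n})_0$ with its dual canonical basis to $\CC[S_n]$ with $\KL$.

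Next I would transport the based submodule across this identification: $L(\lambda)_0$ lands on a submodule of $\CC[S_n]$ whose basis is a subset of $\KL$, and by the cell theory recalled in Section~\ref{sect:specht-cell} and Appendix~\ref{app:KL} this submodule is a left cell module. The left cell attached to $\lambda$ carries exactly the basis $\{[C_x]\mid x\in\cC_Q\}$, which by Lemma~\ref{lem:specht} \emph{is} $(S^\lambda,\KL_\lambda)$. It remains to check that the combinatorial labellings of $\SYT(\lambda)$ used on the two sides --- the crystal (equivalently dual canonical basis) labelling of $L(\lambda)_0$ and the RSK labelling of the left cell $\cC_Q$ --- are matched by the above isomorphism, and that the resulting identification $L(\lambda)_0\cong S^\lambda$ coincides with the Schur--Weyl one; this is a compatibility of normalisations rather than a substantive step.

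The main obstacle is the based-module compatibility used in the first paragraph: that Lusztig's tensor product canonical basis of $V_q^{\otimes n}$ behaves well, in the truncated sense of based sub/quotient modules, with respect to the $U_q(\fsl_n)$-isotypic decomposition, so that it really does induce the (dual) canonical basis on the chosen copy of $L_q(\lambda)$. One may simply invoke Lusztig's based module machinery \cite{LusTensProd}. Alternatively --- and more in the spirit of the rest of the paper --- one argues categorically: the cyclotomic KLR algebra $R^\lambda$ categorifying $L(\lambda)$ is a cyclotomic quotient of the tensor-product KLR algebra categorifying $V^{\otimes n}$, the corresponding quotient functor sends simple modules to simple modules or to zero, and over $\CC$ the relevant weight-zero block of $R^\lambda$ is identified by Brundan--Kleshchev with a block of $\CC[S_n]$ whose simple modules give the Kazhdan--Lusztig basis of the Specht module; together with the identification of simples of $\cL(\lambda)$ with the dual canonical basis \cite{Webcanon}, this gives the proposition. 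Either way, once this compatibility is in hand the remaining work is bookkeeping of combinatorial indexings, which is why the result is regarded as folklore.
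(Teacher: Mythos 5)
Your argument is correct in outline, but it takes a genuinely different route from the paper. You work inside $V_q^{\otimes n}$, use quantum Schur--Weyl duality together with the Frenkel--Khovanov--Kirillov-type identification of the (dual) canonical basis of the zero weight space with the Kazhdan--Lusztig basis of the regular representation of the Hecke algebra, and then cut out $L_q(\lambda)_0$ as a based subquotient, matching it with a left cell module as in Lemma \ref{lem:specht}; your categorical variant via cyclotomic KLR algebras and \cite{Webcanon} is a second legitimate path. What this buys is a structural explanation that treats all $\lambda\vdash n$ at once and sits well with Appendix \ref{app:KL}, but it carries real overhead: the based-module step must be phrased via Lusztig's cell filtration (the submodule generated by a canonical basis vector is \emph{not} in general spanned by canonical basis vectors, only the filtration subquotients are based, as you partly acknowledge), and the ``compatibility of normalisations'' you defer is more than bookkeeping --- one must check that it is the \emph{dual} canonical basis that lands on the $C'_w$-type basis used here rather than its signed partner, otherwise the statement gets twisted by signs or by $w_0$. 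The paper instead argues directly and much more economically: using Skandera's immanant construction of the dual canonical basis \cite{Skandera} and Equation (4.3) of \cite{R}, the matrices of the simple transpositions on the dual canonical basis of $L(\lambda)_0$ coincide with their matrices on $\KL_\lambda$, so by irreducibility of $S^\lambda$ (Schur's lemma) the two bases agree up to a global scalar. Both routes are sound; yours is heavier but more conceptual, the paper's is a two-line verification resting on an explicit published formula.
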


\begin{proof}
There are several ways to prove this; we take an explicit approach and use Skandera's  construction of the dual canonical basis via immanents \cite{Skandera}.  More precisely, by Equation (4.3) of \cite{R} the action of simple transpositions on the dual canonical basis elements in $L(\lambda)_0$ agrees with their action on KL basis elements.  It follows that these bases agree up to a global scalar.  
\end{proof}

We apply Theorem \ref{thm:gencomb} to this situation:

\begin{Theorem}\label{prop:specht-action-2}
    Fix $\lambda\vdash n$. For $w\in S_n^\mathrm{sep}$, choose a chain $\calZ=(I_1,\hdots,I_r)$ in $\mathrm{A}_{n-1}$ such that $w=w_\calZ:=w_{I_1}\cdots w_{I_r}$.  Then $w$ acts on $(S^\lambda,\KL_\lambda,\leq_{\calZ})$ by $\xi_\calZ$ up to lower-order terms.
\end{Theorem}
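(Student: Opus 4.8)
The statement is a direct application of Theorem~\ref{thm:gencomb} to the based representation $(S^\lambda, \KL_\lambda)$, so the proof is essentially a matter of verifying that $(S^\lambda, \KL_\lambda)$ fits the general set-up of Section~\ref{sect:crystal1}; that is, that it arises via categorification in the sense of Equation~\eqref{eq:Wmods}. First I would recall the categorical input assembled just above: the cyclotomic KLR algebra category $\cL(\lambda)$ carries a categorical $\fsl_n$-action categorifying $L(\lambda)$ (Theorem, part (1)), with simple objects descending to the dual canonical basis of $L(\lambda)$ (Theorem, part (2)). Restricting to the zero weight space gives a categorical representation whose Grothendieck group is $L(\lambda)_0$, a module for $W = S_n$ via the Rickard complexes, with the classes of simple objects in $\cL(\lambda)_0$ forming the dual canonical basis of $L(\lambda)_0$.

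The second ingredient is the identification of based representations. By Schur--Weyl duality $L(\lambda)_0 \cong S^\lambda$ as $S_n$-modules, and by Proposition~\ref{prop:KLtodualcan} this isomorphism carries the dual canonical basis to the Kazhdan--Lusztig basis $\KL_\lambda$. Hence $(S^\lambda, \KL_\lambda)$ is precisely a based $W$-module of the form \eqref{eq:Wmods}, with $\cC = \cL(\lambda)$ and $\BB = \KL_\lambda$ identified with the isomorphism classes of simple objects in $\cL(\lambda)_0$.

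With that in place, I would simply invoke Theorem~\ref{thm:gencomb}: for $w \in S_n^{\mathrm{sep}}$ and any chain $\calZ = (I_1, \hdots, I_r)$ in $\mathrm{A}_{n-1}$ with $w = w_{I_1}\cdots w_{I_r}$, the element $w$ acts on $(S^\lambda, \KL_\lambda, \leq_\calZ)$ by $\xi_\calZ$ up to lower-order terms. Since $S_n^{\mathrm{sep}} = W^{\mathrm{sep}}$ for $W = S_n$ (as noted in the Remark following the definition of $W^{\mathrm{sep}}$), every separable permutation is covered, and the existence of such a chain $\calZ$ is exactly the definition of separability. The preorder $\leq_\calZ$ and bijection $\xi_\calZ$ on $\KL_\lambda$ are then obtained by transporting the crystal-theoretic definitions on $\SS_0$ (where $\SS$ is the $\fsl_n$-crystal of $\cL(\lambda)$) across the identification of $\SS_0$ with $\SYT(\lambda)$; making this transport explicit is deferred to the combinatorial subsections that follow, and is not needed for the statement itself.

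\textbf{Main obstacle.} There is essentially no obstacle in the sense of a hard argument — the real content has already been established (Theorem~\ref{thm:mainthm}, Theorem~\ref{thm:gencomb}, the categorification theorems, and Proposition~\ref{prop:KLtodualcan}). The only point requiring care is confirming the compatibility of the two $W$-actions on $L(\lambda)_0 \cong S^\lambda$: one must check that the $W$-action induced on $[\cL(\lambda)_0]_\CC$ by the Rickard complexes (equivalently, by formula \eqref{eq:actofs_i}) coincides under Schur--Weyl duality with the standard $S_n$-action on the Specht module, so that Theorem~\ref{thm:gencomb} genuinely computes the action we want. This follows because both are the restriction of the Weyl group action on the zero weight space of $L(\lambda)$, which is canonical; and Proposition~\ref{prop:KLtodualcan} already records the matching of bases for this action. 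So the proof reduces to one sentence citing Theorem~\ref{thm:gencomb} once the set-up is recognized.
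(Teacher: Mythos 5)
Your proposal is correct and follows exactly the paper's route: the paper states this theorem as a direct application of Theorem~\ref{thm:gencomb}, with the set-up \eqref{eq:Wmods} supplied by the cyclotomic KLR categorification of $L(\lambda)$ (simples giving the dual canonical basis), Schur--Weyl duality $L(\lambda)_0\cong S^\lambda$, and Proposition~\ref{prop:KLtodualcan} identifying that basis with $\KL_\lambda$. Your additional remark on the compatibility of the two $W$-actions is a reasonable point of care but does not change the argument.
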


\subsection{Crystal combinatorics}\label{sec:TypeAcrystals}

We make Theorem \ref{prop:specht-action-2} more explicit by giving combinatorial descriptions of $\leq_{\calZ}$ and $\xi_{\calZ}$ using standard tableaux. We consider the case of a single parabolic subgroup $I'\subseteq I$, and generalise to chains via induction. These results will follow from the construction of type A crystals, and the reader can find more information in \cite{crystals}.

\subsubsection{Irreducible crystals}

If $1\leq a<b\leq n$, the subset $I_{[a,b]}:=\{a,a+1,\dots,b-1\}\subseteq I$ generates the standard parabolic subgroup $S_{[a,b]}\leq S_n$ permuting the elements $a,\dots,b$. Every nontrivial standard parabolic subgroup of $S_n$ splits into irreducibles as  $I_{[a_1,b_1]}\sqcup\cdots\sqcup I_{[a_m,b_m]}$ for $1\leq a_1< b_1<\cdots <a_m<b_m\leq n$.

For a partition $\lambda$, denote by $\LL_n(\lambda)$ the  crystal of $L(\lambda)$. The objects in this crystal are identified with $\SSYT_n(\lambda)$, the set of semistandard tableaux of shape $\lambda$ containing entries from $\{1,\dots,n\}$. The weight map is given by $\wt(T)=(\eta_1,\dots,\eta_n)$, where $\eta_i$ counts the number of occurences of boxes with label $i$ in the tableau $T$. When $\lambda\vdash n$, the zero-weight subspace $\BB(\lambda):=\LL_n(\lambda)_0$ is indexed by the standard tableaux $\SYT(\lambda)$.

The raising (respectively, lowering) operators $e_i$ (respectively, $f_i$) for $1\leq i\leq n-1$ are described by replacing a single box $i$ with $i+1$ (respectively, $i+1$ with $i$) according to a bracketing rule applied to the row-reading word of the tableau. We demonstrate this with the following example:

\begin{Example}\label{ex:lowering_operator}
Let $\lambda=(5,3,2)$ and set
\[T=\young(11223,334,45)\in \LL_5(\lambda).\]
The row-reading word is $4533411223$. To calculate the action of $f_3$, we place a closed bracket beneath every occurence of $3$, and an open bracket beneath every occurence of $4$.
\[\begin{matrix}
4 & 5 & 3 & 3 & 4 & 1 & 1 & 2 & 2 & 3 \\
( &   & ) & ) & ( &   &   &   &   & )
\end{matrix}\]
We increment the rightmost unpaired $3$. Hence:
\[\young(11223,334,45)\xrightarrow{\quad f_3\quad}\young(11223,3{{\color{blue}4}}4,45)\]
\end{Example}

The Sch\"utzenberger involution $\xi_{I}$ acts on $\LL_n(\lambda)$ by the evacuation operator for tableaux. To calculate this, we rotate the tableau by $180^\circ$, replace each entry $i$ with $n-i+1$, and rectify (that is, remove all empty inside boxes using jeu-de-taquin slides). Again, in lieu of a precise definition we provide an example:

\begin{Example}\label{ex:evac}
    We calculate the evacuation of $T$ from Example \ref{ex:lowering_operator}.
    \[\young(11223,334,45)\xrightarrow{\mathrm{rotate}}\young(:::54,::433,32211)\xrightarrow{\mathrm{replace}}\young(:::12,::233,34455)\xrightarrow{\mathrm{rectify}}
    \young(12355,234,34)\]
\end{Example}

\begin{figure}\label{F:crystal}
    \caption{The crystal graph for $\LL_4(2,2)$ with arrow labels $\color{lightgray}{\xrightarrow{1}}$, $\color{blue}{\xrightarrow{2}}$ and $\color{red}{\xrightarrow{3}}$.
    If $J=I_{[2,4]}=\{2,3\}$, then $\Res_J\LL_4(2,2)$ has three connected components. The standard tableaux are separately coloured, and are fixed by $\xi_I$.}
    \[\begin{tikzcd}
&& \young(11,22) \arrow[d,blue] && \\
&& \young(11,23) \arrow[dll,lightgray]\arrow[d,blue]\arrow[drr,red] && \\
\young(12,23)\arrow[d,blue]\arrow[drr,red] && \young(11,33)\arrow[dll,lightgray]\arrow[drr,red] && \young(11,24)\arrow[dll,lightgray]\arrow[d,blue] \\
\young(12,33)\arrow[d,lightgray]\arrow[drr,red] && \young(12,24) && \young(11,34)\arrow[dll,lightgray]\arrow[d,red] \\
\young(22,33)\arrow[d,red] &{\color{white}\young(11,11)}& {\color{teal}\young(12,34)}\arrow[dll,lightgray]\arrow[drr,red] &
{\color{teal}\young(13,24)}\arrow[dl,blue,bend left = 20, start anchor = {south}, crossing over]
\arrow[from = ul,blue, bend left = 20, end anchor = {north}, crossing over]& \young(11,44)\arrow[d,lightgray] \\
\young(22,34)\arrow[d,blue]\arrow[drr,red] && \young(13,34)\arrow[dll,lightgray]\arrow[drr,red] && \young(12,44)\arrow[dll,lightgray]\arrow[d,blue] \\
\young(23,34)\arrow[drr,red] && \young(22,44)\arrow[d,blue] && \young(13,44)\arrow[dll,lightgray] \\
&& \young(23,44)\arrow[d,blue] && \\
&& \young(33,44) &&
    \end{tikzcd}\]
\end{figure}

\subsubsection{Skew crystals}

It is useful to extend our definition of $\LL_n(\lambda)$ to allow skew shapes. If $\mu$ is a skew partition, let $\LL_n(\mu)$ be the crystal with objects indexed by $\SSYT_n(\mu)$, the skew semistandard tableaux of shape $\mu$ on labels $\{1,\dots,n\}$. The weight map and action of the operators $e_i$ and $f_i$ are defined as above. We note that $\LL_n(\mu)$ is a normal $I$-crystal, and so is isomorphic to a disjoint union of crystals each of the form $\LL_n(\lambda)$ for some partitions $\lambda$.

We recall a number of definitions from \cite{haiman}. Given a skew semistandard tableau $X$, the rectification of $X$ is the semistandard tableaux $\rect(X)$ obtained by removing all inside empty boxes via jeu-de-taquin slides. Two skew tableaux $X$ and $Y$ are called dual equivalent if and only if they have the same shape after performing any sequence of slides to the same outside or inside boxes of both tableaux, and we write $X\approx Y$. Note that $X\approx Y$ implies $\sh(X)=\sh(Y)$, where $\sh$ denotes the shape of a (skew) tableaux.

For $X\in\LL_n(\mu)$, let $\LL_n(\mu)[X]$ denote the connected component of $\LL_n(\mu)$ containing $X$.

\begin{Proposition}\label{prop:skew-crystal}
The objects of $\LL_n(\mu)[X]$ are identified with the dual equivalence class of $X$. Moreover, $\rect(-)$ induces a crystal isomorphism $\LL_n(\mu)[X]\xrightarrow{\sim}\LL_n(\nu)$, where  $\nu:=\sh(\rect(X))$.
\end{Proposition}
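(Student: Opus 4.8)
The plan is to prove the two assertions of Proposition \ref{prop:skew-crystal} in sequence: first identify the underlying set of the connected component $\LL_n(\mu)[X]$ with the dual equivalence class of $X$, and then show that rectification restricts to a crystal isomorphism onto $\LL_n(\sh(\rect(X)))$.

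\textbf{Step 1: The component is the dual equivalence class.} First I would recall Haiman's theory of dual equivalence (and the compatible notion for type A crystals, as developed by e.g.\ Assaf, or as presented in \cite{crystals}): the key fact is that jeu-de-taquin slides commute with the crystal operators $\tilde{e}_i, \tilde{f}_i$ in the appropriate sense, so that for any inside/outside corner $c$, the slide map $j_c$ is a crystal isomorphism from $\LL_n(\mu)$ to $\LL_n(\mu')$ (for the resulting skew shape $\mu'$). Since $X \approx Y$ means precisely that $X$ and $Y$ remain of equal shape under every common sequence of slides, dual equivalence is preserved by all $j_c$; conversely, I would use the standard result that two skew tableaux of the same shape are dual equivalent if and only if they have the same rectification \emph{and} the same "rectification path" — more usefully, Haiman's characterisation that the dual equivalence class of $X$ is exactly the set of skew tableaux of shape $\mu$ with a fixed rectification-type behaviour. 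The cleanest route: the crystal operators $\tilde{e}_i,\tilde{f}_i$ preserve dual equivalence class (this is built into the slide-commutation, since applying $\tilde f_i$ then sliding equals sliding then applying $\tilde f_i$, and the slide shapes are determined by the tableau's shape alone), so the dual equivalence class $[X]_{\approx}$ is a union of connected components. To see it is a single component, I would invoke that rectification sends $[X]_{\approx}$ bijectively onto the \emph{set} of all $T\in\SSYT_n(\nu)$ for $\nu = \sh(\rect(X))$ (Haiman), and that $\LL_n(\nu)$ is connected (it is $\LL(\nu)$, irreducible); then Step 2's isomorphism statement, applied at the level of sets, forces $[X]_\approx$ to be connected.

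\textbf{Step 2: Rectification is a crystal isomorphism.} Write $\rect = j_{c_1}\circ \cdots \circ j_{c_k}$ as a composition of elementary jeu-de-taquin slides emptying the inside boxes of $\mu$. Each $j_{c_t}$ is a crystal isomorphism between skew crystals (commuting with all $\tilde e_i,\tilde f_i$ and preserving weight, since the weight only records entry multiplicities and slides permute boxes without changing entries). Hence the composite $\rect$ is a crystal morphism $\LL_n(\mu) \to \LL_n(\nu')$ for whatever straight shape results. Restricting to the component $\LL_n(\mu)[X]$, the image lands in the component of $\LL_n(\nu')$ containing $\rect(X)$, which by definition of $\nu$ is $\LL_n(\nu) = \LL_n(\sh(\rect(X)))$ and is all of that (irreducible) crystal. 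Injectivity on $\LL_n(\mu)[X]$ follows because each slide $j_{c_t}$ is invertible (the reverse slide), so $\rect$ is globally injective on $\LL_n(\mu)$. Therefore $\rect|_{\LL_n(\mu)[X]}$ is an injective crystal morphism into the irreducible crystal $\LL_n(\nu)$; since its image is a nonempty sub-object of a connected crystal and crystal morphisms between normal crystals have closed image, the image is all of $\LL_n(\nu)$, giving the isomorphism. This simultaneously completes Step 1, since $|\LL_n(\mu)[X]| = |\LL_n(\nu)| = |[X]_\approx|$ and $\LL_n(\mu)[X] \subseteq [X]_\approx$.

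\textbf{Main obstacle.} The technical crux is the commutation of jeu-de-taquin slides with the Kashiwara operators — i.e.\ that each $j_c$ genuinely is a morphism of $I$-crystals, not merely a shape-preserving bijection. This is where all the work lies; it is essentially the statement that the "coplactic" (dual Knuth / crystal) structure is invariant under slides, which is a theorem I would cite from \cite{crystals} (or attribute to Lascoux--Sch\"utzenberger / Haiman / Assaf) rather than reprove. Granting that, everything else is formal: slides are invertible, compositions of isomorphisms are isomorphisms, and irreducibility of $\LL_n(\nu)$ pins down the target. A minor care point is bookkeeping the skew shapes through the slide sequence and confirming $\LL_n(\mu)$ is normal so that "injective morphism with image a full component" upgrades to "isomorphism onto a component"; I would handle this with the general fact that a morphism of normal crystals restricts to an isomorphism on each connected component it does not collapse.
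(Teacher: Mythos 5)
Your overall route is the same as the paper's (cite the commutation of jeu-de-taquin slides with the Kashiwara operators, as in \cite[Theorem 3.3.1]{crystal-jdt}, plus Haiman's Theorem 2.13), but one step as written is false: the claim in Step 2 that ``$\rect$ is globally injective on $\LL_n(\mu)$'' because each slide is invertible. Individual slides are invertible only if you remember which outer corner was vacated; forgetting that data, $\rect$ is not injective on $\SSYT_n(\mu)$ in general. Indeed, for a fixed $T\in\SSYT_n(\nu)$ the fibre $\rect^{-1}(T)$ inside $\SSYT_n(\lambda/\sigma)$ has cardinality equal to the Littlewood--Richardson coefficient $c^{\lambda}_{\sigma\nu}$, which can exceed $1$: for example $\mu=(3,2,1)/(2,1)$ and $\nu=(2,1)$ give $c^{(3,2,1)}_{(2,1),(2,1)}=2$, so two skew tableaux of the same shape (lying in different dual equivalence classes, hence different components) have the same rectification. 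So the justification you give for injectivity on the component would fail.

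The gap is easily repaired with material you already invoke, and the repair is exactly what the paper does: by your Step 1 observation that the crystal operators preserve dual equivalence, the component $\LL_n(\mu)[X]$ is contained in the dual equivalence class of $X$, and Haiman's theorem says $\rect$ restricts to a bijection from that class onto $\SSYT_n(\nu)$; injectivity on the component (indeed bijectivity onto $\LL_n(\nu)$, after your counting argument or the paper's direct appeal to Haiman) follows at once, with no need for any global injectivity statement. The remaining scaffolding in your write-up (surjectivity via ``image of a component is a component'' and the final cardinality comparison identifying the component with the class) is sound, and the bookkeeping point you flag about slide sequences is harmless since the crystal operators preserve shape, so the same corner sequence rectifies $X$ and $f_i\cdot X$; the paper compresses all of this into the single line $\rect(f_i\cdot X)=f_i\cdot\rect(X)$ together with the citation of \cite[Theorem 2.13]{haiman}.
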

\begin{proof}
    For any $1\leq i\leq n$, the operator $f_i$ on tableaux commutes with jeu-de-taquin slides \cite[Theorem 3.3.1]{crystal-jdt}. Hence, $\rect(f_i\cdot X)=f_i\cdot\rect(X)$ since $\rect(-)$ can be described as a composition of slides. The same holds for $e_i$. Moreover, it is easy to see that rectification preserves the weight. Finally, this map is an isomorphism since $\rect(-)$ is a bijection between the dual equivalence class of $X$ and $\SSYT_n(\nu)$ \cite[Theorem 2.13]{haiman}.
\end{proof}

\begin{Corollary}\label{cor:skew-crystal}
The crystal $\LL_n(\mu)[X]$ has highest weight $\sh(\rect(X))$, and $\xi_I$ acts on $\LL_n(\mu)$ by sending $X$ to the unique tableau $Y$ such that $Y\approx X$ and $\rect(Y)=\ev(\rect(X))$.
\end{Corollary}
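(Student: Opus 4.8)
The plan is to deduce both claims from Proposition \ref{prop:skew-crystal}, which identifies $\LL_n(\mu)[X]$ with the irreducible crystal $\LL_n(\nu)$ for $\nu := \sh(\rect(X))$ via the map $\rect(-)$. For the highest-weight claim I would simply observe that a crystal isomorphism preserves highest weights, so the highest weight of $\LL_n(\mu)[X]$ coincides with that of $\LL_n(\nu)$, which is $\nu = \sh(\rect(X))$.

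For the statement about $\xi_I$, the key point I would establish is that the generalised Sch\"utzenberger involution is compatible with crystal isomorphisms. This follows from its characterisation by properties (1)--(3) in Section \ref{sec:2}: if $\phi$ is a crystal isomorphism, then $\phi$ preserves weights and intertwines the Kashiwara operators, so $\phi \circ \xi_I \circ \phi^{-1}$ again satisfies properties (1)--(3) and therefore equals $\xi_I$ by uniqueness. I would also note that $\xi_I$ preserves each connected component, since it is defined component-wise on the decomposition of a normal crystal into irreducibles. Applying these two facts to $\phi = \rect : \LL_n(\mu)[X] \to \LL_n(\nu)$, and using that $\xi_I$ acts on the irreducible crystal $\LL_n(\nu)$ by the evacuation operator $\ev$ (as recalled before Example \ref{ex:evac}), I obtain $\rect(\xi_I(X')) = \ev(\rect(X'))$ for every $X'$ in the component of $X$.

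It then remains to identify $Y := \xi_I(X)$ as the tableau in the statement. Since $\xi_I$ preserves connected components, $Y \in \LL_n(\mu)[X]$, so $Y \approx X$ by Proposition \ref{prop:skew-crystal}, and the displayed identity with $X' = X$ gives $\rect(Y) = \ev(\rect(X))$. For uniqueness, any $Y'$ with $Y' \approx X$ lies in $\LL_n(\mu)[X]$, on which $\rect(-)$ is injective, being a bijection onto $\SSYT_n(\nu)$ by \cite[Theorem 2.13]{haiman}; hence $\rect(Y') = \rect(Y)$ forces $Y' = Y$.

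Since the whole argument is a short chain of consequences of Proposition \ref{prop:skew-crystal}, I do not expect any serious obstacle; the only point requiring a moment's care is the verification that $\xi_I$ is compatible with crystal isomorphisms, which is exactly where the intrinsic characterisation (1)--(3) of $\xi_I$ does the work (as opposed to, say, a recursive definition tied to a particular model).
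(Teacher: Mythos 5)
Your argument is correct and follows the same route the paper intends: the corollary is stated as an immediate consequence of Proposition \ref{prop:skew-crystal}, using exactly the facts you spell out (crystal isomorphisms preserve highest weights, $\xi_I$ is component-preserving and intertwines with isomorphisms by its intrinsic characterisation, and $\rect$ is injective on the dual equivalence class). Your write-up just makes explicit the details the paper leaves to the reader.
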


For a semistandard (skew) tableau $X$, set $\ev(X):=\xi_I\cdot X$. We can calculate $\ev(-)$ by rectifying, evacuating, and then `unrectifying' backward along the same path to preserve dual equivalence. We continue to call this map evacuation for skew tableaux, although the term \emph{reversal} is also used in the literature \cite{switching}. 

\subsubsection{Levi branching} Fix $I':=I_{[a,b]}\subseteq I$ for $1\leq a<b\leq n$ and set $\LL_n'(\lambda):=\Res_{I'}\LL_n(\lambda)$. We wish to calculate the highest-weight and action of $\xi_{I'}$ on $\LL_n'(\lambda)$, from which we can describe $\leq_{I'}$ and $\xi_{I'}$ on $\LL_n(\lambda)$.

For a (skew) tableau $T$, let $T_{[a,b]}$ denote the tableau obtained by restricting $T$ to the boxes with labels in $\{a,\dots,b\}$, and subtracting $a-1$ from each value. For example:

\[T=\young(11223,334,45)\implies T_{[2,4]}=\young(::112,223,3)\]

Observe that acting by $f_i$ on $T$ for $i\in I'$ only affects boxes with labels in $\{a,\dots,b\}$. In particular, the restriction map $(-)_{[a,b]}$ gives a crystals isomorphism
\[\LL_n'(\lambda)[T]\xrightarrow{\sim}\LL_{b-a+1}(\mu)[T_{[a,b]}],\]
where $\mu:=\sh(T_{[a,b]})$. Applying Corollary \ref{cor:skew-crystal} to the right-hand side of this crystal isomorphism gives the following:

\begin{itemize}
\item The highest-weight element of $\LL_n'(\lambda)[T]$ has weight $\tau_{I'}(T)=\sh(\rect(T_{[a,b]}))$.
\item For $R,T\in\LL_n(\lambda)$, $R\leq_{I'}T$ if and only if $\sh(\rect(R_{[a,b]}))\preceq \sh(\rect(T_{[a,b]}))$ in the dominance order on partitions.
\item For $T\in\LL_n(\lambda)$, $\xi_{I'}\cdot T$ is the unique tableau $R$ such that $R\sim_{I'}T$ and $R_{[a,b]}=\ev(T_{[a,b]})$.
\end{itemize}

Set $\ev_{I'}(T):=\xi_{I'}\cdot T$ for every $T\in\LL_n(\lambda)$. We observe that $\ev_{I'}(T)$ can be calculated by replacing $T_{[a,b]}$ with its evacuation (up to shifting by $a-1$), and preserving the rest of the tableau. This fact can also be proved using growth diagrams \cite{cpp}. We give an example below.

\begin{Example}
Set $n=5$ and $I'=I_{[2,4]}$. We have
\[\young(::1133,123,2)\quad\xrightarrow{\quad\ev\quad}\quad \young(::1122,133,3)\]
and hence
\[\young(112244,2345,355)\quad\xrightarrow{\,\,\,\,\,\ev_{I'}\,\,\,\,\,}\quad\young(112233,2445,455)\]
\end{Example}

To extend this to subdiagrams with multiple components, we first define the external products of crystals. Given a normal $I_1$-crystal $\SS_1$ and a normal $I_2$-crystal $\SS_2$, let $\SS_1\boxtimes\SS_2$ be the normal $(I_1\sqcup I_2)$-crystal with objects $x_1\boxtimes x_2$ for $x_1\in\SS_1$ and $x_2\in\SS_2$. The crystal operator $f_i$ acts by
\[f_i(x_1\boxtimes x_2)=\begin{cases}f_i(x_1)\boxtimes x_2& i\in I_1\\
x_1\boxtimes f_i(x_2) & i\in I_2\end{cases}\]
for any $i\in I_1\sqcup I_2$ and similarly for $e_i$. Moreover, $\wt(x_1\boxtimes x_2)\mapsto (\wt(x_1),\wt(x_2))\in X_{I_1}\times X_{I_2}$. From this, we observe that $\tau_{I_1\sqcup I_2}(x_1\boxtimes x_2)=(\tau_{I_1}(x_1),\tau_{I_2}(x_2))$, and the Sch\"utzenberger operator $\xi_{I_1\sqcup I_2}$ acts by $x_1\boxtimes x_2\mapsto \xi_{I_1}(x_1)\boxtimes \xi_{I_2}(x_2)$. The external product is associative and we extend the definition to finite products $\SS_1\boxtimes\cdots\boxtimes\SS_m$.

We now consider a general subset $I'=I_{[a_1,b_1]}\sqcup\cdots\sqcup I_{[a_m,b_m]}\subseteq I$. We observe that $\LL_n'(\lambda):=\Res_{I'}\LL_n(\lambda)$ has a natural decomposition into products of skew crystals.

\begin{Lemma}
Fix $I'\subseteq I$ as above and $T\in\LL_n(\lambda)$. Set $\mu_k:=\sh(T_{[a_k,b_k]})$ and $n_k:=b_k-a_k+1$ for every $1\leq k\leq m$. The map $R\mapsto R_{[a_1,b_1]}\boxtimes\cdots\boxtimes R_{[a_m,b_m]}$ gives an isomorphism of $I'$-crystals
\[\LL_n'(\lambda)[T]\xrightarrow{\sim}\LL_{n_1}(\mu_1)[T_{[a_1,b_1]}]\boxtimes\cdots\boxtimes \LL_{n_m}(\mu_m)[T_{[a_m,b_m]}].\]
\end{Lemma}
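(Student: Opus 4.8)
The plan is to reduce the statement to the single-interval case already established and then invoke the external-product structure. First I would recall that restricting an $I$-crystal to the subdiagram $I'=I_{[a_1,b_1]}\sqcup\cdots\sqcup I_{[a_m,b_m]}$ simply forgets the Kashiwara operators $\tilde{e}_i,\tilde{f}_i$ for $i\notin I'$, and that for $i\in I_{[a_k,b_k]}$ the operator acts on a tableau $R$ only through the boxes with labels in $\{a_k,\dots,b_k\}$ (exactly as noted in the text just above for the single-component case). Hence for distinct $k$ the operators indexed by $I_{[a_k,b_k]}$ and $I_{[a_{k'},b_{k'}]}$ act on disjoint portions of the tableau, so they commute, and a connected component of $\Res_{I'}\LL_n(\lambda)$ is obtained by independently moving within each label-window.

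Next I would define the candidate map $\Phi\colon R\mapsto R_{[a_1,b_1]}\boxtimes\cdots\boxtimes R_{[a_m,b_m]}$ and check it is well defined with the stated target: starting from $T$, and using that the restriction $R_{[a_k,b_k]}$ only changes when we apply $\tilde{f}_i$ or $\tilde{e}_i$ for $i\in I_{[a_k,b_k]}$, one sees by the single-component isomorphism $\LL_n'(\lambda)[T']\xrightarrow{\sim}\LL_{n_k}(\mu_k)[T'_{[a_k,b_k]}]$ (established in the text, with $\mu_k=\sh(T_{[a_k,b_k]})$ independent of which $T'$ in the component we pick, since dual equivalence class and hence shape is constant on a component) that $R_{[a_k,b_k]}$ ranges over $\LL_{n_k}(\mu_k)[T_{[a_k,b_k]}]$. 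So $\Phi$ lands in the asserted external product. Intertwining with the crystal operators is immediate from the case distinction in the definition of $\boxtimes$: for $i\in I_{[a_k,b_k]}$, $\tilde{f}_i$ on the left corresponds to $\tilde{f}_i$ applied in the $k$-th tensor factor and identity elsewhere, which is exactly how $\tilde{f}_i$ acts on $\SS_1\boxtimes\cdots\boxtimes\SS_m$; and the weight is matched factor-by-factor because $\wt(R)$ restricted to $\fh_{I_{[a_k,b_k]}}$ is recorded by the entry-counts of $R_{[a_k,b_k]}$.

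For surjectivity and injectivity, I would argue that both sides are connected $I'$-crystals (the right side is a product of connected crystals, hence connected), that $\Phi$ maps the distinguished object $T$ to $T_{[a_1,b_1]}\boxtimes\cdots\boxtimes T_{[a_m,b_m]}$, and that $\Phi$ commutes with all $\tilde{e}_i,\tilde{f}_i$; since a morphism of connected normal crystals sending a chosen vertex to a chosen vertex and commuting with the operators is forced to be an isomorphism, we are done. Alternatively one can exhibit the inverse explicitly: given $(S_1,\dots,S_m)$ with $S_k\in\LL_{n_k}(\mu_k)[T_{[a_k,b_k]}]$, glue them back into the ambient shape $\lambda$ by overwriting the label-$\{a_k,\dots,b_k\}$ region of $T$ with $S_k$ shifted by $a_k-1$, noting that these regions are disjoint and that the single-component statement guarantees the result is a legitimate object of $\LL_n(\lambda)[T]$.

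The one genuine point to be careful about — and the step I expect to be the main obstacle — is verifying that the $m$ label-windows really do behave independently inside a \emph{skew} or non-normal situation, i.e.\ that overwriting one window does not disturb semistandardness with respect to the others and that the resulting tableau stays in the same component $\LL_n(\lambda)[T]$. This is clear for the operators themselves (they only touch one window), but for the explicit inverse one should confirm that the ``glued'' tableau is reached from $T$ by a sequence of moves each supported in a single window, which follows by composing the $m$ single-component isomorphisms one window at a time. Once that bookkeeping is in place, everything else is a direct consequence of the preceding single-interval analysis and the definition of $\boxtimes$.
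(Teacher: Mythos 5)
Your proposal is correct and follows essentially the same route as the paper: the heart of both arguments is the observation that an operator indexed by $I_{[a_k,b_k]}$ only touches boxes with labels in $\{a_k,\dots,b_k\}$, so the restriction maps $\rho_l\colon R\mapsto R_{[a_l,b_l]}$ intertwine the operators componentwise (together with the evident weight preservation). The extra care you take with well-definedness of the target component and with bijectivity (via connectedness of the external product, or an explicit gluing inverse) is just making explicit what the paper leaves implicit after verifying the intertwining relations.
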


\begin{proof}
We treat $\LL_n'(\lambda)$ as a crystal with Dynkin type $\mathrm{A}_{n_1-1}\sqcup\cdots\sqcup\mathrm{A}_{n_m-1}$, where $a_k+i\in I'$ for $1\leq i<n_k$ corresponds to $i\in \mathrm{A}_{n_k-1}$. It is clear that this crystal map preserves weights. We are required to show that acting by $f_{a_k+i}$ on $\LL_n'(\lambda)$ is equivalent to acting by $f_i$ on the component $\LL_{n_k}(\mu_k)$ under the map.

Let $\rho_k$ denote the restriction map $R\mapsto R_{[a_k,b_k]}$ for any $R\in\LL_n(\lambda)$. Note that acting by $f_{a_k+i}\in I_{[a_k,b_k]}$ for $1\leq i<n_i$ only affects tableaux on the labels $a_k,\dots,b_k$. Hence:
\[\rho_lf_{a_k+i}=\begin{cases}
\rho_l & l\neq k \\
f_i\rho_k & l=k 
\end{cases}\]
as required.
\end{proof}

We again apply Corollary \ref{cor:skew-crystal}, this time on each component of the external product from the decomposition of $\LL_n'(\lambda)[T]$ in the above Lemma.

\begin{Proposition}
Fix $T\in\LL_n(\lambda)$ and $I'\subseteq I$ as above.
\begin{enumerate}
\item The connected component of $\LL_n'(\lambda)[T]$ consists of $R\in\LL_n(\lambda)$ satisfying $R_{[a_k,b_k]}\approx T_{[a_k,b_k]}$ for every $1\leq k\leq m$.
\item This component has highest-weight $(\nu_1,\dots,\nu_m)$, where $\nu_k:=\sh(\rect(T_{[a_k,b_k]}))$.
\item The Sch\"utzenberger involution $\xi_{I'}$ acts on this component by $\ev_{I_{[a_1,b_1]}}\cdots\ev_{I_{[a_m,b_m]}}$.
\end{enumerate}
\end{Proposition}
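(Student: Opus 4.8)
The proposition is an application of the preceding Lemma together with Corollary~\ref{cor:skew-crystal}, transported through the external product $\boxtimes$. The plan is to handle the three parts in order, each time reducing the statement about $\LL_n'(\lambda)[T]$ to the corresponding statement about the factors $\LL_{n_k}(\mu_k)[T_{[a_k,b_k]}]$ via the isomorphism of $I'$-crystals established above.

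For part (1), I would first record the general fact that in any external product $\SS_1\boxtimes\cdots\boxtimes\SS_m$ of normal crystals the connected component containing $x_1\boxtimes\cdots\boxtimes x_m$ is exactly $\SS_1[x_1]\boxtimes\cdots\boxtimes\SS_m[x_m]$; this is immediate from the componentwise formula for $e_i$ and $f_i$ on $\boxtimes$. Pushing this through the isomorphism of the preceding Lemma, an element $R\in\LL_n(\lambda)$ lies in $\LL_n'(\lambda)[T]$ precisely when $R_{[a_k,b_k]}$ lies in $\LL_{n_k}(\mu_k)[T_{[a_k,b_k]}]$ for each $k$, which by Proposition~\ref{prop:skew-crystal} is the condition $R_{[a_k,b_k]}\approx T_{[a_k,b_k]}$.

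For part (2), I would use that the highest weight of $\SS_1\boxtimes\cdots\boxtimes\SS_m$ is the tuple of highest weights of the factors, since $\tau_{I_1\sqcup\cdots\sqcup I_m}(x_1\boxtimes\cdots\boxtimes x_m)=(\tau_{I_1}(x_1),\dots,\tau_{I_m}(x_m))$ as recorded in the discussion of external products; applying Corollary~\ref{cor:skew-crystal} to the $k$-th factor gives highest weight $\sh(\rect(T_{[a_k,b_k]}))=\nu_k$. For part (3), I would use that $\xi$ on an external product acts factorwise, $\xi_{I_1\sqcup\cdots\sqcup I_m}(x_1\boxtimes\cdots\boxtimes x_m)=\xi_{I_1}(x_1)\boxtimes\cdots\boxtimes\xi_{I_m}(x_m)$. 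Under the isomorphism of the Lemma this replaces each $R_{[a_k,b_k]}$ by the image of $\xi$ inside $\LL_{n_k}(\mu_k)$, which by Corollary~\ref{cor:skew-crystal} is evacuation of the skew tableau $T_{[a_k,b_k]}$, i.e. the local move $\ev_{I_{[a_k,b_k]}}$ characterised in the single-interval bullet points of the Levi branching discussion. Since the operations $\ev_{I_{[a_k,b_k]}}$ act on the pairwise-disjoint label sets $\{a_k,\dots,b_k\}$ they commute, so their composite $\ev_{I_{[a_1,b_1]}}\cdots\ev_{I_{[a_m,b_m]}}$ is well-defined and equals $\xi_{I'}$.

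I do not expect a genuine obstacle here; the only mildly delicate point is part (3), where one must check that the factorwise $\xi$ inside each $\LL_{n_k}(\mu_k)$ really coincides with the ambient move $\ev_{I_{[a_k,b_k]}}$ on $\LL_n(\lambda)$ after identifying along the restriction isomorphisms, rather than merely agreeing as abstract maps of crystals. This is exactly the content of the single-interval analysis preceding the $\boxtimes$ discussion, applied one factor at a time, so it is bookkeeping rather than new work.
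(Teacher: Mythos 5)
Your proposal is correct and follows the paper's own (very brief) justification exactly: the paper simply applies Corollary \ref{cor:skew-crystal} to each factor of the external-product decomposition of $\LL_n'(\lambda)[T]$ given by the preceding Lemma, using that connected components, highest weights and the Sch\"utzenberger involution are all computed factorwise on $\boxtimes$. Your expanded factor-by-factor bookkeeping, including the check that the factorwise $\xi$ agrees with the ambient moves $\ev_{I_{[a_k,b_k]}}$ under the restriction isomorphisms, is precisely the content the paper leaves implicit.
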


We set $\ev_{I'}(T):=\xi_{I'}(T)$ for any $T\in\LL_n(\lambda)$ as before. This can be calculated by evacuating $T_{[a_1,b_1]}$ (up to shift by $a_1-1$), evacuating $T_{[a_2,b_2]}$ (up to shift by $a_2-1$), and so on. From these results we immediately deduce our description of $\leq_{I'}$ and $\xi_{I'}$ for any $I'\subseteq I$, and combine this with Theorem \ref{prop:specht-action-2}.

For the following result, fix $n\geq 1$ and $I'=I_{[a_1,b_1]}\sqcup\cdots\sqcup I_{[a_m,b_m]}\subseteq I$ as before. For $\lambda\vdash n$ and $R,T\in\SYT(\lambda)$, set $R\leq_{I'}T$ if and only if $\sh(\rect(R_{[a_k,b_k]}))\preceq\sh(\rect(T_{[a_k,b_k]}))$ for every $1\leq k\leq m$.

\begin{Corollary}\label{cor:specht-action-3}
Let $w_{I'}$ be the permutation reversing $a_k,\dots,b_k$ for every $1\leq k\leq m$. For any $\lambda\vdash n$, associate $\SYT(\lambda)$ with $\KL_\lambda$ by $T\leftrightarrow C_T$. Then $w_{I'}$ acts on $(S^\lambda,\KL_\lambda,\leq_{I'})$ by $\ev_{I'}$ up to l.o.t.
\end{Corollary}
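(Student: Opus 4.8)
The plan is to obtain this as the $r=1$ specialisation of Theorem \ref{prop:specht-action-2}, after matching up the combinatorial data. First I would check that $w_{I'}$ is a separable permutation: taking the length-one chain $\calZ=(I')$ (legitimate since $I\supseteq I'$), the associated separable element $w_\calZ=w_{I'}$ is by definition the longest element of the parabolic subgroup $W_{I'}=S_{[a_1,b_1]}\times\cdots\times S_{[a_m,b_m]}$. Since the longest element of $S_{[a,b]}$ is the permutation reversing $a,\dots,b$ and longest elements of a direct product multiply componentwise, $w_{I'}$ is precisely the block-reversal permutation named in the statement.

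Next I would apply Theorem \ref{prop:specht-action-2} to $w=w_{I'}$ with this chain $\calZ$, which yields that $w_{I'}$ acts on $(S^\lambda,\KL_\lambda,\leq_\calZ)$ by $\xi_\calZ$ up to lower-order terms (in particular the sign condition of Definition \ref{def:lot} is already part of this conclusion). By Definition \ref{def:leq_Z} in the case $r=1$ we have $\leq_\calZ\,=\,\leq_{I'}$, and $\xi_\calZ=\xi_{I'}$ by construction. So it only remains to rewrite $\xi_{I'}$ and $\leq_{I'}$ on $\SYT(\lambda)$ (transported to $\KL_\lambda$ via $T\leftrightarrow C_T$) in the tableau language of the statement.

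This last step is exactly what the preceding portion of Section \ref{sec:TypeAcrystals} supplies. The equality $\xi_{I'}=\ev_{I'}$ is the definition of $\ev_{I'}$, and its explicit tableau form—evacuate each restricted tableau $T_{[a_k,b_k]}$ (up to the shift by $a_k-1$) while leaving the remaining entries of $T$ fixed—follows from part (3) of the Proposition preceding this corollary together with the single-block description of $\xi_{I_{[a,b]}}$ via $\ev$. For the preorder, I would decompose the connected component $\LL_n'(\lambda)[T]$ as the external tensor product $\LL_{n_1}(\mu_1)[T_{[a_1,b_1]}]\boxtimes\cdots\boxtimes\LL_{n_m}(\mu_m)[T_{[a_m,b_m]}]$ via the Lemma preceding this corollary, then apply Corollary \ref{cor:skew-crystal} factor by factor to identify the highest weight $\tau_{I'}(T)$ with $(\sh(\rect(T_{[a_1,b_1]})),\dots,\sh(\rect(T_{[a_m,b_m]})))$; since $\preceq_{I'}$ on $\fsl$-weights restricts to the dominance order on partitions in each component, Definition \ref{def:leq_I} unwinds to exactly the stated relation $\sh(\rect(R_{[a_k,b_k]}))\preceq\sh(\rect(T_{[a_k,b_k]}))$ for all $k$. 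I do not expect a genuine obstacle here; the only points requiring care are bookkeeping ones—identifying the abstract longest element of $W_{I'}$ with the concrete block-reversal permutation, and matching $\preceq_{I'}$ on weights with dominance order on partitions.
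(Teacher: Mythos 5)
Your argument is correct and follows essentially the same route as the paper: specialise Theorem \ref{prop:specht-action-2} to the length-one chain $\calZ=(I')$, identify $w_{I'}$ as the longest element of the parabolic subgroup determined by $I'$, and translate $\xi_{I'}$ and $\leq_{I'}$ into $\ev_{I'}$ and the rectification-shape dominance preorder using the crystal-combinatorial results established just before the corollary. The paper's proof is simply a terser statement of exactly these steps.
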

\begin{proof}
    The element $w_{I'}$ is the longest element of the parabolic subgroup of $S_n$ induced by $I'$. Under the association of $\KL_\lambda$ with the dual canonical basis for $L(\lambda)_0$, we have shown that $\xi_{I'}$ acts by $C_T\mapsto C_{\ev_{I'}(T)}$ and $C_R\leq_{I'}C_T$ (in the crystal ordering) if and only if $R\leq_{I'}T$. Then apply Theorem \ref{prop:specht-action-2}.
\end{proof}

If $\calZ=(I_1,\dots,I_r)$ is a descending chain, we can extend the above result to all separable permutations $w_\calZ:=w_{I_1}\cdots w_{I_r}$ by replacing $\ev_{I'}$ with $\ev_\calZ:=\ev_{I_1}\cdots\ev_{I_r}$ and $\leq_{I'}$ with $\leq_{\calZ}$ using Definition \ref{def:leq_Z}.

\subsection{Generalisation of Rhoades' Theorem and other examples}\label{sec:KLexamples}
We consider a number of special cases of Corollary \ref{cor:specht-action-3}.

\begin{Example}
When $I'=I$, $w_{I}$ is the longest element $w_0$, $\ev_{I}=\ev$ and $\leq_{I}$ is the trivial preorder. Hence, $w_0\cdot C_T=\pm C_{\ev(T)}$ for all $T\in\SYT(\lambda)$ with constant sign, recovering a well-known result proved by Graham \cite{graham}, Stembridge \cite{S} and Berenstein--Zelevinsky \cite{BZ}.
\end{Example}

\begin{Example}
Next, set $J:=I_{[1,n-1]}$ and $\calZ:=(I,J)$. Then $w_{\calZ}$ is the long cycle $c_n=(1,2,\hdots,n)$, or equivalently, the Coxeter element $s_1\cdots s_{n-1}$. It was observed by Rush \cite{rush} that $\ev_{\calZ}=\ev_{I}\ev_{J}$ is the promotion map $\pr$. The preorder $\leq_{\calZ}$ is equivalent to $\leq_{J}$, and $R\leq_{J}T$ if and only if the $n$-box occurs in a higher row of $R$ than it occurs in $T$. When $\lambda$ is a rectangular shape, the $n$-box always occurs in the same position and the order $\leq_{\calZ}$ is trivial. Hence, we have $c_n\cdot C_T=\pm C_{\pr(T)}$ for every $T\in\SYT(\lambda)$ with constant sign, obtaining Rhoades' Theorem \cite{R}. By Corollary \ref{cor:specht-action-3} we can now generalise this formula to arbitrary shapes.

\begin{Proposition}\label{prop:Rhoades}Fix $\lambda\vdash n$ and $T\in\SYT(\lambda)$. The Coxeter element $c_n$ acts on $(S^\la,\KL_\la,\leq)$ by $\pr$ up to l.o.t., where $\leq$ is the ordering given by comparing the position of the $n$-box. 
\end{Proposition}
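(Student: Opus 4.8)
The plan is to obtain this as the special case of the chain version of Corollary~\ref{cor:specht-action-3} attached to $\calZ = (I, J)$ with $I = \mathrm{A}_{n-1}$ and $J = I_{[1,n-1]}$. First I would check that $w_\calZ = c_n$: since $w_I = w_0$ sends $i \mapsto n+1-i$ and $w_J$ sends $i \mapsto n-i$ for $1 \le i \le n-1$ while fixing $n$, the product $w_\calZ = w_I w_J$ sends $i \mapsto i+1$ for $i < n$ and $n \mapsto 1$, i.e.\ $w_\calZ = (1,2,\dots,n) = c_n$, which is also the Coxeter element $s_1 \cdots s_{n-1}$. With this in hand, the chain version of Corollary~\ref{cor:specht-action-3} says that $c_n$ acts on $(S^\lambda, \KL_\lambda, \leq_\calZ)$ by $\ev_\calZ = \ev_I \circ \ev_J$ up to l.o.t., and I would then invoke Rush's observation \cite{rush} that $\ev_I \circ \ev_J = \pr$, the promotion operator on $\SYT(\lambda)$; this yields the claimed bijection.

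It then remains to identify $\leq_\calZ$ with the order comparing the row of the $n$-box. By Definition~\ref{def:leq_Z}, $\leq_\calZ$ refines $\leq_I$ by $\leq_J$ on each $\sim_I$-class; but every standard tableau of shape $\lambda$ lies in the single irreducible crystal $\LL_n(\lambda)$, so $\tau_I$ is constant and $\leq_I$ is trivial, whence $\leq_\calZ = \leq_J$. By the description of $\leq_{I'}$ preceding Corollary~\ref{cor:specht-action-3}, $R \leq_J T$ iff $\sh(\rect(R_{[1,n-1]})) \preceq \sh(\rect(T_{[1,n-1]}))$ in dominance order. Here $R_{[1,n-1]}$ is just $R$ with its $n$-box deleted; since the $n$-box is a removable corner of $\lambda$, this is already of straight shape, so $\rect$ acts trivially and $\sh(\rect(R_{[1,n-1]}))$ is $\lambda$ with one box removed from the row of $R$ containing $n$. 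A one-line partial-sum computation shows that, for removable corners $c, c'$ of $\lambda$, one has $\lambda \setminus c \preceq \lambda \setminus c'$ in dominance precisely when $c$ is weakly higher than $c'$; hence $R \leq_J T$ iff the $n$-box of $R$ is weakly higher than the $n$-box of $T$, which is exactly the preorder $\leq$ in the statement. Assembling these three identifications with the chain version of Corollary~\ref{cor:specht-action-3} finishes the proof.

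Essentially all of this is an unwinding of definitions already set up in Section~\ref{sec:TypeAcrystals}; the only input with genuine content is the combinatorial identity $\ev_I \circ \ev_J = \pr$, which is classical and which I would simply cite to Rush rather than reprove.
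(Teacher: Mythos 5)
Your proposal is correct and follows essentially the same route as the paper: the paper also takes $\calZ=(I,I_{[1,n-1]})$, applies the chain form of Corollary \ref{cor:specht-action-3}, cites Rush for $\ev_I\ev_J=\pr$, and identifies $\leq_\calZ=\leq_J$ with the comparison of the row of the $n$-box. The only difference is that you spell out the verifications ($w_Iw_J=c_n$ and the dominance comparison of $\lambda$ minus a corner) that the paper leaves implicit, and these checks are accurate.
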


\begin{Remark}
This result was first announced (with a combinatorial proof) by the authors at FPSAC 2022 \emph{\cite{GY}}.
\end{Remark}

We note that every power of the long cycle $c_n^k$ is a separable permutation, and so it acts on $(S^\lambda,\KL_\lambda)$ by some bijection up to lower-order terms. On rectangular tableaux, this bijection will be $\pr^k$ by Rhoades' Theorem. However, this is not true for general shapes.
\end{Example}

\begin{Example}\label{ex:GT}
Consider the chain $\calZ=(I_1,I_{2},\hdots,I_n)$  where $I_k = I_{[1,n-k+1]}$. 
This corresponds to the standard tower of symmetric groups which 
appears e.g.\ in constructions of Gelfand--Tsetlin bases of $S^\lambda$. 
The corresponding separable permutation $g_n:=w_{I_1}\cdots w_{I_n}$ is the permutations whose inverse $g_n^{-1}$ is given by (in one-line notation): 
\begin{align*}
&n\; n-2\;  \cdots \; 2\;1 \; 3\; \cdots \text{ if } n \text{ is even,} \\
&n\; n-2\;  \cdots \;1 \; 2 \; 4\; \cdots \text{ if } n \text{ is odd.}
\end{align*}
For example, $g_4=3241$ and $g_5=53124$.

The preorder $\leq_{\calZ}$ on $\SYT(\lambda)$ becomes a linear order, determined as follows: to each tableau $T$ assign the sequence $(r_n,\dots,r_1)$ with $r_i$ the row of $T$ containing $i$, and order these sequences lexicographically. For example, if $\lambda=(3,2)$ the standard Young tableaux from smallest to largest are:

\[\begin{tikzpicture}
\node (T1) at (0,0) {\young(135,24)};
\node (T2) at (2.5,0) {\young(125,34)};
\node (T3) at (5,0) {\young(134,25)};
\node (T4) at (7.5,0) {\young(124,35)};
\node (T5) at (10,0) {\young(123,45)};
\end{tikzpicture}\]
\noindent Note that this order linearises the dominance order on tableaux. The bijection $\xi_{\calZ}$ is given by an operation $\nev$ we'll call ``nested evacuation'', where $\nev(T)$ is obtained from $T$ as follows: first evacuate the smallest box, then evacuate the smallest two boxes, and so on until we evacuate the entire tableaux.  

\begin{Remark}
    Unlike the previous examples, where the bijection $\xi_{\calZ}$ recovered well-studied operations in algebraic combinatorics (namely the evacuation and promotion operators), we are not aware of nested evacuation appearing elsewhere in the combinatorics literature.
\end{Remark}

By Corollary \ref{cor:specht-action-3} we obtain:

\begin{Proposition}
Fix $\lambda\vdash n$ and $T\in\SYT(\lambda)$. The permutation $g_n$ acts on $(S^\la,\KL_\la,\leq)$ by $\nev$ up to l.o.t., where $<$ is the ordering given by comparing the position of the $n$-box, then comparing the position of the $(n-1)$-box, and so on. 
\end{Proposition}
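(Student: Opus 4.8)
The plan is to obtain this as a direct specialisation of Corollary~\ref{cor:specht-action-3}, extended to descending chains as noted just after its proof, applied to the chain $\calZ=(I_1,\dots,I_n)$ with $I_k=I_{[1,n-k+1]}$, followed by an unwinding of the two combinatorial outputs it produces for this chain. Since $g_n=w_{I_1}\cdots w_{I_n}=w_\calZ$ is precisely the separable element attached to $\calZ$, that corollary already says that $g_n$ acts on $(S^\lambda,\KL_\lambda,\leq_\calZ)$ by $\ev_\calZ=\ev_{I_1}\circ\cdots\circ\ev_{I_n}$ up to l.o.t. It therefore remains only to check that, for this chain, $\ev_\calZ$ coincides with nested evacuation $\nev$ and $\leq_\calZ$ coincides with the order described in the statement.

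For the bijection, I would recall that $\ev_{I_k}=\ev_{I_{[1,n-k+1]}}$ acts on a standard tableau $T$ by replacing the subtableau $T_{[1,n-k+1]}$ on the entries $\{1,\dots,n-k+1\}$ with its evacuation (there is no shift, since here $a_1=1$) and fixing all larger entries. Evaluating $\ev_\calZ$ from the right therefore applies, successively, $\ev_{I_n}=\ev_{I_{[1,1]}}$ (evacuation of the single smallest box, a no-op), then $\ev_{I_{n-1}}=\ev_{I_{[1,2]}}$ (evacuation of the two smallest boxes), and so on, finishing with $\ev_{I_1}=\ev_{I_{[1,n]}}=\ev$ (evacuation of the whole tableau). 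This is exactly the recipe defining $\nev$.

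For the preorder, Definition~\ref{def:leq_Z} makes $\leq_\calZ$ the lexicographic refinement of $\leq_{I_1},\leq_{I_2},\dots,\leq_{I_n}$, and since $\LL_n(\lambda)$ is irreducible the outermost relation $\leq_{I_1}=\leq_I$ is trivial (as is $\leq_{I_n}$, because $I_n=I_{[1,1]}=\emptyset$). The key observation for the middle terms is that for a standard $T$ and any $m$ the subtableau $T_{[1,m]}$ already has straight shape, so $\rect(T_{[1,m]})=T_{[1,m]}$ and $\sh(T_{[1,m]})$ is simply $\lambda$ with the boxes of $n,n-1,\dots,m+1$ deleted. Combining the bullet-point description of $\leq_{I_k}$ for $I_k=I_{[1,n-k+1]}$ with the elementary partition fact that deleting a corner of a partition $\mu$ from row $i$ yields a dominance-smaller partition than deleting one from row $j$ exactly when $i<j$, one finds that, for $R,T$ already agreeing on the rows of $n,n-1,\dots,n-k+3$, the relation $R<_{I_k}T$ holds iff the $(n-k+2)$-box of $R$ lies in a strictly higher row than that of $T$. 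Chaining these comparisons through the chain identifies $\leq_\calZ$ with the order comparing the row of the $n$-box, then the row of the $(n-1)$-box, and so on; it is moreover a total order, since a standard tableau is reconstructed from its shape together with this sequence of rows by placing each entry in turn at the last available box of its prescribed row. Applying Corollary~\ref{cor:specht-action-3} then yields the Proposition.

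The step I expect to require the most care is this last paragraph: one must confirm that the iterated restrictions $T_{[1,m]}$ remain of straight shape (so that rectification is the identity and the relevant shapes are literally $\lambda$ with the boxes of the top few entries removed), and that the dominance comparison defining each $\leq_{I_k}$ really does collapse to comparing a single row index. The rest of the argument is a routine translation of Corollary~\ref{cor:specht-action-3} through the definitions of $\ev_\calZ$ and $\leq_\calZ$.
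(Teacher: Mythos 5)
Your proposal is correct and follows the same route as the paper: the paper also obtains this Proposition by specialising Corollary~\ref{cor:specht-action-3} (in its chain form) to $\calZ=(I_1,\dots,I_n)$ with $I_k=I_{[1,n-k+1]}$, identifying $\ev_\calZ$ with nested evacuation and $\leq_\calZ$ with the lexicographic comparison of the rows of the $n$-box, $(n-1)$-box, etc. You simply spell out the combinatorial identifications (straight shapes of $T_{[1,m]}$, the corner-deletion dominance fact, and the right-to-left composition giving $\nev$) that the paper states without detailed verification, and these checks are accurate.
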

\end{Example}

\begin{Example}
Finally, we consider the action of a single generator. Every irreducible normal crystal of type $\mathrm{A}_1$ is isomorphic to $\LL_2(\lambda)$ for some partition $\lambda=(n)$ with $n\geq 0$ (when $n=0$ we get the crystal with a single object of weight zero). The weight-zero subset of this crystal is empty when $\lambda$ has an odd number of boxes, and contains a single element otherwise. 

We return briefly to the general setup of Section \ref{sect:crystal1}. Let $W$ be a simply-laced Weyl group with Dynkin diagram $I$ and $(U,\BB)$ a based $W$-representation arising from categorification.

\begin{Proposition}\label{prop:generator-action}For $i\in I$, the generator $s_i$ acts on $(U,\BB)$ by the identity up to l.o.t.
\end{Proposition}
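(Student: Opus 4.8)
The plan is to recognise $s_i$ as the separable element associated to the length-one chain $\calZ=(\{i\})$: the parabolic subgroup $W_{\{i\}}=\langle s_i\rangle$ has longest element $w_{\{i\}}=s_i$, so $s_i\in W^{\mathrm{sep}}$. Theorem \ref{thm:gencomb} then applies directly and shows that $s_i$ acts on $(U,\BB,\leq_{\{i\}})$ by the bijection $\xi_{\{i\}}$ up to lower-order terms. Hence the proposition reduces to the single claim that $\xi_{\{i\}}$ restricts to the identity on $\BB=\SS_0$.

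To prove this claim I would work inside the $A_1$-crystal $\Res_{\{i\}}(\SS)$. Being normal, it is a disjoint union of connected components, each of which is a string; since $\theta_{\{i\}}=\mathrm{id}$, the defining properties of the Sch\"utzenberger involution show that $\xi_{\{i\}}$ preserves each component $\LL_2((n))$ and acts on its enumeration $b_0\xrightarrow{\tilde{f}_i}b_1\xrightarrow{\tilde{f}_i}\cdots\xrightarrow{\tilde{f}_i}b_n$ by the reversal $b_k\mapsto b_{n-k}$. On $\LL_2((n))$ the $\fsl_2$-weights are $n,n-2,\dots,-n$, so the string contains a weight-zero element precisely when $n$ is even, in which case that element is unique and equal to the central term $b_{n/2}$, which is fixed by the reversal. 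Now every $b\in\SS_0$ satisfies $\wt(b)=0$, so in particular $\langle h_i,\wt(b)\rangle=0$ and $b$ is the unique weight-zero element of its own $A_1$-string; therefore $\xi_{\{i\}}(b)=b$. This gives $\xi_{\{i\}}|_{\SS_0}=\mathrm{id}$, and the proposition follows.

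I do not anticipate a real obstacle here; the only points requiring care are the identification $s_i=w_{\{i\}}$ and the remark that the standing hypotheses of this subsection --- a based $W$-representation $(U,\BB)$ arising from categorification --- are precisely what Theorem \ref{thm:gencomb} needs. Alternatively one can bypass Theorem \ref{thm:gencomb}: Corollary \ref{cor:HLLY} with $J=\{i\}$ shows the Rickard complex $\Theta_{s_i}=\Theta_i$ is a perverse equivalence whose associated bijection is $\xi_{\{i\}}$, and Lemma \ref{lem:lot-general} then yields the conclusion, the crystal computation identifying $\xi_{\{i\}}$ with the identity on $\SS_0$ being the same in both routes.
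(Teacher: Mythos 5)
Your proposal is correct and follows essentially the same route as the paper: identify $s_i$ with the longest element $w_{\{i\}}$ of the rank-one parabolic, invoke Theorem \ref{thm:gencomb} for the length-one chain, and observe that $\xi_{\{i\}}$ fixes every weight-zero element because each such element is the unique, central element of its $\mathrm{A}_1$-string in the normal crystal $\Res_{\{i\}}(\SS)$. The only difference is cosmetic --- you spell out the string-reversal computation that the paper handles in the paragraph preceding its proof --- so no further comment is needed.
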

\begin{proof}
    Let $J\subset I$ be the subdiagram $J=\{i\}$ which has Dynkin type $\mathrm{A}_1$. Then $s_i$ is the longest element of $W_J$. Suppose $\SS$ is the underlying crystal for $(U,\BB)$ such that $\SS_0=\BB$. Then $\res_J\SS$ is a normal crystal of type $\mathrm{A}_1$, hence $\xi_J$ acts by the identity on $\BB$. Then apply Theorem \ref{thm:gencomb}.
\end{proof}

For $(S^\lambda,\KL_\lambda)$ when $\lambda\vdash n$ and $J=\{i\}\subseteq\mathrm{A}_{n-1}$, we can explicitly determine the order $\leq_J$. For a tableau $T\in\SYT(\lambda)$, write $i\in\Des(T)$ if and only if $i$ is a descent of $T$, i.e. the box containing $i+1$ occurs in a strictly lower row of $T$ than the box containing $i$. By considering $\rect(T_{[i,i+1]})$, one can observe that:
\[\tau_J(T)=\begin{cases}
    (1,1) & i\in\Des(T), \\
    (2,0) & i\not\in\Des(T).
\end{cases}\]
Applying Proposition \ref{prop:generator-action}, we recover a well-known fact about the action of each generator $s_i$ on $\KL_\lambda$ \cite[Equation 6.20]{BB}. For $R,T\in\SYT(\lambda)$, $C_T$ always appears in $s_i\cdot C_T$ with coefficient $\pm 1$ (depending on whether $i\in D(T)$), and if any other element $C_R$ appears with nonzero coefficient, then $i\in\Des(R)\setminus\Des(T)$.
\end{Example}

We conclude this section with an example showing that non-separable permutations may not act by a bijection up to lower-order terms on $\KL_\lambda$.

\begin{Example}\label{ex:nonseparable}
Set $\lambda=(3,1)$ and fix the ordered basis:
\[\KL_\lambda=\left(\,\,\young(123,4)\,,\,\,\young(124,3)\,,\,\,\young(134,2)\,\,\right)\]
Then the permutation $w=2413\in S_4\setminus S_4^\mathrm{sep}$ acts on $\KL_\lambda$ by
\[[w]_{\KL_\lambda}=\begin{bmatrix}0 & -1 & 1 \\ 1 & -1 & 1 \\ 1 & -1 & 0 \end{bmatrix}.\]
Since there are only two zero entries in this matrix, $w$ cannot act by a bijection up to lower-order terms for any ordering of $\KL_\lambda$ (Corollary \ref{cor:lot-permutation}).
\end{Example}

\begin{Conjecture}\label{conj:nonseparable}Choose $\lambda\vdash n$ with $\dim S^\lambda\geq 3$. Then $w$ acts on $(S^\lambda,\KL_\lambda)$ by a bijection up to l.o.t. if and only if $w\in W^{\mathrm{sep}}$.
\end{Conjecture}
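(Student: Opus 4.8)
The forward implication ``$w\in W^{\mathrm{sep}}\Rightarrow$ l.o.t.'' is precisely Theorem~\ref{thm:1}, so only the converse requires proof: if $w\notin W^{\mathrm{sep}}$, then for \emph{every} $\lambda\vdash n$ with $\dim S^\lambda\geq 3$ the permutation $w$ fails to act on $(S^\lambda,\KL_\lambda)$ by a bijection up to l.o.t.\ with respect to any preorder. The starting point is the necessary condition underlying Example~\ref{ex:nonseparable}: by Lemma~\ref{lem:QR} and Corollary~\ref{cor:lot-permutation}, if $w$ does act by a bijection up to l.o.t.\ with respect to some preorder, then after reordering $\KL_\lambda$ by a linear refinement of that preorder the matrix $[w]_{\KL_\lambda}$ is a signed permutation matrix times a unitriangular matrix; in particular $[w]_{\KL_\lambda}$ has at least $\binom{\dim S^\lambda}{2}$ zero entries, and its ``pivot signs'' are constant along the blocks of the preorder. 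The plan is to exhibit, for non-separable $w$ and each relevant $\lambda$, a violation of one of these two obstructions.

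Since $S_n^{\mathrm{sep}}$ is the set of permutations avoiding the patterns $2413$ and $3142$, a non-separable $w$ contains one of them, and I would argue in two stages. \textbf{Base case.} For $w\in\{2413,3142\}\subset S_4$ one checks directly --- exactly as in Example~\ref{ex:nonseparable} --- that for each $\lambda\vdash 4$ with $\dim S^\lambda\geq 3$, namely $\lambda=(3,1)$ and $\lambda=(2,1,1)$, the matrix $[w]_{\KL_\lambda}$ has only two zero entries, so by Corollary~\ref{cor:lot-permutation} cannot act by a bijection up to l.o.t.; the case $\lambda=(2,1,1)$ should reduce to $\lambda=(3,1)$ by the standard compatibility of the $\KL$ basis with the sign twist $S^\lambda\otimes\mathrm{sgn}\cong S^{\lambda'}$. \textbf{Propagation.} One then needs a lemma of the form: if $u\in S_m$ occurs as a pattern of $w\in S_n$ and $u$ fails the l.o.t.\ property on every $(S^\mu,\KL_\mu)$ with $\dim S^\mu\geq 3$, then so does $w$ on every $(S^\lambda,\KL_\lambda)$ with $\dim S^\lambda\geq 3$. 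Combined with the base case, this gives the conjecture.

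The propagation lemma is the crux and the step I expect to be genuinely difficult, since for a general (non-separable) $w$ the support of $[w]_{\KL_\lambda}$ admits no transparent description --- there is no analogue of the crystal preorder $\leq_\calZ$ of Section~\ref{sec:TypeAcrystals}. Two routes seem plausible. The representation-theoretic route is to realise the pattern via parabolic induction of Kazhdan--Lusztig cell modules: restrict $S^\lambda$ to a suitable parabolic subgroup, use the known compatibility of KL cells with parabolic induction to isolate a cell-module subquotient on which the ``active'' part of $w$ acts like the $2413$ or $3142$ example, and conclude that a block-bad matrix forces the whole matrix to be bad; the obstacle is that pattern-containment is not a parabolic (interval) condition, so one must first straighten out the active positions of $w$, perhaps via the Gaetz--Gao local characterisation of separability. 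The crystal-combinatorial route is to bound below the number and location of nonzero entries of $[w]_{\KL_\lambda}$ in terms of the $2413$/$3142$-occurrences in $w$ and the ``depth'' of $\lambda$ in dominance order, showing this always breaks the $\binom{\dim S^\lambda}{2}$ bound or the pivot-sign condition; the obstacle here is the lack of any workable formula for these structure constants, so some genuinely new input --- for instance a monotonicity statement for the support of $[w]_{\KL_\lambda}$ under covers in the pattern order on permutations --- would be needed. As a first test of either approach I would treat $\lambda=(n-1,1)$, where $\KL_\lambda$ and the generator action are completely explicit.
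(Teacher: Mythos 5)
The statement you are addressing is Conjecture~\ref{conj:nonseparable}: the paper offers no proof of it, only the two ingredients you correctly identify, namely the forward implication (Theorem~\ref{thm:1}, proved via Theorem~\ref{prop:specht-action-2}) and the single computation of Example~\ref{ex:nonseparable} showing that $2413$ fails on $\lambda=(3,1)$ via the zero-count criterion of Corollary~\ref{cor:lot-permutation}. So there is no paper proof to compare against, and your text should be judged as a proposed proof of an open statement.

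As such, it has a genuine gap, which you yourself flag: the ``propagation lemma'' --- that containment of a $2413$ or $3142$ pattern in $w$ forces failure of the l.o.t.\ property on \emph{every} $(S^\lambda,\KL_\lambda)$ with $\dim S^\lambda\geq 3$ --- is not proved, and it is essentially the entire content of the open direction of the conjecture, not a reduction of it. Neither of your two routes currently yields it: pattern containment is not a parabolic condition, so restricting to a parabolic $S_4$ and invoking compatibility of cells with induction does not let you see the matrix of $w$ itself (only of elements of that parabolic), and no lower bound on the support of $[w]_{\KL_\lambda}$ in terms of pattern occurrences is known --- indeed the obstruction need not even be the zero count, since for large $\lambda$ the matrix of a non-separable $w$ may well have many more than $\binom{\dim S^\lambda}{2}$ zeros and fail only for positional reasons, which your $\binom{\dim S^\lambda}{2}$/pivot-sign dichotomy does not capture (Corollary~\ref{cor:lot-permutation} is an ``iff'' about row/column permutations to unitriangular form, a much finer condition than a zero count once $\dim S^\lambda>3$). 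The base-case verification for $\lambda=(3,1)$ and $(2,1,1)$ in $S_4$ is fine (the sign twist $S^\lambda\otimes\mathrm{sgn}\cong S^{\lambda'}$ rescales columns by $\pm1$ and permutes the basis, so the zero count is preserved), but it only establishes the conjecture for $n=4$; the inductive step from $S_4$ to general $S_n$ and general $\lambda$ is exactly what is missing. In short: your plan is a reasonable research outline and correctly isolates where the difficulty lies, but it does not constitute a proof, and the paper's authors leave the statement as a conjecture precisely because no argument of the kind you sketch is currently available.
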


\section{Canonical bases in tensor product representations}\label{sect:tensor-product}

\subsection{} Let $I$ be a finite type simply-laced Dynkin diagram, and let $W=W_I$ be the associated Weyl group.  
In this section we apply Theorem \ref{thm:gencomb} to study the action of separable elements of $W$ on the dual canonical basis elements (of weight zero) in tensor product representations of $\fg=\fg_I$.  

Recall that $X^+$ denotes the set of integral dominant weights of $\fg$, and for $\lambda \in X^+$, $L(\lambda)$ denotes the irreducible representation of $\fg$ of  highest weight $\lambda$.
Given a sequence $\ul{\la}=(\la_1,\hdots,\la_n)$ of dominant weights, we define $L(\ul{\la}):=L(\la_1)\otimes\cdots\otimes L(\la_n)$.  

These tensor product representations possess (dual) canonical bases constructed in \cite{LusTensProd, Lusbook}.  These bases have remarkable properties such as positivity and compatibility with isotypic filtrations, and important connections to the BGG category $\cO$ and quantum topology.  

Both the canonical and dual canonical basis are indexed by the crystal of the representation.  More precisely, the underlying set of $\SS(\la)$ indexes two bases of $L(\la)$: the dual canonical basis $\{x_b \mid b \in \SS(\la)\}$ and the canonical basis $\{x'_b \mid b \in \SS(\la)\}$.  This follows from \cite{GrojLus}, and  also follows from the categorical point of view. 

Indeed, as mentioned in Section \ref{sec:dualbasis}, elements of the dual canonical basis correspond to classes of simple modules in a categorication of $L(\la)$, which can be constructed via the category of finite-dimensional modules over cyclotomic KLR algebras.  If one instead considers the category of finitely-generated projective modules over the same algebras, this produces another categorification of $L(\la)$ but now the classes of indecomposable projective modules correspond to the canonical basis.  This theory is well-explained in \cite{Webcanon}.   

Consequently, the dual canonical and canonical bases of $L(\ul{\la})$ are indexed by the  crystal $\SS(\ul{\la}):=\SS(\la_1)\otimes\cdots\otimes \SS(\la_n)$, and we  denote these $\{x_b \mid b \in \SS(\ul{\la})\}$ and $\{x'_b \mid b \in \SS(\ul{\la})\}$ as above.  
Note that these are not simply the tensor product of bases, e.g. in $L(\la)\otimes L(\la')$, it is not necessarily true that $x_{b\otimes b'} = x_b \otimes x_{b'}$ for $b \in \SS(\la)$ and $b'\in \SS(\la')$.  However, we do have that $x_{b\otimes b'} = x_b \otimes x_{b'} + \text{ lower-order terms}$, where the order on tensor products of bases is defined via the partial order on weights \cite[Section 27.3]{Lusbook}.

\subsection{} 
Consider the zero weight space $U(\ul{\la}):=L(\ul{\la})_0$, which as usual we view  as a $W$-module.
We let $\BB(\ul{\la}) \subset U(\ul{\la})$ denote the set of dual canonical basis elements of $L(\ul{\la})_0$.  

\begin{Theorem}[Webster's Theorem]
The pair $(U(\ul{\la}),\BB(\ul{\la}))$ arises via categorification.  
\end{Theorem}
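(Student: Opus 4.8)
The plan is to invoke Webster's construction of tensor product categorifications. First I would recall that for a root lattice element $\nu$ Webster constructs a (graded) algebra $T^{\ul{\la}}(\nu)$, the \emph{tensor product algebra} attached to $\ul{\la}$, and that the collection of categories
\[
\cC_\mu := \bigoplus_{\nu} T^{\ul{\la}}(\nu)\mmod, \qquad \mu = \la_1 + \cdots + \la_n - \nu,
\]
carries the structure of a categorical representation of $U(\fg)$ in the sense of Section \ref{sec:catrepth}: the functors $\sE_i\sone_\mu$ and $\sF_i\sone_\mu$ are induction and restriction along the natural inclusions $T^{\ul{\la}}(\nu)\hookrightarrow T^{\ul{\la}}(\nu+\alpha_i)$, and the natural transformations $x,t,\eta,\varepsilon$ come from the KLR generators together with the (co)units of these adjunctions. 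Webster works with the quantum group in the graded setting, so one passes to the ungraded sense used here by forgetting the grading (equivalently specialising $q\mapsto 1$).

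Second, I would cite the two main results of Webster (see \cite{Webcanon}) that make this work: the Grothendieck group $[\cC]_\CC=\bigoplus_\mu[\cC_\mu]_\CC$, with the $U(\fg)$-action of Section \ref{sec:catrepth}, is isomorphic to $L(\ul{\la})=L(\la_1)\otimes\cdots\otimes L(\la_n)$; and under this isomorphism the classes of the simple objects of $\cC$ correspond exactly to Lusztig's dual canonical basis of $L(\ul{\la})$. Restricting to $\mu=0$ yields a linear isomorphism $[\cC_0]_\CC\cong L(\ul{\la})_0=U(\ul{\la})$ under which the classes of simple objects of $\cC_0$ are identified with $\BB(\ul{\la})$.

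Third, I would observe that this isomorphism is automatically $W$-equivariant, so that $(U(\ul{\la}),\BB(\ul{\la}))$ arises via categorification in the sense of \eqref{eq:Wmods}. Indeed, the $W$-action on the zero weight space of any $U(\fg)$-module is determined by the $U(\fg)$-structure through the formula \eqref{eq:actofs_i}, and $[\cC_0]_\CC$ carries precisely this $W$-action (categorified by the Rickard complexes $\Theta_i$, as recalled in Section \ref{sec:catrepth}); since $[\cC]_\CC\cong L(\ul{\la})$ is an isomorphism of $U(\fg)$-modules, its restriction to weight zero intertwines the two $W$-actions.

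The real content is entirely contained in Webster's two theorems, so beyond citing them the only thing to verify is compatibility of conventions, and this is where whatever mild care is needed lies: that forgetting the grading on Webster's categorification genuinely produces a categorical representation in the precise (ungraded) sense of Section \ref{sec:catrepth}, so that the Rickard complexes and \eqref{eq:actofs_i} apply verbatim; that the $q=1$ specialisation of the dual canonical basis agrees with the dual canonical basis used throughout Section \ref{sect:tensor-product}; and that the tensor product crystal $\SS(\ul{\la})=\SS(\la_1)\otimes\cdots\otimes\SS(\la_n)$ indexing it coincides with the crystal extracted categorically from $\cC$ via the socles of $\sE_i$ and $\sF_i$. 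None of these is a genuine obstacle, but they are what make the statement a theorem rather than a tautology.
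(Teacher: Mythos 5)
Your proposal is correct and follows essentially the same route as the paper: invoke Webster's KLRW (tensor product) algebra categorification of $L(\ul{\la})$ and his identification of the classes of simple objects with the dual canonical basis, then restrict to weight zero, with the $W$-equivariance coming for free from the $U(\fg)$-structure as in Section \ref{sect:crystal1}. The only cosmetic difference is that the paper reaches the dual canonical basis statement via \cite[Corollary 8.10]{Webcanon} (projectives give the canonical basis) combined with \cite[Corollary 2.4]{Webcanon}, whereas you cite the simples/dual-canonical identification directly.
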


\begin{proof}
Webster constructs KLRW algberas $T^{\ul{\la}}_\nu$, which are generalisations of KLR algebras, in order to categorify the representations $L(\ul{\la})$.  Specifically, the category
$$
\cL(\ul{\lambda})=\bigoplus_{\nu} T^{\ul{\lambda}}_\nu\mmod,
$$
carries an action of $\fg$ (and the corresponding quantum group), and this representation categorifies $L(\ul{\la})$.  
By \cite[Corollary 8.10]{Webcanon}, if we instead consider the additive categorification given by taking the categories of projective modules above, then the classes of indecomposable projectives correspond to the canonical basis of $L(\ul{\la})$.  Then, by \cite[Corollary 2.4]{Webcanon}, we obtain that the classes of simple modules in $\cL(\ul{\lambda})$ correspond to the dual canonical basis elements of $L(\ul{\la})$.
\end{proof}

We are in the position to apply Theorem \ref{thm:gencomb}.  Consider a chain $\calZ$ of subdiagrams of $I$, and let $w=w_\calZ \in W$ be the corresponding separable element, $\leq_{\calZ}$ the partial order on $\BB(\ul{\la})$, and $\xi_{\calZ}$ the bijection of $\BB(\ul{\la})$.  Then
$w$ acts on $(U(\ul{\la}),\BB(\ul{\la}),\leq_\calZ)$ by $\xi_\calZ$ up to l.o.t.

\subsection{}
We investigate this result through some examples, and begin by considering the longest element $w=w_I$.  In the simplest case $\ul{\la}=(\la)$, i.e. when $w$ acts on the zero weight space of the irreducible representation $L(\la)$, we recover, and slightly refine, a classical result of Lusztig \cite[Corollary 5.9]{LusCanBasesArising}: for any $x \in \BB(\la)$ we have that $w_I\cdot x = \pm \xi_I(x)$, where $\xi_I:\BB(\la)\to \BB(\la)$ is  the Sch\"utzenberger involution.  Note that Lusztig's result predates the theory of crystals, and does not explicitly construct the involution.

Next, take $I=A_{n-1}$ and let $w_I$ act on tensor product representations of $\fsl_n$.  For instance, consider $\CC^n\otimes \CC^n \cong L(2\varpi_1) \oplus L(\varpi_2)$, where $\varpi_i$ denotes the $i$-th fundamental weight.    Write $\BB:=\big(\SS(\varpi_1)\otimes\SS(\varpi_1)\big)_0=\BB_1\oplus \BB_2$, where $\BB_1 \cong \SS(2\varpi_1)_0$ and $\BB_2 \cong \SS(\varpi_2)_0$, and let $\xi:\BB \to \BB$ be the Sch\"utzenberger involution.  Since $\varpi_2 <_I 2\varpi_1$, we have:
\begin{enumerate}
    \item For $b \in \BB_2$, $w\cdot x_b = \pm x_{\xi(b)}$,
    \item For $b \in \BB_1$, $w\cdot x_b = \pm x_{\xi(b)}+v_b$ where $v_b \in \Span_\ZZ(\BB_2)$.
\end{enumerate}
Note that in this small example we already exhibit  new combinatorial information which hasn't been apparent via classical methods.

More generally, consider the $d$-fold tensor product of $\CC^n$, and for simplicity suppose $n \geq d$.  By Schur--Weyl duality, $(\CC^n)^{\otimes d} \cong \bigoplus_{\la \vdash d}L(\la)\otimes S^\la$.  We write $$\BB:=\big(\SS(\varpi_1)^{\otimes d}\big)_0=\bigoplus_{\la \vdash d}\BB_\la,$$ where $\BB_\la\cong \big(\SS(\la)^{\oplus d_\la}\big)_0$ and $d_\la=\dim S^\la$.  Let $\xi:\BB \to \BB$ be the Sch\"utzenberger involution.  
Since the dominance order $\prec$ on partitions is equivalent to the weight order $<_I$, we have for every $b \in \BB_\la$:
$$
w\cdot x_b = \pm x_{\xi(b)}+v_b,\;\text{where } v_b \in \Span_\ZZ(\{\BB_\mu \mid \mu \prec \la\}).
$$

Finally, we consider an example of our results that shows  how other separable permutations act on bases.  
Set $I=A_2$ and let $L(\la)=\fg$ be the adjoint representation of $\fg=\fsl_3$, which has highest weight $\la=(2,1,0)$.  Also, let $J=A_1 \subset I$ be the  embedding of $A_1$ as the first vertex.  

We'll compare the action of the separable permutations $w=(1,3)$ and $c=(1,2,3)$ on the canonical and dual canonical basis of $(L(\la)\otimes L(\la))_0$. The crystal of $L(\la)\otimes L(\la)$ is defined on the set $\SYT(\lambda)\times \SYT(\lambda)$, and there are ten weight-zero elements:
\begin{align*}
    b_0=\text{\tiny$\young(23,3)$}\otimes \text{\tiny$\young(11,2)$},\;
    b_1=\text{$\tiny\young(22,3)$}\otimes \text{$\tiny\young(11,3)$},\;
    b_2=\text{$\tiny\young(13,2)$}\otimes \text{$\tiny\young(13,3)$},\;
    b_3=\text{$\tiny\young(12,3)$}\otimes \text{$\tiny\young(12,3)$},\; 
    b_4=\text{$\tiny\young(13,3)$}\otimes \text{$\tiny\young(12,2)$} \\
    b_5=\text{$\tiny\young(13,2)$}\otimes \text{$\tiny\young(12,3)$},\; 
    b_6=\text{$\tiny\young(12,3)$}\otimes \text{$\tiny\young(13,2)$},\;
    b_7=\text{$\tiny\young(12,2)$}\otimes \text{$\tiny\young(13,3)$},\;
    b_8=\text{$\tiny\young(11,2)$}\otimes \text{$\tiny\young(23,3)$},\;
    b_9=\text{$\tiny\young(11,3)$}\otimes \text{$\tiny\young(22,3)$}
\end{align*}
\noindent Below we list their images under $\tau_I$ and $\tau_J$ (cf.\ Section \ref{sec:sep-elts}).
\begin{center}
  \begin{tabular}{  l | c c c c c c c c c c }
             & $b_0$    & $b_1$   & $b_2$   & $b_3$   & $b_4$   & $b_5$   & $b_6$   & $b_7$   & $b_8$   & $b_9$\\\hline
    $\tau_I$ & $\la_0$  & $\la_1$ & $\la_1$ & $\la_1$ & $\la_1$ & $\la_2$ & $\la_3$ & $\la_4$ & $\la_4$ & $\la_4$\\ 
    $\tau_J$ & 0        & 0       & 0       & 2       & 2       & 2       & 2       & 0       & 2       & 4 \\
    \end{tabular}
\end{center}
where $\la_0=(0,0,0)$, $\la_1=(2,1,0)$, $\la_2=(3,3,0)$, $\la_3=(4,1,1)$ and $\la_4=(4,2,0)$.  The Hasse diagram of these weights is given by:
\[
\begin{tikzcd}[row sep = small, column sep = small]
 & \la_4 \ar[dl, no head] \ar[dr, no head] & \\
 \la_3 \ar[dr, no head] & & \la_2 \ar[dl, no head] \\
 & \la_1 \ar[d, no head] & \\
 & \la_0 & 
\end{tikzcd}
\]
Finally, we note that $\xi_I=(0,7)(1,3)(2,4)$ as a permutation of the $b_i$ and $\xi_J$ is trivial.

Let $x_i$ be the dual canonical basis element corresponding to each $b_i$. As an example, we consider the action of $w$ on $x_3$. Since $b_0$ is the only element smaller than $b_3$ under $<_I$,
\[w\cdot x_3=\pm x_{\xi(3)}+ax_{\xi(0)}=\pm x_1+ax_7\]
for some $a\in\ZZ$. To compute the action of $c$, note that $x_i<_{(I,J)}x_3$ for $i=0,1,2$ and so
\[c\cdot x_3=\pm x_{\xi(3)}+a_0x_{\xi(0)}+a_1x_{\xi(1)}+a_2x_{\xi(2)}=\pm x_1+a_0x_7+a_1x_3+a_2x_4\]
for $a_0,a_1,a_2\in\ZZ$. We can make similar computations to determine the actions on the canonical basis using Corollary \ref{cor:gencomb}.

\appendix

\section{Acting by bijections up to lower-order terms}\label{app:QR}

In this section we discuss a number of general results concerning linear operators which act by bijections up to lower-order terms, as introduced in Definition \ref{def:lot}.

For the following, let $U$ be a vector space over $\CC$ with a fixed basis $\BB$, and let $w:U\to U$ be a linear operator. Our first result is that if $w$ acts on $(U,\BB)$ by a bijection up to lower-order terms, this bijection is unique.

\begin{Lemma}\label{lem:lot-unique}
    For $i=1,2$, let $\xi_i:\BB\to\BB$ be bijections and let $\leq_i$ be preorders on $\BB$ such that $w$ acts on $(U,\BB,\leq_i)$ by $\xi_i$ up to lower-order terms. Then $\xi_1=\xi_2$.
\end{Lemma}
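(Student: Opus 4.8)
The plan is to show that the permutation $\xi_i$ is already forced by the action of $w$ on $\BB$, via a descent-chain argument in a finite preordered set. Set $\sigma := \xi_2^{-1}\circ\xi_1\colon \BB\to\BB$; proving $\xi_1=\xi_2$ is the same as proving $\sigma=\mathrm{id}$.

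First I would extract a single consequence of the two hypotheses for an arbitrary $x\in\BB$. Because $w$ acts on $(U,\BB,\leq_1)$ by $\xi_1$ up to lower-order terms, the coefficient of the basis vector $\xi_1(x)$ in $w(x)$ is $\pm1$, hence nonzero. Because $w$ acts on $(U,\BB,\leq_2)$ by $\xi_2$ up to lower-order terms, and because $\xi_2$ is a bijection (so the vectors $\xi_2(y)$, $y\in\BB$, are pairwise distinct basis vectors), the coefficient of a basis vector $\xi_2(z)$ in $w(x)$ equals $\pm1$ if $z=x$, an integer if $z<_2 x$, and $0$ otherwise. Taking $z=\sigma(x)$, so that $\xi_2(z)=\xi_1(x)$, the nonvanishing of this coefficient rules out the last case and gives $\sigma(x)\leq_2 x$ (either $\sigma(x)=x$, or $\sigma(x)<_2 x$). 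Thus $\sigma(x)\leq_2 x$ for every $x\in\BB$.

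Next I would run the descent argument. Assume $\sigma\neq\mathrm{id}$ and choose $x$ lying on a $\sigma$-cycle of length $\ell\geq2$; then no iterate $\sigma^{k}(x)$ is a fixed point, so $\sigma^{k}(x)\neq\sigma^{k-1}(x)$ for all $k$. Applying the previous paragraph to the elements $\sigma^{k-1}(x)$ upgrades the weak inequalities to strict ones: $\sigma^{k}(x)<_2\sigma^{k-1}(x)$ for $k=1,\dots,\ell$. Chaining $\sigma^{\ell-1}(x)\leq_2\sigma^{\ell-2}(x)\leq_2\cdots\leq_2\sigma(x)\leq_2 x$ by transitivity of the preorder yields $\sigma^{\ell-1}(x)\leq_2 x$, whereas the relation for $k=\ell$ reads $x=\sigma^{\ell}(x)<_2\sigma^{\ell-1}(x)$, which by definition of $<_2$ includes $\neg\bigl(\sigma^{\ell-1}(x)\leq_2 x\bigr)$ — a contradiction. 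Hence $\sigma=\mathrm{id}$, i.e. $\xi_1=\xi_2$.

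The point demanding care — rather than a genuine obstacle — is that $\leq_1$ and $\leq_2$ are only preorders, not partial orders, so one cannot simply invoke "a map weakly decreasing on a finite poset must fix the minimal elements". The argument above avoids this by working directly with the strict and weak relations and using irreflexivity of $<_2$ (which is automatic from $x<_2 y\iff x\leq_2 y\wedge\neg(y\leq_2 x)$). I would also remark that only the support of $w(x)$ is used; in particular the clause in Definition \ref{def:lot} about the sign of $\xi(x)$ depending on the equivalence class plays no role in this lemma.
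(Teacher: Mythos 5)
Your proof is correct and rests on the same key observation as the paper's, but the finishing argument is organised differently. The paper argues by minimal counterexample: it picks $x$ minimal under $\leq_1$ with $\xi_1(x)\neq\xi_2(x)$, notes that $\xi_2(x)$ occurs with coefficient $\pm1$ in $w(x)$, hence must equal $\xi_1(y)$ for some $y<_1x$, and then minimality forces $\xi_1(y)=\xi_2(y)$, giving $y=x$ and a contradiction. You instead extract from the two expansions the global statement that $\sigma=\xi_2^{-1}\circ\xi_1$ satisfies $\sigma(x)=x$ or $\sigma(x)<_2x$ for \emph{every} $x$, and then rule out nontrivial $\sigma$-cycles by transitivity and irreflexivity of the strict part of $\leq_2$; your care with preorders versus partial orders is exactly right, and you are also correct that the sign clause of Definition \ref{def:lot} is irrelevant here. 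The two routes are mirror images: the paper localises $\xi_2(x)$ inside the $\xi_1$-expansion at a single well-chosen $x$, you localise $\xi_1(x)$ inside the $\xi_2$-expansion at all $x$ and then do the combinatorics on $\sigma$. One shared caveat: the lemma as stated imposes no finiteness, yet the paper's proof needs a $\leq_1$-minimal disagreement (well-foundedness) and yours needs a finite $\sigma$-cycle (e.g.\ $\BB$ finite, as you flag); since the paper's own argument makes the same tacit assumption, this is not a gap in your proposal, and your argument can in any case be recast in the paper's style by taking a $\leq_2$-minimal non-fixed point of $\sigma$ instead of a cycle.
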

\begin{proof}
    Suppose for a contradiction that $\xi_1\neq \xi_2$, and let $x\in\BB$ be minimal under $\leq_1$ such that $\xi_1(x)\neq\xi_2(x)$. By definition, we have:
    \[w(x)=\pm\xi_1(x)+\sum_{y<_1x}a_y\xi_1(y)\]
    and we also know that $\xi_2(x)$ appears with coefficient $\pm 1$ in $w\cdot x$. Since $\xi_1(x)\neq\xi_2(x)$ by assumption, we must have $\xi_2(x)=\xi_1(y)$ for some $y<_1 x$. Since $x$ was minimal, we conclude that $\xi_1(y)=\xi_2(y)$ and hence $y=x$, a contradiction.
\end{proof}

Next, we interpret bijections up to lower-order terms through the QR decomposition of a matrix. Recall that a square matrix $A$ with real coefficients has a unique decomposition into a product of square matrices $A=QR$, where $Q$ is orthogonal and $R$ is upper-triangular with positive diagonal entries.

Suppose $(U,\BB)$ is a finite-dimensional based vector space. Given a preorder $\leq$ on $\BB$, let $\preceq$ be a linearisation of this order, which is a total order satisfying $x\prec y$ for $x,y\in\BB$ whenever $x<y$. For an operator $w:U\to U$, let $[w]_{(\BB,\preceq)}$ be the matrix of $w$ with respect to the ordered basis $(\BB,\preceq)$.

\begin{Lemma}\label{lem:QR}
    Suppose that $w$ acts on $(U,\BB,\leq)$ by $\xi$ up to lower-order terms. Then $[w]_{(\BB,\preceq)}$ has a unique $QR$ decomposition, and the orthogonal part $Q$ is a generalised permutation matrix for $\xi$.
\end{Lemma}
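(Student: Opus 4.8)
The plan is to write down the $QR$ decomposition explicitly and then appeal to its uniqueness. Write $\BB=\{x_1\prec x_2\prec\cdots\prec x_N\}$ for the linearly ordered basis, let $\epsilon_j\in\{\pm1\}$ be the sign attached to the $\leq$-equivalence class of $x_j$, and let $\sigma$ be the permutation of $\{1,\dots,N\}$ determined by $\xi(x_j)=x_{\sigma(j)}$. The hypothesis that $w$ acts on $(U,\BB,\leq)$ by $\xi$ up to lower-order terms, combined with the fact that $\preceq$ linearises $\leq$ (so that $y<x$ implies $y\prec x$), is precisely the statement that
\[
w(x_j)=\epsilon_j\, x_{\sigma(j)}+\sum_{k\,:\,x_k\prec x_j}c_{kj}\,x_{\sigma(k)}
\]
for suitable $c_{kj}\in\ZZ$. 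Reading off columns, this says that the matrix $A:=[w]_{(\BB,\preceq)}$ has its $j$-th column supported on the rows $\{\sigma(k):x_k\preceq x_j\}$, with the entry in row $\sigma(j)$ equal to $\epsilon_j$.

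First I would introduce $D$, the generalised permutation matrix of $\xi$, defined by $D e_j=\epsilon_j e_{\sigma(j)}$. Each row and column of $D$ has a single nonzero entry, equal to $\pm1$, so $D$ is orthogonal, and it is by definition a generalised permutation matrix for $\xi$. The key computation is then that $R:=D^{-1}A$ is upper triangular with positive diagonal. Since $D^{-1}e_{\sigma(i)}=\epsilon_i e_i$, one obtains $R_{ij}=\epsilon_i A_{\sigma(i),j}$; by the column description above, $A_{\sigma(i),j}\neq 0$ forces $x_i\preceq x_j$, i.e.\ $i\leq j$, so $R$ is upper triangular, and $R_{jj}=\epsilon_j A_{\sigma(j),j}=\epsilon_j^2=1>0$. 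Hence $A=DR$ is a $QR$ decomposition.

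Finally, to deduce uniqueness I would note that $R$ is unit upper triangular, hence invertible, so $A$ is invertible, and for an invertible real square matrix the decomposition into an orthogonal matrix times an upper-triangular matrix with positive diagonal entries is unique (this is the standard uniqueness statement recalled before the lemma). Therefore the orthogonal part of the $QR$ decomposition of $A$ equals $D$, which is a generalised permutation matrix for $\xi$, as claimed.

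There is no real obstacle here beyond the bookkeeping. The one thing to be careful about is the direction of $\sigma$ and the interplay between the preorder $\leq$ and its linearisation $\preceq$: it is exactly the implication ``$y<x$ in $\leq$ $\Rightarrow$ $y\prec x$ in $\preceq$'' that guarantees the off-diagonal terms of $R$ land strictly above the diagonal rather than merely weakly above it. One should also explicitly record that $A$ is invertible (which comes for free from $R_{jj}=1$) before invoking uniqueness of $QR$.
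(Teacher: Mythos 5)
Your proof is correct and follows essentially the same route as the paper: both factor $[w]_{(\BB,\preceq)}$ as (signed permutation matrix of $\xi$) times (unit upper-triangular matrix) — the paper phrases this via operators $w=\xi'w'$, you via $R=D^{-1}A$ — and then invoke uniqueness of the $QR$ decomposition. Your explicit remark that $R$ is unit upper-triangular, hence $A$ is invertible before appealing to uniqueness, is a welcome bit of extra care that the paper's own proof leaves implicit.
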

\begin{proof}
    Since $[w]_{(\BB,\preceq)}$ is a square matrix with integer entries, it has a unique QR decomposition. For every $x\in\BB$ we can find $a_y\in\ZZ$ such that
    \[w(x)=\pm\xi(x)+\sum_{y<x}a_y\xi(y).\]
    Define maps $\xi':U\to U$ and $w':U\to U$ by the linear extensions of $\xi'(x):=\pm\xi(x)$ and
    \[w'(x):=x\pm\sum_{y<x}a_y y\]
    for every $x\in \BB$, where the sign in both cases matches the sign of $\xi(x)$ in $w(x)$. Then $w=\xi' w'$, $[\xi']_{(\BB,\preceq)}$ is a generalised permutation matrix for $\xi$ with entries $\pm 1$, and $[w']_{(\BB,\preceq)}$ is an upper-triangular matrix whose diagonal entries are all $1$, since the order $<$ implies $\prec$ by construction. Permutation matrices with nonzero entries $\pm 1$ are orthogonal, so
    \[[w]_{(\BB,\preceq)}=[\xi']_{(\BB,\preceq)}[w']_{(\BB,\preceq)}\]
    is the QR decomposition as required.
\end{proof}

We note that a decomposition of this form is extremely rare in general, since permutation matrices with entries $\pm 1$ form a finite subset of the space of orthogonal matrices.

Given some based vector space $(U,\BB)$ and an operator $w:U\to U$, we would like to determine whether $w$ acts by \emph{some} bijection up to lower-order terms. Suppose $A:=[w]_{(\BB,\preceq)}$, where $\preceq$ is an arbitrary ordering on $\BB$.

\begin{Corollary}\label{cor:lot-permutation}
    The operator $w:U\to U$ acts by some bijection on $(U,\BB)$ up to lower-order terms if and only if there exist permutation matrices $P_1,P_2$ such that $P_1AP_2$ is an upper-triangular integer matrix with diagonal entries $\pm 1$.
\end{Corollary}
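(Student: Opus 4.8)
The plan is to prove both implications by reducing everything to the matrix bookkeeping already packaged in Lemma \ref{lem:QR}. First I would note that the stated condition is independent of the arbitrary ordering $\preceq$ used to form $A$: changing the order of $\BB$ replaces $A$ by $Q^{-1}AQ$ for a permutation matrix $Q$ (reordering a basis conjugates the matrix of $w$), and $P_1(Q^{-1}AQ)P_2=(P_1Q^{-1})A(QP_2)$, so the existence of permutation matrices bringing the matrix to upper-triangular form with $\pm1$ on the diagonal is unaffected by this choice. It therefore suffices to fix one ordering $\BB=(x_1,\dots,x_m)$, in terms of which $w(x_j)=\sum_i A_{ij}x_i$.

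For the forward implication, suppose $w$ acts on $(U,\BB,\le)$ by $\xi$ up to lower-order terms. I would pick a linearisation $\preceq'$ of $\le$, so that $[w]_{(\BB,\preceq')}=P^{-1}AP$ for some permutation matrix $P$. By Lemma \ref{lem:QR} and its proof this matrix factors as $[\xi']_{(\BB,\preceq')}[w']_{(\BB,\preceq')}$, where the first factor is a generalised permutation matrix with entries in $\{0,\pm1\}$ and the second is an upper-triangular integer matrix all of whose diagonal entries equal $1$. Writing $[\xi']_{(\BB,\preceq')}=\Pi\Sigma$ with $\Pi$ a permutation matrix and $\Sigma$ a diagonal $\pm1$ matrix, I would multiply through by $\Pi^{-1}$ to obtain $\Pi^{-1}P^{-1}AP=\Sigma\,[w']_{(\BB,\preceq')}$, which is upper-triangular, integral, and has diagonal entries $\pm1$. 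Taking $P_1=\Pi^{-1}P^{-1}$ and $P_2=P$ then finishes this direction.

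For the converse, suppose $B:=P_1AP_2$ is upper-triangular and integral with $\pm1$ on the diagonal. The permutations underlying $P_1,P_2$ supply bijections $\alpha,\beta$ of $\{1,\dots,m\}$ with $B_{ij}=A_{\alpha(i),\beta(j)}$ for all $i,j$. I would then define a bijection $\xi:\BB\to\BB$ by $\xi(x_{\beta(i)})=x_{\alpha(i)}$ and a total order on $\BB$ by $x_{\beta(i)}<x_{\beta(j)}\iff i<j$. Reindexing $w(x_{\beta(j)})=\sum_k A_{k,\beta(j)}x_k$ along $k=\alpha(i)$ rewrites it as $\sum_i B_{ij}\,\xi(x_{\beta(i)})=\pm\,\xi(x_{\beta(j)})+\sum_{i<j}B_{ij}\,\xi(x_{\beta(i)})$, using upper-triangularity of $B$ and the $\pm1$ diagonal; since $i<j$ is the same as $x_{\beta(i)}<x_{\beta(j)}$ and the $B_{ij}$ are integers, this is exactly the relation in Definition \ref{def:lot}, with the sign condition there vacuous because $\le$ is a total order.

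The main obstacle here is purely bookkeeping: one must keep straight that reordering $\BB$ conjugates $[w]$ by a permutation matrix, and that the mismatch between the left and right factors $P_1\ne P_2$ in the statement is precisely what encodes the bijection $\xi$ (via its permutation part $\Pi$ in the forward direction, and via the pair $(\alpha,\beta)$ in the converse). No genuinely new idea beyond Lemma \ref{lem:QR} is needed.
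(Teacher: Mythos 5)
Your proof is correct and takes essentially the same route as the paper: both arguments reduce the corollary to Lemma \ref{lem:QR}, using that reordering $\BB$ conjugates $A$ by a permutation matrix and that the mismatch between the two permutation matrices $P_1\neq P_2$ is exactly what encodes the bijection. The only difference is that you write out the converse direction explicitly (building $\xi$ and a total order from the two permutations), a step the paper compresses into a terse ``if and only if'' appeal to the lemma, so your version is if anything slightly more complete.
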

\begin{proof}
    By the Lemma, $w$ acts by a bijection on $(U,\BB)$ by a bijection up to l.o.t. if and only if there is some ordering $(\BB,\preceq')$ of the basis such that $[w]_{(\BB,\preceq')}=PR$, where $P$ is a permutation matrix and $R$ is upper-triangular with integer entries and diagonal entries $\pm 1$. Since reordering the basis corresponds to conjugating by a permutation matrix, this is equivalent to the existence of a permutation matrix $P_0$ such that
    \[P_0AP_0^{-1}=PR.\]
    Setting $P_1:=P^{-1}P_0$, $P_2:=P_0^{-1}$ and multiplying by $P^{-1}$ on both sides completes the proof.
\end{proof}

This is the problem of `upper-triangularisation': asking whether the rows and columns of a matrix $A$ can be permuted to achieve upper-triangularity (although we also impose some additional constraints on the entries). When $A$ is a binary matrix, the computational complexity of this problem has been shown to be NP-complete \cite{triangularisation}.

\section{The Kazhdan--Lusztig basis for arbitrary Coxeter groups}\label{app:KL}

In this section, we prove that separable elements of a Coxeter group act on the Kazhdan-Lusztig basis of the left regular representation by bijections up to lower-order terms. Our argument is combinatorial, and builds on work of Mathas \cite{M}, Geck \cite{G} and Roichman \cite{R} in relative Kazhdan-Lusztig theory. We note that a version of these results for representations of Hecke algebras (with equal parameters) can be similarly developed using recent work by Bonnaf\'e \cite{Bonnafe}, although this involves a number of technicalities which we choose to avoid for the sake of exposition.

For the general theory of Coxeter groups and their Kazhdan--Lusztig representations, see \cite{BB}. We begin by making the necessary definitions to state our main result.

Let $W:=(W,I)$ be a Coxeter system. That is, $I$ is a finite set, which is the vertex set of a simple, undirected graph with edges labelled $\{3,4,\dots,\infty\}$. Given such an $I$, the associated Coxeter group $W$ is generated by $s_i$ for $i\in I$ satisfying $s_i^2=1$ and $(s_is_j)^{m_{ij}}=1$, where $m_{ij}$ is the weight of the edge between $i$ and $j$. If no such edge exists, we set $m_{ij}:=2$ so that $s_i$ and $s_j$ commute. If $m_{ij}=\infty$, no relation is imposed. Note that every Weyl group can be realised as a finite Coxeter group. 

An expression for $x\in W$ is a word $(i_1,\dots,i_r)$ with $i_j\in I$ such that $x=s_{i_1}\cdots s_{i_r}$. We call this expression reduced if it is of minimal length among all expressions for $x$. The length of such an expression is called the length of $x$.

For $J\subseteq I$, we let $(W_J,J)$ be the induced  Coxeter system. Then $W_J\leq W$ is the subgroup generated by $\{s_i\mid i\in J\}$. When $W_{J}$ is a finite group, it contains a unique longest element which we denote $w_{J}$. It turns out that this element is always an involution. We use this to extend the definition of separable elements of Weyl groups from Section \ref{sec:other-weyl-groups} to all Coxeter groups by
\[W^\mathrm{sep}:=\{w_{I_1}\cdots w_{I_r}\in W\mid I\supseteq I_1\supseteq\cdots\supseteq I_r,\, W_{I_1}\text{ finite}\}.\]
In the following section, we construct a canonical basis for the regular representation $\CC[W]$, which we call the Kazhdan--Lusztig (KL) basis. Our main result will be as follows:

\begin{Theorem}\label{thm:lot-KL}
    Let $W$ be a Coxeter group and $\KL$ the KL basis of $\CC[W]$. Every $w\in W^\mathrm{sep}$ acts on $(\CC[W],\KL)$ by a bijection up to lower-order terms.
\end{Theorem}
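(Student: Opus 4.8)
The plan is to run a combinatorial version of the argument in Section~\ref{sec:pervysepy}: left $\CC[W]$-submodules play the role of Serre subcategories, and operators acting by a bijection up to lower-order terms play the role of perverse equivalences. Three ingredients are needed. First, the \emph{left-cell filtration}: $\CC[W]$ with the left regular action is filtered by left submodules spanned by subsets of $\KL$, with subquotients the left cell modules. Second, \emph{Mathas's theorem}: if $W_J$ is finite with longest element $w_J$, then left multiplication by $w_J$ acts on each left cell module of $W_J$ by a bijection of its Kazhdan--Lusztig basis up to lower-order terms --- in the cases we use, by an honest signed permutation whose sign is constant on the cell. Third, \emph{relative Kazhdan--Lusztig theory} (Geck, Roichman): for $J\subseteq I$ with $W_J$ finite, the restriction to $W_J$ of $\CC[W]$ (or of any left cell module) admits a filtration by left $W_J$-submodules whose subquotients, together with their induced bases, are based-isomorphic to left cell modules of $W_J$.

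\textbf{The induction.} Fix a chain $I\supseteq I_1\supseteq\cdots\supseteq I_r$ with $W_{I_1}$ finite and $w=w_{I_1}\cdots w_{I_r}\in W_{I_1}$. Since left multiplication preserves left submodules, and since an operator acting by a bijection up to lower-order terms on each subquotient of a filtration by submodules acts by a bijection up to lower-order terms on the whole module --- with the preorder obtained by concatenating the subquotient preorders above the filtration order, exactly as in the proof of Lemma~\ref{lem:lot-general} --- the third ingredient reduces the theorem to the following claim: for every finite Coxeter group $G$ with diagram $J_1$, every chain $J_1\supseteq J_2\supseteq\cdots\supseteq J_r$, and $u=w_{J_1}\cdots w_{J_r}$, the element $u$ acts by a bijection up to lower-order terms on every left cell module $V$ of $G$. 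We prove this by induction on $r$. For $r=1$, $u=w_{J_1}$ is the longest element of $G$ and the claim is Mathas's theorem. For $r>1$ write $u=w_{J_1}u'$ with $u'=w_{J_2}\cdots w_{J_r}\in W_{J_2}^{\mathrm{sep}}$; restricting $V$ to $W_{J_2}$ and applying the third ingredient, $V$ is filtered by $W_{J_2}$-submodules based-isomorphic to left cell modules of $W_{J_2}$, on each of which $u'$ acts by a bijection up to lower-order terms by the inductive hypothesis, so $u'$ acts on $V$ by a bijection $\tau$ up to lower-order terms for a preorder $\leq_{u'}$. Since $u=w_{J_1}\circ u'$ as operators on $V$, and $w_{J_1}$ acts on $V$ by a signed permutation $\sigma$ with constant sign, we get $u(x)=\pm(\sigma\tau)(x)+\sum_{y<_{u'}x}b_y(\sigma\tau)(y)$; hence $u$ acts on $V$ by the bijection $\sigma\tau$ up to lower-order terms, with preorder the transport of $\leq_{u'}$ along $\sigma\tau$ and sign the product of the constant sign of $\sigma$ with the class-dependent sign of $\tau$, which again depends only on the equivalence class. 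Applying the same restriction--and--Geck reduction to $\CC[W]$ itself (restricting to $W_{I_1}$ and invoking the claim on the resulting cell-module subquotients) proves the theorem, the witnessing bijection being the analogue of $\xi_\calZ$ assembled from the relative-cell involutions along $\calZ$.

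\textbf{Main obstacle.} The substantive difficulty is not the preorder bookkeeping --- which is lighter here than in Theorem~\ref{thm:mainthm}, since one of the two composed operators, $w_{I_1}$, is an honest signed permutation on cell modules, so the failure of ``bijection up to l.o.t.'' to be closed under composition never bites --- but rather pinning down the two cited inputs in precisely the form used. One must check that Geck's relative Kazhdan--Lusztig cells yield a filtration by genuine left \emph{submodules} (not merely subquotients), and that the relative Kazhdan--Lusztig basis agrees with the ambient $\KL$ on each subquotient, so that the subquotients are \emph{based}-isomorphic to left cell modules of the parabolic; and one must confirm the exact form of Mathas's theorem for the \emph{left} cell module --- that $w_J$ acts on it by a signed permutation of the KL basis with sign constant on the cell --- possibly by passing through the right-cell/$\Gamma w_J$ picture together with the compatibility of cell modules under twisting by the sign character. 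Should Mathas's statement only hold up to lower-order terms for some $W$, the induction still goes through by mimicking Proposition~\ref{prop:prep} and the proof of Theorem~\ref{thm:mainthm} in the combinatorial setting, replacing derived-category truncations by the elementary fact that a two-step filtration with vanishing connecting data splits.
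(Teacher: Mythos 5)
Your proposal is correct and rests on exactly the same two inputs as the paper --- Mathas's theorem (Proposition \ref{prop:long-element-KL}) and Roichman's relative Kazhdan--Lusztig theory (Lemma \ref{lem:KL-restriction}) --- but the induction is arranged differently. The paper first transfers Mathas's result from cell modules of $W_J$ to all of $\CC[W]$ (Proposition \ref{prop:long-element-KL2}) and then inducts on the length of the chain directly in $\CC[W]$ with the preorder $\leq_{\calZ}^\rL$; since there $w_{I_1}$ itself acts only up to lower-order terms, the inductive step needs the nesting of the relative preorders and a two-case analysis of where each $C_{\psi_\calZ(z)}$ can arise. You instead push the induction inside left cell modules of the finite parabolic $W_{I_1}$, where $w_{I_1}$ is an honest signed permutation with constant sign, so the composition step is immediate, and you only lift to $\CC[W]$ at the end through the $I_1$-relative-cell filtration; in effect you prove the paper's Corollary \ref{cor:kl-action} first and deduce Theorem \ref{thm:lot-KL} from it, whereas the paper goes the other way. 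The auxiliary facts you flag as needing verification do hold and are exactly what the paper records: the spans $\cC^{\leq}$ are genuine $W_{J}$-submodules by the definition of $\leq_{J}^\rL$, each left cell of $W_{I_1}$ is a union of $I_2$-relative cells whose subquotient bases are the ambient KL basis elements, and these subquotients are based-isomorphic to cell modules of the parabolic by Lemma \ref{lem:KL-restriction}; Mathas's theorem is available in the stated form for all finite Coxeter groups thanks to Elias--Williamson positivity (see the Remark after Proposition \ref{prop:long-element-KL}). Two small wrinkles: your displayed formula $u(x)=\pm(\sigma\tau)(x)+\sum_{y<_{u'}x}b_y(\sigma\tau)(y)$ is already in the form of Definition \ref{def:lot} with preorder $\leq_{u'}$, so no ``transport along $\sigma\tau$'' is needed (and transporting would actually be the wrong bookkeeping); and when $W$ is infinite the $I_1$-relative cells need not be indexed by a finite or well-ordered chain, so the ``filtration'' packaging of Lemma \ref{lem:lot-general} should be read pointwise --- for each $x$ the expansion of $w\cdot C_x$ is supported on $\{C_z : z\leq_{I_1}^\rL x\}$ --- which is how the paper phrases it and costs nothing. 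What your arrangement buys is a cleaner composition step (no analogue of Proposition \ref{prop:prep} or the case analysis is ever needed); what the paper's buys is an explicit description of the bijection $\psi_\calZ$ and preorder $\leq_\calZ^\rL$ on all of $\CC[W]$ at once, which it then reuses for the cell-module corollary and the comparison with $\ev_\calZ$ in type A.
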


\subsection{Setup}\label{sect:KL-general} In this section we construct the KL basis $\CC[W]$ following \cite{BB}. As before, let $W:=(W,I)$ be an arbitrary Coxeter system.

Denote by $\cH:=\cH(W;q)$ the Hecke algebra, which is the $\Z[q^{\pm 1/2}]$-algebra generated by $\{T_i\mid i\in I\}$ satisfying the relations $(T_iT_j)^{m_{ij}}=1$, and the quadratic relation $T_i^2=(q-1)T_i+q$. If $w=s_{i_1}\cdots s_{i_r}$ is a reduced expression, define $T_w:=T_{i_1}\cdots T_{i_r}$ (this is independent of the chosen reduced expression). Observe that $\cH$ is spanned over $\Z[q^{\pm 1/2}]$ by $\{T_w\mid w\in W\}$, and each $T_w$ is invertible.

There is a ring involution on $\H$ given by the $\Z$-linear extension of $q^{1/2}\mapsto q^{-1/2}$ and $T_w\mapsto T_{w^{-1}}^{-1}$ for every $w\in W$. By work of Kazhdan and Lusztig \cite{KaLu}, there is a unique basis $\{C'_w(q)\mid w\in W\}$ for $\cH$ which is invariant under this involution, and satisfies some additional properties which we won't explicitly need.

To obtain representations of $W$, we specialise this construction to $q\mapsto 1$. Observe that there is a ring isomorphism 
\[\cH(W;1)\xrightarrow{\sim}\Z[W]\]
sending $T_w$ to $w$. We let $C_w$ denote the image of $C'_w(1)$ under this isomorphism, and write $\KL=\{C_w\mid w\in W\}$ for the induced basis of the regular representation $\CC[W]=\CC\otimes_\ZZ\ZZ[W]$, which we call the Kazhdan--Lusztig (KL) basis.

The construction of a fixed basis for $\CC[W]$ gives a natural filtration of subrepresentations. We generalise this to determining a filtration of $\Res_{J}\CC[W]:=\Res_{W_{J}}\CC[W]$ for every $J\subseteq I$, whose subquotients can be given a KL basis in a natural way. When $W_{J}$ is finite, we will see that the longest element $w_{J}\in W_{J}$ acts by bijection (up to sign) on the KL basis of these subquotients.

For $x,y\in W$ and $i\in I$, write $x\xleftarrow{i}y$ if $C_x$ appears with nonzero coefficient in the expansion of $s_i\cdot C_y$ with respect to the KL basis.

\begin{Definition}\label{def:KL-order}Fix $J\subseteq I$. For $x,y\in W$, set $x\leq_{J}^\rL y$ if and only if there exists a chain
\[x=x_0\xleftarrow{i_1}x_1\xleftarrow{i_2}x_2\xleftarrow{i_3}\cdots\xleftarrow{i_r}x_r=y\]
for $x_0,x_1,\dots,x_r\in W$ and $i_1,\dots,i_r\in J$. Write $\sim_{J}^L$ for the equivalence relation induced by this preorder.
\end{Definition}

Note that $C_x$ appears with nonzero coefficient in $w\cdot C_x$ for some $w\in W_{J}$ only if $x\leq_{J}^\rL y$, and that $\leq_{J'}^\rL$ implies $\leq_{J}^\rL$ whenever $J'\subseteq J\subseteq I$. Let $\cC$ be an equivalence class of $W$ under $\leq_{J}^\rL$, which we call a $J$-relative left cell. For $x\in W$, write $x\leq_{J}^\rL\cC$ if $x\leq_{J}^\rL y$ for some (equivalently, for all) $y\in\cC$. Define the subspace
\[\cC^\leq:=\Span_\CC(\{C_x\mid x\leq_{J}^\rL\cC\})\subseteq\CC[W]\]
which is a subrepresentation of $\Res_{J}\CC[W]$. We define $x<_{J}^\rL\cC$ and $\cC^<$ similarly. Observe that $\cC^<$ is a $\Res_{J}\CC[W]$-subrepresentation of $\cC^\leq$, and so we can define the quotient representation
\[V(\cC):=\cC^\leq/\cC^<.\]
This is a $W_{J}$-representation, and we call such representations $J$-relative cell modules of $W$. For $x\in\cC$, let $[C_x]_{J}$ be the image of the KL basis element $C_x\in\cC^\leq$ under the natural projection map $\cC^\leq\twoheadrightarrow V(\cC)$. Observe that $\KL_{\cC}:=\{[C_x]_{J}\mid x\in\cC\}$ is a basis for $V(\cC)$, which we call the KL basis.

When $J=I$, we write $\leq^\rL$ (respectively $\sim^\rL$) instead of $\leq_I^\rL$ (respectively $\sim_I^\rL$), and $[C_x]$ instead of $[C_x]_I$. We often drop the prefix `$I$-relative' for cells and modules and refer simply to left cells and cell modules of $W$.

\begin{Remark}When $W=S_n$ is the symmetric group, the cell modules are irreducible (see Section \ref{sect:specht-cell}). However, cell modules of general Coxeter groups will be neither irreducible nor finite-dimensional in general.\end{Remark}

\subsection{Actions on cell modules} 
Let $(W,I)$ be an arbitrary Coxeter system. Our goal is to show that for any $J\subseteq I$ with $W_{J}$ finite, the element $w_{J}$ acts on $\CC[W]$ by an involution on the KL basis. 

We begin with the action of the longest element on cell modules for finite Coxeter groups, which forms the basis for the remainder of our calculations.

\begin{Proposition}[Theorem 3.1 in \cite{M}]\label{prop:long-element-KL}
Suppose $W$ is finite and $\cC$ is a left cell of $W$. Then there is an involution $\psi_\cC:\cC\to\cC$ such that the longest element $w_I\in W$ acts on the KL basis of $V(\cC)$ by
\[w_I\cdot [C_x]=\pm [C_{\psi_{\cC}(x)}]\]
for every $x\in\cC$, where the sign is constant.
\end{Proposition}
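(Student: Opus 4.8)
The plan is to lift the statement to the Hecke algebra $\cH=\cH(W;q)$ over $\ZZ[q^{\pm 1/2}]$, prove a $q$-analogue there, and specialise $q\mapsto 1$ (under which $T_{w_0}$ becomes the group element $w_0$ and $C'_w(q)$ becomes $C_w$). For a left cell $\cC$ let $\cH^{\leq\cC}$ and $\cH^{<\cC}$ be the left ideals spanned by the Kazhdan--Lusztig basis elements $C'_z$ with $z\leq^\rL\cC$, respectively $z<^\rL\cC$, so the Hecke cell module is $V(\cC)_\cH=\cH^{\leq\cC}/\cH^{<\cC}$. The goal becomes: left multiplication by $T_{w_0}$ descends to an operator on $V(\cC)_\cH$ sending each $C'_x$ to $\pm q^{m}C'_{\psi_\cC(x)}$ with sign constant on $\cC$, for an involution $\psi_\cC$ of $\cC$ and some $m\in\tfrac12\ZZ$ depending on $x$.

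First I would use that conjugation by $T_{w_0}$ realises the $w_0$-twisted diagram automorphism: $T_{w_0}\,h\,T_{w_0}^{-1}=\gamma(h)$, where $\gamma$ is the $\ZZ[q^{\pm 1/2}]$-algebra automorphism of $\cH$ with $\gamma(T_s)=T_{w_0sw_0}$; this is a short braid-relation computation. Since $\gamma$ commutes with the bar involution and preserves the defining properties of the Kazhdan--Lusztig basis, $\gamma(C'_x)=C'_{w_0xw_0}$, and since $\gamma$ preserves the cell preorder it permutes left cells. Hence $T_{w_0}C'_x=\gamma(C'_x)\,T_{w_0}=C'_{w_0xw_0}\,T_{w_0}$, which reduces the problem to understanding \emph{right} multiplication by $T_{w_0}$ on the Kazhdan--Lusztig basis.

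The technical core is to show that, working modulo lower cells, $C'_y\,T_{w_0}=\pm q^{m}\,C'_{\rho(y)}$ for an explicit involution $\rho$ (the one induced by $w_0$ through a bijection between a left cell and a right cell). I would prove this by telescoping the right-handed Kazhdan--Lusztig multiplication rule ($C'_yT_s=(q^{1/2}+q^{-1/2})C'_y$ if $ys<y$, and $C'_yT_s=C'_{ys}+\sum_z\mu(z,y)C'_z$ otherwise, via $T_s=q^{1/2}C'_s-1$) along a reduced word for $w_0$, and controlling the resulting sum with the $w_0$-symmetry of Kazhdan--Lusztig polynomials $P_{y,z}=P_{w_0z,w_0y}$ together with the matching symmetry of the inverse Kazhdan--Lusztig polynomials; these are exactly what force every term other than the single ``leading'' term $C'_{\rho(y)}$ to land in $\cH^{<\cC}$. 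Composing with $\gamma$ presents $w_0\cdot(-)$ on $V(\cC)_\cH$ as a monomial times a signed permutation of the Kazhdan--Lusztig basis, with underlying permutation $\psi_\cC\colon x\mapsto\rho(w_0xw_0)$; it is an involution because $w_0^2=1$, and it preserves $\cC$ because $w_0\cdot(-)$ preserves $V(\cC)$.

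Finally I would check that the sign is constant on $\cC$ and that $q\mapsto 1$ removes $q^m$. For constancy, use that any two elements of a left cell are connected by a chain of relations $x\xleftarrow{s}y$, and that $w_0\cdot(-)$ intertwines multiplication by $T_s$ with multiplication by $T_{w_0sw_0}$ (by $\gamma$); propagating the sign along such a chain shows it is locally constant, hence constant. Specialising $q\mapsto 1$ then gives $w_0\cdot[C_x]=\pm[C_{\psi_\cC(x)}]$ with sign independent of $x\in\cC$. I expect the main obstacle to be the technical core: proving that right multiplication by $T_{w_0}$ leaves exactly one Kazhdan--Lusztig basis vector surviving after passage to the cell quotient. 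This is precisely where the $w_0$-symmetries of the (inverse) Kazhdan--Lusztig polynomials must be used in a serious way, and it is the heart of Mathas's argument \cite{M}.
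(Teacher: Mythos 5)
There is a genuine gap, and it is worth noting first that the paper does not prove Proposition \ref{prop:long-element-KL} at all: it is quoted as Theorem 3.1 of \cite{M}, and the Remark immediately following it records the one extra input needed to go beyond Weyl groups, namely that Mathas's argument relies on positivity of Kazhdan--Lusztig data, now known in the equal-parameter case by \cite{positivity}. So a ``blind proof'' here would have to reproduce the substance of Mathas's theorem, and your proposal does not. Your preliminary reductions are fine: conjugation by $T_{w_0}$ does realise the automorphism $\gamma$ with $\gamma(T_s)=T_{w_0sw_0}$, $\gamma(C'_x)=C'_{w_0xw_0}$, and hence $T_{w_0}C'_x=C'_{w_0xw_0}T_{w_0}$, reducing the problem to right multiplication by $T_{w_0}$. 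But the step you yourself call the ``technical core'' --- that modulo the cell quotient $C'_yT_{w_0}$ has a single surviving term $\pm q^mC'_{\rho(y)}$ with $\rho$ an involution --- is exactly the content of the theorem, and you only gesture at it (``telescoping'' the multiplication rule along a reduced word for $w_0$ and invoking $P_{y,z}=P_{w_0z,w_0y}$ and the inverse-polynomial symmetry). Those symmetries alone do not force the cancellation pattern you need: when you expand along a reduced word the intermediate products generate many terms with $\mu$-coefficients, and without positivity of the relevant coefficients (or the machinery of Lusztig's $a$-function and his properties of cells, which is what Mathas actually uses) nothing prevents the putative leading term from cancelling or additional terms from surviving in the quotient. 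This is precisely why the statement was conditional on the positivity conjecture in \cite{M}, and your sketch never mentions positivity at all.

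A second, related gap: after writing $T_{w_0}C'_x=C'_{w_0xw_0}T_{w_0}$, the terms produced by right multiplication are controlled by the \emph{right} preorder ($z\leq^{\mathrm R} w_0xw_0$), whereas the cell module $V(\cC)$ is a quotient with respect to the \emph{left} preorder $\leq^\rL$. To conclude that all non-leading terms die in $V(\cC)$ you must compare the two filtrations, e.g.\ via two-sided cells and the fact that $z\leq^{\mathrm R}y$ together with $z$ and $y$ lying in the same two-sided cell forces $z\sim^{\mathrm R}y$; this again is part of the $a$-function/positivity package, not a formal consequence of the multiplication rule. The sign-constancy and $q\mapsto1$ endgame are plausible but moot until the core step is established. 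In short, the proposal correctly frames the reduction but defers the essential difficulty to ``the heart of Mathas's argument,'' i.e.\ it does not constitute a proof of the proposition, which the paper itself handles by citation to \cite{M} together with \cite{positivity}.
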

\begin{Remark}
In \emph{\cite{M}}, Mathas states this result only for Weyl groups, since the general case relies on the so-called `positivity conjecture' to hold in each Hecke algebra. However, this conjecture has since been proven in the case of equal parameters \emph{\cite{positivity}}.
\end{Remark}

To extend this to parabolic subgroups, we need to compare the $J$-relative left cells of $W$ to the left cells of $W_J$, which is possible due to a number of observations of Roichman \cite{roichman}.

Fix $(W,I)$ an arbitrary Coxeter system and $J\subseteq I$ a subdiagram. Every $x\in W$ has a unique decomposition $x=ua$, where $u\in W_{J}$ and $a$ is the minimal-length element of $W_{J}x\subseteq W$. We define special notation for this decomposition by setting $\Res_{J}(x):=u$ and $\pi_{J}(x):=a$.

\begin{Lemma}[Theorem 5.2 in \cite{roichman}]\label{lem:KL-restriction}
Fix $w\in W_J$, $\cC$ a $J$-relative left cell and $x,y\in \cC$. Then $[C_x]_{J}\in V(\cC)$ appears in $w\cdot [C_y]_{J}$ with nonzero coefficient only if $\pi_{J}(x)=\pi_{J}(y)$. If this is the case, the coefficient with which it appears matches the coefficient of $C_{\res_{J}(x)}\in\CC[W_{J}]$ in the action of $w\cdot C_{\res_{J}(y)}$.
\end{Lemma}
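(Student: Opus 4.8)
This is \cite[Theorem~5.2]{roichman}; I indicate the argument I would follow, organised around the right-coset decomposition. For $x\in W$ write $x=\Res_J(x)\,\pi_J(x)$ with $\Res_J(x)\in W_J$ and $\pi_J(x)$ the minimal-length element of $W_J x$; then $\ell(va)=\ell(v)+\ell(a)$ for all $v\in W_J$ and $a=\pi_J(x)$, so in the standard basis $\cH(W_J)T_a\subseteq\cH(W)$ is a free left $\cH(W_J)$-module of rank one on which left multiplication mirrors the regular representation. The lemma says this persists after replacing $\{T_w\}$ by the Kazhdan--Lusztig basis and passing to the cell module. I would first reduce to $w=s_i$ with $i\in J$: the general case follows by induction on $\ell(w)$, because $V(\cC)=\cC^\leq/\cC^<$ is spanned by the classes $[C_x]_J$ and, by Definition~\ref{def:KL-order}, applying $s_i$ ($i\in J$) to $[C_y]_J$ with $y\in\cC$ produces only classes $[C_z]_J$ with $z\leq_J^\rL y$, which lie in $\cC$ or in $\cC^<$; the latter vanish in $V(\cC)$.

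The heart of the proof is the following multiplication formula in $\CC[W]$: for $i\in J$ and $y\in W$, with $u=\Res_J(y)$ and $a=\pi_J(y)$,
\[
s_i\cdot C_y \;=\; \sum_{v\in W_J} c^{(i)}_{v,u}\,C_{va},
\qquad\text{where}\quad s_i\cdot C_u=\sum_{v\in W_J}c^{(i)}_{v,u}\,C_v\ \text{ in }\CC[W_J].
\]
Working at generic $q$ and specialising $q\mapsto1$ at the end, this rests on the Deodhar-type identity $P_{va,v'a}=P_{v,v'}$ for $v,v'\in W_J$ (left side computed in $W$, right side in $W_J$; see \cite{BB}) together with the matching of $\mu$-values, which force $C'_{s_i}C'_{va}$ to lie in the $\ZZ[q^{\pm1/2}]$-span of $\{C'_{v'a}:v'\in W_J\}$ with precisely the structure constants of $C'_{s_i}C'_v$ in $\cH(W_J)$. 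Equivalently, $C'_v\mapsto C'_{va}$ extends to an isomorphism of left $\cH(W_J)$-modules from $\cH(W_J)$ onto the span of $\{C'_w:\pi_J(w)=a\}$. This block isomorphism---false in the standard basis, and true only once the cancellations built into the $C'$-basis are accounted for---is the technical crux, and is exactly the kind of parabolic compatibility established by Geck \cite{G} and Roichman \cite{roichman}; I expect it to be the main obstacle, everything else being bookkeeping.

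Granting the formula, the lemma follows. For $x,y\in\cC$ the coefficient of $[C_x]_J$ in $s_i\cdot[C_y]_J$ equals the coefficient of $C_x$ in $s_i\cdot C_y$ in $\CC[W]$ (passing to $V(\cC)$ only deletes $\cC^<$-terms), which by the formula is $0$ unless $\pi_J(x)=\pi_J(y)$, and equals $c^{(i)}_{\Res_J(x),\Res_J(y)}$---the coefficient of $C_{\Res_J(x)}$ in $s_i\cdot C_{\Res_J(y)}$ in $\CC[W_J]$---when $\pi_J(x)=\pi_J(y)$. The inductive step for general $w\in W_J$ then matches the $\CC[W]$- and $\CC[W_J]$-computations term by term, using in addition Roichman's comparison of cells: inside a fixed $\pi_J$-fibre the $J$-relative left cells of $W$ correspond via $\Res_J$ to the genuine left cells of $W_J$, and $\leq_J^\rL$ pulls back to $\leq^\rL$ on $W_J$, so that the intermediate classes appearing in $\CC[W]$ stay inside $\cC$ and correspond exactly to the full sum over $W_J$.
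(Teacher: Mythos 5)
The paper offers no proof of this lemma --- it is quoted directly as Theorem 5.2 of Roichman's paper --- so the only thing to assess is whether your sketch would actually prove it, and its central step fails. You claim the exact identity $s_i\cdot C_y=\sum_{v\in W_J}c^{(i)}_{v,u}C_{va}$ in $\CC[W]$, equivalently that $C'_v\mapsto C'_{va}$ is an isomorphism of left $\cH(W_J)$-modules onto the span of $\{C'_w\mid \pi_J(w)=a\}$. This is false. Take $W=S_3$, $J=\{1\}$, $y=s_2s_1$, so $a=\pi_J(y)=s_2s_1$ and $\Res_J(y)=e$, and the coset is $W_Jy=\{s_2s_1,\,s_1s_2s_1\}$. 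Then $C'_{s_1}C'_{s_2s_1}=C'_{s_1s_2s_1}+C'_{s_1}$, i.e.\ at $q=1$, $s_1\cdot C_{s_2s_1}=C_{s_1s_2s_1}-C_{s_2s_1}+C_{s_1}$, whereas your formula predicts only $C_{s_1s_2s_1}-C_{s_2s_1}$. The extra term $C_{s_1}$ lies outside the coset, so the span of $\{C'_{va}\mid v\in W_J\}$ is not even an $\cH(W_J)$-submodule. The source of the error is the Kazhdan--Lusztig multiplication rule: $C'_{s_i}C'_y$ picks up $\mu(z,y)C'_z$ for \emph{all} $z<y$ with $s_iz<z$, and the parabolic factorisation $P_{v'a,va}=P_{v',v}$ (which is indeed a correct and known fact, and does give descent- and $\mu$-matching \emph{within} the coset) says nothing about the cross-coset $\mu$-terms, which are generically nonzero.

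Consequently your argument proves at best a block \emph{upper-triangular} statement: modulo KL elements in other cosets, the action matches the $\CC[W_J]$-computation. The genuinely nontrivial content of Roichman's theorem --- and precisely the first assertion of the lemma --- is that those cross-coset terms can never lie in the same $J$-relative left cell as $y$, i.e.\ that if $z\sim_J^\rL y$ and $C_z$ occurs in $w\cdot C_y$ with $w\in W_J$ then $\pi_J(z)=\pi_J(y)$; in the example above this holds only because one checks separately that $s_1<_J^\rL s_2s_1$ strictly. Your sketch assumes this (by asserting the exact block formula) rather than proving it, so the reduction to simple reflections, the specialisation at $q=1$, and the coefficient matching in the second assertion are fine bookkeeping, but the key vanishing statement is left without an argument and cannot be obtained from the $P_{v'a,va}=P_{v',v}$ identity alone.
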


In particular, for every $J$-relative left cell $\cC$ of $W$, there is a left cell $\cC'$ of $W_J$ such that $\res_J(-):\cC\to\cC'$ is a bijection, and the map $[C_x]_J\mapsto [C_{\res_J(x)}]$ for every $x\in\cC$ linearly extends to an isomorphism $V(\cC)\to V(\cC')$ of $W_J$-represetations.

By this Lemma, we are able to deduce the action of $w_{J}$ on the KL basis of $\CC[W]$ up to lower-order terms. We first define the bijection that $w_J$ will act by. As usual, we define this bijection on $W$ and use the natural association of $x\in W$ with $C_x\in\KL$.

\begin{Definition}
Fix $J\subseteq I$ with $W_{J}$ finite. For $x\in W$, define $\psi_{J}:W\to W$ by
\[\psi_{J}(x):=\psi_{\cC}(\Res_{J}(x))\pi_{J}(x),\]
where $\cC$ is the left cell of $W_{J}$ containing $\Res_{J}(x)$ and $\psi_{\cC}$ is the involution from Proposition \ref{prop:long-element-KL}.
\end{Definition}

In particular, we note that $\psi_J$ is an involution which preserves $J$-relative left cells setwise, and hence permutes the KL basis elements of each $J$-relative cell module.

\begin{Proposition}\label{prop:long-element-KL2}
    For any $J\subseteq I$ with $W_{J}$ finite, the longest element $w_{J}\in W_{J}\leq W$ acts on $(\CC[W],\KL,\leq_{J}^\rL)$ by $\psi_{J}$ up to l.o.t.
\end{Proposition}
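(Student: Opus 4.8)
The plan is to reduce the statement to Mathas's theorem for the finite group $W_J$ (Proposition \ref{prop:long-element-KL}) via Roichman's comparison of relative cells (Lemma \ref{lem:KL-restriction}), using the filtration of $\Res_J\CC[W]$ by the subrepresentations $\cC^\leq$. First I would fix $x\in W$ and let $\cC$ be the $J$-relative left cell containing $x$, with $\cC'$ the left cell of $W_J$ for which $\Res_J(-)\colon\cC\to\cC'$ is a bijection. Since $\cC^\leq=\Span_\CC(\{C_z\mid z\leq_J^\rL\cC\})$ is a $\Res_J\CC[W]$-subrepresentation and $w_J\in W_J$, we have $w_J\cdot C_x\in\cC^\leq$, so $w_J\cdot C_x=\sum_{z\,\leq_J^\rL\,x}b_z C_z$ with $b_z\in\ZZ$ (the Kazhdan--Lusztig structure constants specialise to integers at $q=1$). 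Passing to the cell module $V(\cC)=\cC^\leq/\cC^<$ kills every term $C_z$ with $z$ in a cell strictly below $\cC$, leaving $w_J\cdot[C_x]_J=\sum_{z\in\cC}b_z[C_z]_J$ in $V(\cC)$.

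Next I would transport this equation along the $W_J$-module isomorphism $V(\cC)\xrightarrow{\ \sim\ }V(\cC')$, $[C_z]_J\mapsto[C_{\Res_J(z)}]$, and apply Proposition \ref{prop:long-element-KL} to the finite Coxeter group $W_J$ and its cell $\cC'$: this gives $w_J\cdot[C_{\Res_J(x)}]=\varepsilon\,[C_{\psi_{\cC'}(\Res_J(x))}]$ for a sign $\varepsilon=\pm1$ depending only on $\cC'$. Pulling back through the isomorphism, $w_J\cdot[C_x]_J=\varepsilon\,[C_{z_0}]_J$, where $z_0\in\cC$ is the unique element with $\Res_J(z_0)=\psi_{\cC'}(\Res_J(x))$. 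Since $\psi_J$ preserves $\cC$ setwise and $\Res_J(\psi_J(x))=\psi_{\cC'}(\Res_J(x))$ by the definition of $\psi_J$, we get $z_0=\psi_J(x)$. Hence $b_{\psi_J(x)}=\varepsilon$ while $b_z=0$ for all $z\in\cC\setminus\{\psi_J(x)\}$, and back in $\CC[W]$
\[
w_J\cdot C_x=\varepsilon\,C_{\psi_J(x)}+\sum_{z\,<_J^\rL\,x}b_z C_z .
\]

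To finish I would reindex the lower-order part into the shape demanded by Definition \ref{def:lot}. The set $\{z\mid z<_J^\rL x\}$ is a union of full $J$-relative left cells (those strictly below $\cC$), and $\psi_J$ is an involution carrying each such cell bijectively onto itself; so substituting $z=\psi_J(y)$ and putting $a_y:=b_{\psi_J(y)}\in\ZZ$ rewrites the display as $w_J\cdot C_x=\pm C_{\psi_J(x)}+\sum_{y<_J^\rL x}a_y C_{\psi_J(y)}$, with the leading sign $\varepsilon$ depending only on $\cC'$, hence only on the $\sim_J^\rL$-class $\cC$ of $x$. Under the identification $C_z\leftrightarrow z$ this is exactly the assertion that $w_J$ acts on $(\CC[W],\KL,\leq_J^\rL)$ by $\psi_J$ up to lower-order terms.

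There is no deep obstacle, since all the hard input is imported; the care lies in three bookkeeping points. The first is confirming that within $\cC$ only the single basis element $C_{\psi_J(x)}$ survives — a priori many $C_z$ with $z\in\cC$ could occur in $w_J\cdot C_x$, and it is precisely Lemma \ref{lem:KL-restriction} (equivalently, the $W_J$-equivariant isomorphism $V(\cC)\cong V(\cC')$) that forces the collapse. The second is that the leading sign is genuinely constant on all of $\cC$; this follows because Mathas's sign is constant on $\cC'$ and $\cC'$ is determined by $\cC$. The third is massaging the remaining terms into the precise form $\sum a_y C_{\psi_J(y)}$, which uses only that $\psi_J$ permutes each lower cell. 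So the proof is essentially the assembly of Proposition \ref{prop:long-element-KL} and Lemma \ref{lem:KL-restriction} along the cell filtration.
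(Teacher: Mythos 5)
Your proposal is correct and follows essentially the same route as the paper: reduce to Mathas's theorem (Proposition \ref{prop:long-element-KL}) for the finite group $W_J$ via Roichman's Lemma \ref{lem:KL-restriction} (you phrase this through the $W_J$-equivariant isomorphism $V(\cC)\cong V(\cC')$, which the paper derives from the same lemma), identify the surviving term in the $\sim_J^{\rL}$-class with $\psi_J(x)$, and reindex the lower-order terms using that $\psi_J$ permutes each lower cell. The sign-constancy argument also matches the paper's, so no changes are needed.
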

\begin{proof}
    Fix $x,y\in W$. Since $w_{J}\in W_{J}\leq W$, $C_x$ appears in $w_{J}\cdot C_y$ only if $x\leq_{J}^\rL y$. By the first part of Lemma \ref{lem:KL-restriction}, we have $x\sim_J^\rL y$ if and only if $\pi_{J}(x)=\pi_{J}(y)$ and $\res_{J}(x)\sim^\rL\res_J(y)$.

    Let $\cC$ be the left cell of $W_{J}$ containing $u:=\res_{J}(x)$. By Proposition \ref{prop:long-element-KL2}, $w_{J}$ acts on the KL basis element $[C_u]\in V(\cC)$ by
    \[w_{J}\cdot [C_u]=\pm [C_{\psi_{\cC}(u)}]\]
    with constant sign. Note that $\psi_{J}(x)=\psi_{\cC}(u)\pi_{J}(x)$ by definition. Hence, by applying the second part of Lemma \ref{lem:KL-restriction} we obtain that there exist $b_y\in\ZZ$ such that
    \[w_{J}\cdot C_x=\pm C_{\psi_{J}(x)}+\sum_{y<_{J}^\rL x}b_yC_y\]
    in $\CC[W]$. The sign only depends on $\pi_{J}(x)$ and the left cell of $W_J$ containing $\res_{J}(x)$, and hence is constant on equivalence classes of $\sim_{J}^\rL$. Finally, setting $a_y:=b_{\psi_{J}(y)}$ and noting that $y\sim_{J}^\rL\psi_{J}(y)$, we can reindex the sum to obtain
    \[w_{J}\cdot C_x=\pm C_{\psi_{J}(x)}+\sum_{y<_{J}^\rL x}a_yC_{\psi_{J}(y)}\]
    as required.
\end{proof}

We now extend this to separable elements. For a chain $\calZ=(I_1,\dots,I_r)$ with $I_1\supseteq\cdots\supseteq I_r$, we define $\leq_\calZ^\rL$ in analogy with Definition \ref{def:leq_Z}.

\begin{Definition}\label{D:gamma-ordering-KL}
If $r=1$, define $\leq_{\calZ}^\rL $ equivalently to $\leq_{I_1}^\rL $. Otherwise, for $x,y\in W$, set $x\leq_{\calZ}^\rL y$ if and only if $x<_{I_1}^\rL y$, or $x\sim_{I_1}^\rL y$ and $x\leq_{(I_2,\dots,I_r)}^\rL y$.
\end{Definition}

Since $\leq_{I_1}^\rL$ is implied by $\leq_{I_2}^\rL$ which is implied by $\leq_{I_3}^\rL$ and so on, we have that $x<_{\calZ}^\rL y$ for $x,y\in W$ whenever $x<_{I_a}^\rL y$ for any $1\leq a\leq r$. Similarly, if $\calZ':=(I_2,\dots,I_r)$, then $x<_{\calZ'}^\rL y$ implies $x<_{\calZ}^\rL y$ (note that the analogous statements do not hold for the crystal orderings).

For an arbitrary separable element $w\in W^\mathrm{sep}$, there is a chain $\calZ=(I_1,\dots,I_r)$ such that $W_{I_1}$ is finite and $w=w_{\calZ}:=w_{I_1}\cdots w_{I_r}$. Set $\psi_{\calZ}:=\psi_{I_1}\cdots\psi_{I_r}$, which is a bijection on $W$ preserving $I_1$-relative left cells setwise.

\begin{proof}[Proof of Theorem \ref{thm:lot-KL}]Define $\psi_\calZ$ and $\leq_\calZ^\rL$ on $\KL$ by identifying $x$ with $C_x$. We prove that $w_{\calZ}$ acts on $(\CC[W],\KL,\leq_\calZ^\rL)$ by $\psi_{\calZ}$ up to l.o.t. for any chain $\calZ=(I_1,\dots,I_r)$ with $W_{I_1}$ finite.

We prove this by induction on $r$. Proposition \ref{prop:long-element-KL2} gives the $r=1$ case. Now suppose $r>1$ and set $\calZ':=(I_2,\dots,I_r)$. By assumption $w_{\calZ'}$ acts on $\KL$ by
\[w_{\calZ'}\cdot C_x=\pm C_{\psi_{\calZ'}(x)}+\sum_{y<_{\calZ'}^\rL x}b_yC_{\psi_{\calZ'}(y)}\]
for every $x\in W$. Apply $w_{I_1}$ to both sides. On the left-hand side, we have $w_{\calZ}\cdot x$ by definition. Suppose $C_{\psi_\calZ(z)}=C_{\psi_{I_1}(\psi_{\calZ'}(z))}$ for $z\in W$ appears with nonzero coefficient after acting on the left-hand side by $w_{I_1}$. There are two cases to consider.

\begin{enumerate}
    \item $C_{\psi_\calZ(z)}$ appears in $w_{I_1}\cdot C_{\psi_{\calZ'}(x)}$ with nonzero coefficient. Then either $z=x$ (in which case $C_{\psi_{\calZ}(z)}$ appears with coefficient $\pm 1$) or $\psi_{\calZ'}(z)<_{I_1}^\rL \psi_{\calZ'}(x)$. In the latter case, we have $z<_{I_1}^\rL x$ and so $z<_{\calZ}^\rL x$.
    \item $C_{\psi_\calZ(z)}$ appears in $w_{I_1}\cdot C_{\psi_{\calZ'}(y)}$ with nonzero coefficient for some $y<_{\calZ'}^\rL x$ (that is, $y<_{\calZ}^\rL x$). Using the same argument as in (1), we observe that $z=y$ or $z<_{I_1}^\rL y$. In either case, we have $z\leq_{\calZ}^\rL y<_{\calZ}^\rL x$ as required.
\end{enumerate}

Finally, the coefficient of each $C_{\psi_\calZ(z)}$ is an integer by the construction of the KL basis, and the sign with which $C_{\psi_\calZ(x)}$ appears depends only on the $\sim_{I_1}^\rL$ and $\sim_{\calZ'}^\rL$ equivalence classes of $x$ which are uniquely determined by the $\sim_{\calZ}^\rL$ equivalence class.
\end{proof}

\begin{Remark}\label{rmk:cactus-kl}
    From this one can deduce that the cactus group of $W$ (defined for general Coxeter groups as generated by $c_J$ for $J\subseteq I$ connected and $W_J$ finite, with relations in Remark \ref{rmk:cactus}) acts on $W$ by $c_J\cdot x=\psi_J(x)$, recovering a result in \emph{\cite{Bonnafe,losev}}.
\end{Remark}

We extend this theorem to $J$-relative cell modules of $W$. For this result to make sense, $J$ must be chosen large enough so that $W_J$ contains $w_{\calZ}$.

\begin{Corollary}\label{cor:kl-action}
Fix $I\supseteq J\supseteq I_1\supseteq\cdots\supseteq I_r$ with $W_{I_1}$ is finite and set $\calZ:=(I_1,\dots,I_r)$. Let $\cC$ be a $J$-relative left cell. Then $w_\calZ$ acts by $\psi_{\calZ}$ on $(V(\cC),\KL_{\cC},\leq_{\calZ}^{\rL})$ up to lower-order terms.
\end{Corollary}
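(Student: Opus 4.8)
The plan is to obtain this by restricting the statement of Theorem~\ref{thm:lot-KL} to the cell module $V(\cC)=\cC^\leq/\cC^<$. Since $w_\calZ\in W_{I_1}\leq W_J$, and since $C_x$ occurs with nonzero coefficient in $w\cdot C_y$ (for $w\in W_J$) only when $x\leq_J^\rL y$, both $\cC^\leq$ and $\cC^<$ are $W_J$-subrepresentations of $\Res_J\CC[W]$; hence $w_\calZ$ induces an operator on $V(\cC)$, for which $\KL_\cC=\{[C_x]_J\mid x\in\cC\}$ is a basis. The bijection $\psi_\calZ$ preserves $I_1$-relative left cells setwise (as recorded before the proof of Theorem~\ref{thm:lot-KL}), and every $I_1$-relative left cell lies inside a single $J$-relative left cell because $\leq_{I_1}^\rL$ implies $\leq_J^\rL$; therefore $\psi_\calZ$ restricts to a bijection of $\cC$, and via $x\leftrightarrow[C_x]_J$ it defines a bijection of $\KL_\cC$, while $\leq_\calZ^\rL$ restricts to a preorder on $\KL_\cC$ whose equivalence classes are exactly the $\sim_\calZ^\rL$-classes contained in $\cC$.

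Next I would apply Theorem~\ref{thm:lot-KL}: for $x\in\cC$ there are $a_y\in\ZZ$ with
\[w_\calZ\cdot C_x=\pm C_{\psi_\calZ(x)}+\sum_{y<_\calZ^\rL x}a_y\,C_{\psi_\calZ(y)},\]
the sign depending only on the $\sim_\calZ^\rL$-class of $x$. Each $y$ appearing satisfies $y\leq_{I_1}^\rL x$, hence $y\leq_J^\rL x$ and $C_y\in\cC^\leq$, and $\psi_\calZ(y)\sim_{I_1}^\rL y$ gives $C_{\psi_\calZ(y)}\in\cC^\leq$ as well, so the whole identity lies in $\cC^\leq$. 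Now apply the projection $\cC^\leq\twoheadrightarrow V(\cC)$, which sends $C_z\mapsto[C_z]_J$ when $z\in\cC$ and $C_z\mapsto0$ when $z<_J^\rL\cC$. The leading term survives since $\psi_\calZ(x)\in\cC$; a term $a_yC_{\psi_\calZ(y)}$ survives exactly when $y\in\cC$, because if $y\notin\cC$ then $y<_J^\rL\cC$, whence $\psi_\calZ(y)\sim_{I_1}^\rL y$ forces $\psi_\calZ(y)<_J^\rL\cC$ and the image is zero. We are left with
\[w_\calZ\cdot[C_x]_J=\pm[C_{\psi_\calZ(x)}]_J+\sum_{\substack{y\in\cC\\ y<_\calZ^\rL x}}a_y\,[C_{\psi_\calZ(y)}]_J,\]
which is precisely the form demanded by Definition~\ref{def:lot} for the triple $(V(\cC),\KL_\cC,\leq_\calZ^\rL)$ with bijection $\psi_\calZ$, and the constancy of the sign on $\sim_\calZ^\rL$-equivalence classes is inherited directly from Theorem~\ref{thm:lot-KL}.

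I do not expect a genuine obstacle here: the argument is pure bookkeeping once Theorem~\ref{thm:lot-KL} is in hand. The one point requiring care is verifying that $\psi_\calZ$ carries $\cC$ onto itself and that precisely the ``$y\in\cC$'' terms of the $\CC[W]$-expansion survive the passage to the quotient, so that both the resulting bijection of $\KL_\cC$ and the sign conventions descend cleanly to $V(\cC)$.
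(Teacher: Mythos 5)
Your proposal is correct and follows essentially the same route as the paper's own proof: apply Theorem~\ref{thm:lot-KL}, observe that $\psi_\calZ$ preserves $I_1$-relative (hence $J$-relative) left cells so the expansion lives in $\cC^\leq$, and then project to $V(\cC)$, where exactly the terms indexed by $y\in\cC$ survive and the sign constancy descends. The extra bookkeeping you supply (that $y<_\calZ^\rL x$ forces $y\leq_{I_1}^\rL y\leq_J^\rL x$, and that $\psi_\calZ(y)<_J^\rL\cC$ when $y\notin\cC$) is exactly what the paper's shorter phrasing implicitly relies on.
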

\begin{proof}
    We define $\psi_\calZ$ and $\leq_{\calZ}^\rL$ on $\KL_\cC$ under the association of $x\in\cC$ with $[C_x]_J\in\KL_\cC$. 
    
    By the proof of Theorem \ref{thm:lot-KL}, for every $x\in \cC$ we have
    \[w_\calZ\cdot C_x=\pm C_{\psi_\calZ}(x)+\sum_{y<_\calZ^\rL x}a_yC_{\psi_\calZ(y)}\]
    with $a_y\in\Z$ and a sign constant on equivalence classes under $\sim_\calZ^\rL$. Since $\psi_\calZ(y)$ preserves $\sim_{I_1}^\rL$ equivalence classes and hence $\sim_J^\rL$ equivalence classes, the above formula is also valid in the subrepresentation $\cC^\leq\subseteq\Res_J\CC[W]$. Applying the projection map $\cC^\leq\twoheadrightarrow V(\cC)$ we obtain
    \[w_\calZ\cdot [C_x]_{J}=\pm [C_{\psi_\calZ(x)}]_{J}+\sum_{y<_\calZ^\rL x}a_y[C_{\psi_\calZ(y)}]_{J}\]
    for every $x\in\cC$. Since $\psi_\calZ$ preserves $\cC$ setwise, the only nonzero elements in the above sum are of the form $[C_{\psi_{\calZ}(y)}]_J$ for $y\in\cC$. In particular, $[C_{\psi_\calZ(x)}]_{J}=\psi_\calZ([C_x]_J)$.
\end{proof}

We consider this result in the case $W=S_n$ and $J=I$. By the construction given in Section \ref{sect:specht-cell}, every cell module $V(\cC)$ is isomorphic to $S^\lambda$ for some $\lambda\vdash n$, and $\KL_\cC$ corresponds to $\KL_\lambda=\{C_T\mid T\in\SYT(\lambda)\}$. Comparing Corollary \ref{cor:kl-action} with Theorem \ref{prop:specht-action-2} and applying Lemma \ref{lem:lot-unique}, we observe that
\[\psi_{\calZ}(x)\mapsto\left(\ev_{\calZ}(P(x)),Q(x))\right)\]
under the RSK correspondence for every $x\in S_n$ and every chain $\calZ$ of $\mathrm{A}_{n-1}$. This recovers a combinatorial description of the cactus group action described in Remark \ref{rmk:cactus-kl} for type A, which can be found in \cite{cpp, white}.

\providecommand{\bysame}{\leavevmode\hbox to3em{\hrulefill}\thinspace}
\providecommand{\MR}{\relax\ifhmode\unskip\space\fi MR }
\providecommand{\MRhref}[2]{%
  \href{http://www.ams.org/mathscinet-getitem?mr=#1}{#2}
}
\providecommand{\href}[2]{#2}

\end{document}